\newtheorem{theorem}{Theorem}
\newtheorem{lemma}[theorem]{Lemma}
\newtheorem{claim}[theorem]{Claim}
\newtheorem{corollary}[theorem]{Corollary}
\newtheorem{setup}[theorem]{Setup}
\newtheorem{definition}[theorem]{Definition}
\theoremstyle{remark}
\newcommand{\oldqed}{}
\newcommand{\qedClaim}{\hfill\scalebox{.6}{$\Box$}}
\newenvironment{claimproof}[1][Proof]{
  \renewcommand{\oldqed}{\qedsymbol}
  \renewcommand{\qedsymbol}{\qedClaim}
  \begin{proof}[#1]
}{
  \end{proof}
  \renewcommand{\qedsymbol}{\oldqed}
} 
\setlist{itemsep=2pt,parsep=1pt,topsep=3pt,partopsep=0pt}  
\def\itm#1{\rm ({#1})} 
\def\itmit#1{\itm{\it #1\,}} 
\def\abc{\itmit{\alph{*}}}
\def\itmarab#1{\mbox{\itm{{\it #1\,}\arabic{*}\hspace{.05em}}}}
\DeclareMathOperator{\im}{im}
\DeclareMathOperator{\dist}{dist}
\DeclareMathOperator{\Ext}{Ext}
\newcommand{\ExpExt}{\mathbb{E}}
\newcommand{\lneigh}{N^{-}}
\newcommand{\ldeg}{\deg^{-}}
\newcommand{\ldist}{\dist^{-}}
\newcommand{\eps}{\varepsilon}
\renewcommand{\rho}{\varrho}
\renewcommand{\subset}{\subseteq}
\newcommand{\cG}{\mathcal{G}}
\newcommand{\cP}{\mathcal{P}}
\newcommand{\Exp}{\mathbb{E}}
\newcommand{\Prob}{\mathbb{P}}
\newcommand{\nats}{\mathbb{N}}
\newcommand{\reals}{\mathbb{R}}
  \newcommand{\indW}[2]{$\text{\itmit{ind1}}_{#1,#2}$}
  \newcommand{\indE}[2]{$\text{\itmit{ind2}}_{#1,#2}$}
\title[Partition universality for graphs of bounded degeneracy and degree]{Partition universality for graphs of bounded degeneracy and degree}
\author[P. Allen]{Peter Allen}
\address{(PA) London School of Economics, Department of Mathematics, Houghton Street, London WC2A 2AE, UK}
\email{p.d.allen@lse.ac.uk}
\author[J. B\"ottcher]{Julia B\"ottcher}
\address{(JB) London School of Economics, Department of Mathematics, Houghton Street, London WC2A 2AE, UK}
\email{j.boettcher@lse.ac.uk}
\date{\today}
\begin{document}

\begin{abstract}
 We prove asymptotically optimal bounds on the number of edges a graph $G$ must
 have in order that any $r$-colouring of $E(G)$ has a colour class which
 contains every $D$-degenerate graph on $n$ vertices with bounded maximum
 degree. We also improve the upper bounds on the number of edges $G$ must have
 in order that any $r$-colouring of $E(G)$ has a colour class which contains
 every $n$-vertex graph with maximum degree $\Delta$, for each $\Delta\ge 4$. In
 both cases, we show that a binomial random graph with $Cn$ vertices and a
 suitable edge probability is likely to provide the desired $G$.
\end{abstract}

\maketitle

\section{Introduction}

Ramsey theory, an important and lively branch of combinatorics, concerns the
appearance of structure in large and potentially unstructured objects.
It was initiated by Ramsey's theorem, which implies that for every graph~$F$ and
every number~$r$, if~$N$ is large enough then any $r$-colouring of the edges of
the complete graph~$K_N$ on~$N$ vertices admits a monochromatic copy of~$F$.
Two natural question then are how big~$N$ needs to be for a given~$F$, and if
the complete graph~$K_N$ can be replaced by a sparser graph. Erd\H{o}s, Faudree,
Rousseau and Schelp~\cite{EFRS} combined both questions by defining the
following parameter and encouraging its study.

Given a graph $F$, the \emph{size-Ramsey number} $\hat{r}_r(F)$ is defined to be
the minimum number of edges in a graph $G$ such that any $r$-colouring of the
edges $E(G)$ of~$G$ admits a monochromatic copy of $F$.  In this paper we shall
concentrate on graphs~$F$ with bounded maximum degree (it is easy to come up
with examples of~$F$ without bounded maximum degree with very large size-Ramsey
number).

From Ramsey's theorem~\cite{Ramsey}, it follows that the size-Ramsey number
always exists. By a result of Chv\'atal, R\"odl, Szemer\'edi and
Trotter~\cite{CRST}, for any fixed $\Delta$, the Ramsey number in $r$ colours of
any $n$-vertex $F$ with maximum degree at most $\Delta$ is $O(n)$, and so an
upper bound for these graphs is $\hat{r}_r(F)=O(n^2)$. A trivial lower bound on
the other hand is $\hat{r}_r(F)\ge e(F)$, where the number of edges $e(F)$
of~$F$ is at most~$\delta n$. There has been considerable interest in improving
these two bounds in recent years. We now give a brief overview of progress in
this direction.

Much research focused on determining graph classes for which the trivial lower
bound gives the right order of magnitude for the size-Ramsey number, that is,
graph classes whose graphs have size-Ramsey number linear in the number of their
vertices. This was first established for paths in the 1980s by
Beck~\cite{Beck}, answering a question of Erd\H{o}s. For trees this follows from
a famous result of Friedman and Pippenger~\cite{FriPip}, and for cycles this was
proved by Haxell, Kohayakawa and {\L}uczak~\cite{HaxKohLuc}. Much more recently,
linear bounds were obtained for powers of paths by Clemens, Jenssen,
Kohayakawa, Morrison, Mota, Reding, and Roberts~\cite{CJKMMRR} (see
also~\cite{HJKMR}) and for graphs of bounded maximum degree and bounded
treewidth by Berger, Kohayakawa, Maesaka, Martins, Mendon\c{c}a, Mota and
Parczyk~\cite{BKMMMMP} and by Kam\v{c}ev, Liebenau, Wood and
Yepremyan~\cite{KLWY}. Another class of graphs that possibly have linear size
Ramsey number are $\sqrt{n}\times\sqrt{n}$-grid graphs, though here currently
the best known bound is $O(n^{5/4})$, which was established by Conlon, Nenadov,
and Truji\'c~\cite{ConNenTru} and improves on bounds by Clemens, Miralaei,
Reding, Schacht, and Taraz~\cite{CMRST}.

Returning from special graph classes to the family of all graphs with bounded
maximum degree, what is known in more generality about improvements to the easy
bounds mentioned above?  R\"odl and Szemer\'edi~\cite{RS} showed that not all
such graphs have linear size-Ramsey number by constructed graphs $F$ of maximum
degree three on $n$ vertices with $\hat{r}_2(F)=\Omega\big( n(\log
n)^{1/60}\big)$. Only very recently this was improved by
Tikhomirov~\cite{Tikho}, who gave a construction which improves the bound to
$\Omega\big(n\exp(c\sqrt{\log n})\big)$ for some constant~$c$.

Improving the simple upper bound, on the other hand,
Kohayakawa, R\"odl, Schacht and Szemer\'edi~\cite{KRSS} proved that
$\hat{r}_r(F)=O\big(n^{2-1/\Delta}(\log n)^{1/\Delta}\big)$ for any $F$ with $n$
vertices and maximum degree $\Delta$.
Prior to this paper, this remains the best general bound. Improvements were
obtained in some special cases though. For graphs~$F$ which are
$K_{\Delta+1}$-free Nenadov~\cite{CN} showed that
$\hat{r}_r(F)=\tilde{O}\big(n^{2-1/(\Delta-0.5)}\big)$. For $\Delta=3$, the
general upper bound on $\hat{r}_r(F)$ was improved, first by Conlon, Nenadov and
Truji\'c~\cite{CNT} to $O\big(n^{8/5}\big)$, and then very recently by
Dragani\'c and Petrova~\cite{DP} to $O\big(n^{3/2+o(1)}\big)$.

In many of the above positive results, something stronger is actually proved,
namely so-called partition universality. A graph is called \emph{universal} for
a class~$\cG$ of graphs if it contains copies of all graphs in~$\cG$, and
\emph{$r$-partition universal} for $\cG$ if whenever its edges are coloured with
$r$ colours, there is a colour class which contains copies of all graphs
in~$\cG$.  Let $\mathcal{G}(\Delta,n)$ denote the class of all graphs on~$n$
vertices with maximum degree at most~$\Delta$.  The above-mentioned upper bounds
on the size-Ramsey of graphs in $\cG(\Delta,n)$ are actually obtained by
constructing graphs which are $r$-partition universal for the class.
In particular, the result of~\cite{KRSS} states that there is an $r$-partition
universal graph for $\cG(\Delta,n)$ with $e(G)=O\big(n^{2-1/\Delta}(\log
n)^{1/\Delta}\big)$ edges. For partition universality we have much better lower
bounds than for size-Ramsey numbers:
A simple counting argument shows that any universal graph (and so certainly any
$r$-partition universal graph) must have a reasonably large number of edges. For
$\cG(\Delta,n)$, this argument is due to Alon, Capalbo, Kohayakawa, R\"odl,
Ruci\'nski and Szemer\'edi~\cite{ACKRRS} and shows that any graph~$G$ which is
universal for $\cG(\Delta,n)$ has $e(G)=\Omega\big(n^{2-2/\Delta}\big)$ edges.

Which graph~$G$ does~\cite{KRSS} (and many other results) use for establishing
partition universality? This is the \emph{random graph} $G(N,p)$ for
suitable~$p$, which is the graph on~$N$ vertices obtained by including each of
the potential $\binom{N}{2}$ edges independently at random with probability
$p=p(N)$. Hence, the result of~\cite{KRSS} really is a partition universality
statement for $G(N,p)$. This is also true for the result of Conlon, Nenadov and
Truji\'c~\cite{CNT}, and in this setting the bound they obtain is sharp:
when~$p$ is of smaller order than the edge probability they are using, then
$G(N,p)$ ceases to be partition universal for $\cG(3,n)$. To improve on their
bounds when it comes to size-Ramsey numbers, Dragani\'c and Petrova~\cite{DP}
provide an ingenious construction of a graph which has random elements, but also
contains large cliques.

In this paper we also study partition universality of the random graph $G(N,p)$.
Our first result gives a polynomially improved bound for the class of bounded
degree $\mathcal{G}(\Delta,n)$ for arbitrary $\Delta\ge 4$.

\begin{theorem}\label{thm:main2}
  For all $r,\Delta\in\mathbb{N}$ with $\Delta\ge3$ and $\mu>0$ there is $c>0$
  such that the following holds. If $\Delta=3$ set $p=c^{-1}N^{-2/5}$. Otherwise
  set $p=N^{\mu-1/(\Delta-1)}$. Then the random graph $G(N,p)$ a.a.s.\ is
  $r$-partition universal for $\mathcal{G}(\Delta,cN)$. Furthermore, if
  $p=N^{-1/2+\mu}$ then a.a.s.\ $G(N,p)$ is $r$-partition universal for the
  subset of $K_4$-free graphs in $\cG(3,cN)$.
\end{theorem}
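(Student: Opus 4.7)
The plan is to follow the standard sparse-regularity framework for random-graph partition universality, but with an embedding step tuned to exploit bounded degeneracy.

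First I would expose $G=G(N,p)$ for the stated $p$, fix an arbitrary $r$-colouring $\chi:E(G)\to[r]$ of its edges, and apply a sparse regularity lemma (\`a la Kohayakawa--R\"odl) simultaneously to all $r$ colour classes $G_i=\chi^{-1}(i)$. This yields an equitable partition $V_1,\dots,V_t$ of $V(G)$, with $t=t(\eps,r)$ bounded, such that for all but $\eps\binom{t}{2}$ pairs $(V_a,V_b)$ the induced bipartite graph in each colour is $(\eps,p)$-regular. Standard concentration bounds for $G(N,p)$ (upper-tail and lower-tail estimates on edges between fixed sets, plus control on joint neighbourhoods of small vertex sets) hold a.a.s.\ at the stated $p$, so I may assume they hold deterministically from this point.

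Next I form the $r$-edge-coloured reduced graph $R$ on $[t]$, colouring each regular pair by the majority colour (breaking ties arbitrarily). By Ramsey's theorem, $R$ contains, in some colour $i^*$, a monochromatic copy of a large dense structure suitable for embedding: for the bounded-degree statement I take a monochromatic clique $K_s$ with $s$ large in terms of $\Delta$ and $\mu$, and for the $K_4$-free statement a correspondingly large monochromatic triangle-containing structure. The monochromatic sub-partition then provides a $t'$-part $(\eps,p/r)$-regular blow-up of $K_s$ in colour $i^*$, on a vertex set of size $(1-o(1))N$.

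The main step is to embed any given $F\in\cG(\Delta,cN)$ into this monochromatic regular blow-up. I would process the vertices of $F$ in a degeneracy order $v_1,\dots,v_n$ (so each $v_j$ has at most $D\le\Delta$ neighbours among $\{v_1,\dots,v_{j-1}\}$), assigning the $v_j$'s greedily to parts of the blow-up via a homomorphism from $F$ into $K_s$ of bounded \emph{chromatic/degeneracy width}, and then embedding vertex-by-vertex while maintaining, for every not-yet-embedded $v_j$, a candidate set $C(v_j)$ that is essentially the common neighbourhood in $G$ of the images of its already-embedded back-neighbours. The key invariant is that $|C(v_j)|\gtrsim p^{d_j}\cdot|V_{\phi(v_j)}|$, where $d_j$ is the number of already-embedded neighbours of $v_j$; this is sustained by an \emph{inheritance} lemma saying that in $G(N,p)$ the regularity of pairs is inherited to typical small joint-neighbourhoods, at the stated~$p$.

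The hard part is carrying out this embedding at edge probability $p=N^{\mu-1/(\Delta-1)}$, which lies \emph{below} the Kohayakawa--R\"odl--Schacht--Szemer\'edi threshold $N^{-1/\Delta}(\log N)^{1/\Delta}$. The polynomial improvement from $1/\Delta$ to $1/(\Delta-1)$ should come from the observation that along a degeneracy order each new vertex is placed using only $D-1$ back-neighbours as ``constraints'' while the last neighbour plays the role of ``parent'' responsible for the placement; thus one needs inheritance of regularity only up to joint neighbourhoods of size $\Delta-1$, not $\Delta$. For $\Delta=3$ the same scheme yields the $N^{-2/5}$ threshold, and in the $K_4$-free subclass the absence of $K_4$ allows a further reduction in the number of back-neighbours that must be simultaneously controlled, giving the $N^{-1/2+\mu}$ threshold. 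I expect the main technical obstacle to be establishing the precise inheritance / joint-neighbourhood counting estimates for $G(N,p)$ at exactly these densities: one must show that a.a.s.\ almost all $(\Delta-1)$-tuples of vertices within each regular pair have common neighbourhood of the expected size $\Theta(p^{\Delta-1}|V_a|)$, and that the restricted bipartite graph remains sufficiently regular there. Once this pseudorandomness is in hand, a careful greedy embedding together with a buffer of ``flexible'' vertices at the end to absorb the last few $v_j$'s (in the style of an absorption argument) completes the proof.
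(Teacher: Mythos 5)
Your proposal captures the broad shape of the argument (sparse regularity on the colour classes, Ramsey on the reduced graph to find a monochromatic structure, vertex-by-vertex embedding in a degeneracy order, concentration of common neighbourhoods), but the two steps that actually carry the weight of the theorem are missing or replaced by something that would not work.

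First, the mechanism you give for improving $1/\Delta$ to $1/(\Delta-1)$ --- ``each new vertex is placed using only $D-1$ back-neighbours as constraints while the last neighbour plays the role of parent'' --- is not a real argument: in any vertex order of a $\Delta$-regular graph the last vertex has $\Delta$ back-neighbours, and inheritance of regularity to $\Delta$-wise common neighbourhoods is exactly what forces $p\gtrsim N^{-1/\Delta}$ in the KRSS scheme. The actual source of the gain is that a \emph{connected} graph with $\Delta(F')\le\Delta$ that is not $\Delta$-regular is $(\Delta-1)$-degenerate; for $\Delta$-regular components the paper deletes a short induced cycle or path $Q$ (of bounded length), embeds $F'-Q$ as a $(\Delta-1)$-degenerate graph, and then uses a separate ``completable embedding'' device plus Lemma~\ref{lem:good-emb}\ref{lem:good-emb:complete} to glue $Q$ back in at the end. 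Your proposal has no analogue of this step, and without it the $\Delta$-regular components (in particular $K_4$ itself when $\Delta=3$) block the argument. Relatedly, your explanation of the $\Delta=3$ thresholds is off: $N^{-2/5}$ is dictated by $m_2(K_4)=5/2$ (needing a monochromatic $K_4$), and excluding $K_4$ removes exactly that obstruction, giving $N^{-1/2+\mu}$; it is not a matter of ``fewer back-neighbours to control.''

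Second, the embedding engine you propose --- regularity inheritance into joint neighbourhoods plus a greedy walk with an absorbing buffer --- is precisely the machinery that gives $N^{-1/\Delta+o(1)}$ and does not by itself push below it. The paper instead maintains, for each not-yet-reached vertex $y$, a count of ``$dp$-unpromising extensions'' of the current partial homomorphism to a look-ahead graph $H_1'(y)$ of radius $\ell_1$, crosses off candidate images of $x$ that would inflate this count by too much, and controls the boundary case via Spencer's rooted-extension theorem rather than via regularity inheritance (Lemma~\ref{lem:far-away}). Finally, the endgame is not absorption but Nenadov's robustness lemma (Lemma~\ref{lem:CN}): the family of homomorphisms is shown to be large and locally determined, and that alone forces the existence of an injective member. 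As written, your sketch would reproduce the known $N^{-1/\Delta}$-type bound, not the stated one, and it has no route to complete $\Delta$-regular components.
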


This theorem is sharp in random graphs for $\Delta=3$ (re-proving the earlier
result of~\cite{CNT}), since for $p\ll N^{-2/5}$ there are
a.a.s.\ $2$-colourings of $G(N,p)$ which contain no monochromatic copy of
$K_4$. For $\Delta=4$, similarly we need $p\gg N^{-1/3}$ and the exponent is
asymptotically correct. For $\Delta\ge5$ the best lower bound we know is $p\gg
N^{-2/(\Delta+2)}$, by considering $K_{\Delta+1}$, and we tend to suspect the
lower bound is more likely correct than our upper bound.  As explained above
this result gives an immediate improvement on the size-Ramsey number for graphs
in $\cG(\Delta,n)$.

\begin{corollary}
  For $\Delta>3$ and $\mu>0$
  any $F\in\cG(\Delta,n)$ has $\hat{r}_r(F)=O\big(n^{2+\mu-1/(\Delta-1)}\big)$.
\end{corollary}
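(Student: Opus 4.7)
The plan is to derive this corollary directly from Theorem \ref{thm:main2} by the standard edge-count translation between partition universality and size-Ramsey. First, fix $\Delta > 3$, $\mu > 0$, and the prescribed number of colours $r$, and apply Theorem \ref{thm:main2} to obtain a constant $c = c(r, \Delta, \mu) > 0$ such that, with $p := N^{\mu - 1/(\Delta-1)}$, the random graph $G(N, p)$ is a.a.s.\ $r$-partition universal for $\cG(\Delta, cN)$.

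Next, given $F \in \cG(\Delta, n)$, I would set $N := \lceil n/c \rceil$ so that $cN \ge n$ and hence $F \in \cG(\Delta, cN)$. By the previous step, a.a.s.\ every $r$-colouring of $E(G(N, p))$ produces a monochromatic copy of $F$, so any such realisation is a size-Ramsey witness for $F$ and bounds $\hat{r}_r(F)$ by its edge count.

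Finally, since $\Exp[e(G(N, p))] = \binom{N}{2} p = O(N^2 p)$, a standard Chernoff-type concentration bound yields $e(G(N, p)) \le 2 N^2 p$ a.a.s. A union bound then produces a single deterministic realisation $G$ enjoying both properties, whence
\[
\hat{r}_r(F) \le e(G) = O(N^2 p) = O\big(n^{2 + \mu - 1/(\Delta - 1)}\big),
\]
with the implied constant depending on $r$, $\Delta$, and $\mu$ through $c$. There is no serious obstacle in this derivation: all the substance lies inside Theorem \ref{thm:main2}, and the corollary is a one-line bookkeeping exercise about the expected edge count of $G(N, p)$ at the prescribed density.
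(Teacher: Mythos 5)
Your derivation is correct and is exactly the standard translation from partition universality of $G(N,p)$ to a size-Ramsey bound that the paper has in mind when it calls the corollary an ``immediate improvement''; the paper does not spell it out. The only point worth noting, which you handle fine, is that $N = \lceil n/c\rceil$ with $c$ depending on $r,\Delta,\mu$ introduces a multiplicative constant $c^{-(2+\mu-1/(\Delta-1))}$ that is absorbed into the $O(\cdot)$.
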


We also study how this bound can be improved for graphs with small degeneracy.  A
graph~$F$ has \emph{degeneracy} at most~$D$ if its vertices can be ordered in
such a way so that every vertex has at most~$D$ neighbours among its
predecessors. Let $\mathcal{G}(D,\Delta,n)$ denote the class of $n$-vertex
graphs with maximum degree at most~$\Delta$ and degeneracy at most~$D$ (where we
usually want to think about~$D$ being much smaller than~$\Delta$). Important
classes of graphs have bounded degeneracy. For example, it follows from a
theorem proved independently by Kostochka~\cite{Kostochka} and
Thomason~\cite{Thomason} that this is true for the graphs of any non-trivial
minor closed graph class.  Our second main theorem implies that for graphs $F\in\cG(D,\Delta,n)$ we have $\hat{r}_r(F)=O\big(n^{2+\mu-1/D}\big)$.

\begin{theorem}\label{thm:main}
  For all $r,D,\Delta\in\mathbb{N}$ and $\mu>0$ there is $c>0$ such that the
  random graph $G(N,p)$ with $p=N^{-1/D+\mu}$ a.a.s.\ is $r$-partition universal
  for $\mathcal{G}(D,\Delta,cN)$.
\end{theorem}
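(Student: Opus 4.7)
The plan is to combine a sparse-regularity argument in the $r$-edge-coloured host $G=G(N,p)$ with a degeneracy-driven greedy embedding whose only quantitative requirement is $Np^D\to\infty$, which is exactly what $p=N^{\mu-1/D}$ provides.

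\textbf{Step 1: A monochromatic super-regular structure.} First apply a sparse regularity lemma to the colouring of $G(N,p)$, obtaining a partition into clusters of size $\Theta(N/t)$ such that in each colour most pairs are $(\eps,p)$-regular. Colour each regular pair by its densest colour, and apply the multicolour Ramsey theorem to the (essentially complete, $r$-coloured) reduced graph to obtain, for any prescribed $s=s(r,D,\Delta)$, a monochromatic $K_s$ on clusters $V_1,\dots,V_s$; a standard cleaning step then lets us assume these pairs are $(\eps,p,d)$-super-regular in one fixed colour, say colour~$1$. We choose $s\ge 2\Delta$ from the outset.

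\textbf{Step 2: Embedding $F$ into the super-regular $K_s$-blowup.} Given $F\in\cG(D,\Delta,cN)$, we fix a degeneracy ordering $v_1,\dots,v_n$ of $F$ and an equitable proper colouring $f\colon V(F)\to[s]$, and embed $v_i\mapsto\phi(v_i)\in V_{f(v_i)}$ one vertex at a time. For each unembedded~$v$ we maintain a \emph{candidate set} $C(v)\subset V_{f(v)}$ of common colour-$1$-neighbours of all already-embedded back-neighbours of~$v$, with the twin invariants $|C(v)|\ge\rho|V_{f(v)}|\,p^{|\lneigh(v)\cap\dom\phi|}$ and $(\eps',p)$-regularity of $(C(v),V_{f(u)})$ for every future $F$-neighbour~$u$ of~$v$. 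Two properties of $G(N,p)$ drive the greedy step: a \emph{common-neighbourhood lemma} asserting that any set $S$ of $\le D$ vertices has $(1\pm o(1))|V_j|p^{|S|}$ common colour-$1$-neighbours in each~$V_j$ (proved by Chernoff and a union bound, and precisely requiring $Np^D=N^{D\mu}\to\infty$), and an \emph{inheritance lemma} saying that restriction of an $(\eps,p)$-super-regular pair to the common neighbourhood of $\le D$ vertices preserves regularity at slightly weaker parameters. Since each new image $\phi(v_i)$ affects only the $O(\Delta)$ future vertices adjacent to $v_i$ in $F$, a union bound over these few constraints will leave an image meeting the invariants, so the greedy process runs to completion and produces the desired colour-$1$ copy of~$F$.

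\textbf{Main obstacle.} The hard part will be quantifying both key inputs by the degeneracy~$D$ rather than by the maximum degree~$\Delta$: a direct appeal to a sparse blow-up lemma would force $p\gtrsim N^{-1/\Delta}$, far too large. Obtaining the common-neighbourhood and inheritance lemmas at exponent~$D$ should rely on tracking colour-$1$-degrees along the \emph{back-neighbourhood} $\lneigh(v)$ rather than the full neighbourhood of~$v$, exploiting the defining feature of a degeneracy ordering: only back-neighbours have been embedded when~$v$ is reached, and $|\lneigh(v)|\le D$. The buffer $\mu>0$ in the exponent is precisely what will absorb the $(\log N)^{O(1)}$ losses from the sparse regularity lemma and from concentration/union-bound slack in the two lemmas above.
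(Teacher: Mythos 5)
Your Step 1 is in the same spirit as the paper (sparse regularity plus Ramsey on the reduced graph to find a monochromatic block structure), but Step 2 rests on a claim that is false, and this is precisely the central difficulty the paper has to overcome.

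The ``common-neighbourhood lemma'' you invoke --- that every set $S$ of at most $D$ vertices has $(1\pm o(1))p^{|S|}|V_j|$ common \emph{colour-$1$} neighbours in each $V_j$, provable by Chernoff and a union bound --- cannot hold. Chernoff controls the common neighbourhoods of $D$-sets in the random graph $\Gamma=G(N,p)$, but the colour-$1$ subgraph $G$ is chosen adversarially inside $\Gamma$, and neither concentration nor regularity gives you any handle on the colour-$1$ common neighbourhood of a \emph{specific} $D$-tuple. An adversary can, for instance, arrange that certain pairs of vertices have an empty colour-$1$ common neighbourhood even in a super-regular pair; regularity only speaks about densities between \emph{large} subsets, not about common neighbourhoods of $O(1)$-sized sets. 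Consequently a simple greedy embedding in degeneracy order can walk itself into placing $\lneigh(v)$ on such an ``unpromising'' $D$-set, at which point there is no candidate set left for $v$ and the argument collapses. The paper makes this the explicit heart of the matter: it proves that only a \emph{tiny fraction} of $\le D$-sets are unpromising (via a counting/K{\L}R-type argument, Lemma~\ref{lem:good-emb}), but then must introduce a look-ahead mechanism --- tracking, for every not-yet-embedded $y$ within a constant left distance $\ell_1$ of the current vertex, the fraction of ``unpromising extensions'' of the local subgraph $H_1'(y)$, and crossing off from the current candidate set any vertex that would inflate that fraction too much (Lemmas~\ref{lem:far-away} and~\ref{lem:CN}). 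Nothing like this appears in your plan, and without it the greedy process is not guaranteed to terminate. Your ``inheritance lemma'' has a related problem: maintaining regularity of $(C(v),V_{f(u)})$ after $C(v)$ shrinks to a colour-$1$ common neighbourhood of up to $D$ vertices is again a property of the adversarial colouring, not something you can extract from Chernoff plus upper-regularity; moreover, at exponent $p=N^{\mu-1/D}$ with $\mu$ small, the relevant sets become so small that even typicality of $\Gamma$ does not immediately give the required two-sided regularity.

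A further, more minor gap: your greedy produces a homomorphism, but the theorem needs an \emph{injective} embedding, and the candidate sets shrink to roughly $p^{D}|V_j|=N^{D\mu}\cdot\Theta(1)$ vertices, which can be much smaller than the $\Theta(N)$ vertices already used. The paper handles this with an extra step (Lemma~\ref{lem:CN}, a random partition of $V(\Gamma)$ into $\approx\log N$ buckets and a geometric bucket-filling argument) that converts a robust family of homomorphisms into a single injective embedding; your plan is silent on this.
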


This bound is asymptotically sharp in two ways. First, if we reduce $p$ to
$o\big(N^{-1/D})\big)$, then a simple first moment argument shows that for most
graphs $F$ in $\mathcal{G}(D,\Delta,cN)$, it is not likely that $F$ is a
subgraph of $G(N,p)$, let alone that $G(N,p)$ is universal for the class after
adversarial colouring. Second, the following result shows that for $\Delta=2D+1$ any
universal graph for $\mathcal{G}(D,\Delta,cN)$ necessarily contains
$\Omega\big(N^{2-1/D}\big)$ edges, so that the random graph is an asymptotically
optimal $r$-partition universal graph for $\mathcal{G}(D,\Delta,cN)$.

\begin{theorem}[\cite{ABL}]\label{thm:lowDens}
  Given $D\ge1$, suppose that $n$ is sufficiently large and that the graph $G$ is
  $\cG(D,2D+1,n)$-universal. Then $e(G)\ge\tfrac1{100}n^{2-1/D}$.
\end{theorem}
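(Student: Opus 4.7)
The plan is a counting argument using a rich sub-family $\cF\subseteq\cG(D,2D+1,n)$. The central idea is that if $|\cF_{\text{iso}}|$ is sufficiently large, then $G$ being universal for $\cF$ puts a lower bound on $e(G)$ via a direct enumeration of possible embeddings into $G$.

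First I would reduce to the case $|V(G)|\le Cn$ for some constant $C=C(D)$. This can be done via a minimality argument: if $G$ is a \emph{minimal} $\cG(D,2D+1,n)$-universal graph (i.e.\ no vertex can be deleted while preserving universality), then every vertex of $G$ is essential for embedding some member of $\cG(D,2D+1,n)$, and bookkeeping these essential vertices forces $|V(G)|=O(n)$. Second, I would take $\cF$ to be a family of labelled $D$-degenerate graphs on $[n]$ with close to $Dn$ edges and maximum degree at most $2D+1$, constructed so that $|\cF_{\text{iso}}|\ge n^{(D-1)n}/c^{n}$ for a constant $c=c(D)$. A natural construction fixes the ordering $1<2<\dots<n$ and assigns each vertex $i$ a $D$-subset of predecessors, ensuring degeneracy at most $D$ automatically; the max-degree bound $2D+1$ then requires keeping the implied in-degree of earlier vertices bounded by $D+1$, which one achieves by a careful probabilistic or explicit construction.

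The counting step: for each injection $\phi:[n]\hookrightarrow V(G)$, the number of labellings $F\in\cF$ admitting $\phi$ as an embedding is at most $\binom{e(G[\phi([n])])}{Dn}\le\bigl(em/(Dn)\bigr)^{Dn}$, where $m=e(G)$. Universality guarantees that each iso class of $\cF$ contributes at least $n!/|\operatorname{Aut}(F)|\ge n!/c'^{n}$ embeddable labellings (the orbit of a single embedding under relabellings of $[n]$). Summing over $\phi$, comparing with $|\cF_{\text{iso}}|\cdot n!/c'^n$, and using $N\le Cn$ and Stirling, one obtains
\[
\frac{n^{Dn}}{(cc')^n}\;\le\;N^n\cdot\Bigl(\frac{em}{Dn}\Bigr)^{Dn},
\]
which after taking $n$-th roots and solving for $m$ yields $m\ge \frac{D}{e(Ccc')^{1/D}}\cdot n^{2-1/D}$. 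A careful tracking of the Stirling constants and an optimal choice of the family then delivers the explicit factor $\tfrac{1}{100}$.

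The main obstacle is the construction of $\cF$ with $|\cF_{\text{iso}}|\ge n^{(D-1)n}$ up to an exponential factor: the naive ``pick $D$ predecessors uniformly at random'' process produces in-degrees of order $D\log n$ at early vertices, which violates the max-degree bound whenever $D$ is small, so a more refined structured construction (e.g.\ restricting each vertex to choose predecessors within a window, or an LLL-type local correction) is needed. A less careful bipartite-biregular family only gives the weaker exponent $n^{2-3/(2D)}$, so it is this construction that forces the optimal exponent. The reduction to $|V(G)|=O(n)$ is the other technical ingredient but is largely a compactness statement.
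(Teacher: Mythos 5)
Note first that Theorem~\ref{thm:lowDens} is stated here as a citation to~\cite{ABL}; the present paper contains no proof of it, so I can only assess your sketch on its own terms.

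Your counting instinct is the right one, but two of the steps do not hold up. The reduction to $|V(G)|\le Cn$ via minimality is both invalid and unnecessary. Minimality only gives that for each vertex $v$ of $G$ there is some $F\in\cG(D,2D+1,n)$ every embedding of which uses $v$; since the class has super-exponentially many isomorphism types, this is nowhere near forcing $|V(G)|=O(n)$ --- already for $D=1$ a disjoint union of one tree per isomorphism type is a minimal universal graph with exponentially many vertices. The standard argument of Alon, Capalbo, Kohayakawa, R\"odl, Ruci\'nski and Szemer\'edi avoids any vertex bound: for each $F$ that embeds, fix one embedding $\phi$ and record the $e(F)$-element edge set $\phi(E(F))\subset E(G)$; non-isomorphic $n$-vertex graphs give distinct such sets, so the number of isomorphism classes realised in $G$ is at most $\sum_{k\le Dn}\binom{e(G)}{k}\le(Dn+1)\bigl(e\cdot e(G)/(Dn)\bigr)^{Dn}$. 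Comparing this with $|\cF_{\text{iso}}|\ge n^{(D-1)n}/c^n$ yields $e(G)\gtrsim n^{2-1/D}$ with $|V(G)|$ never appearing. Your injection-over-$\phi$ count is strictly weaker, which is exactly why it forces you into the $|V(G)|=O(n)$ detour: with only the trivial bound $|V(G)|\le 2e(G)$ (after deleting isolated vertices), the injection count gives $e(G)\gtrsim n^{2D/(D+1)}$, which is below the target exponent for every $D\ge 2$.

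The second and more serious problem --- which you honestly flag --- is that the heart of the argument, exhibiting a family with $|\cF_{\text{iso}}|\ge n^{(D-1)n}/c^n$, is not supplied, and the fixes you sketch do not get there. A constant-width window $W$ gives at most $\binom{W}{D}^n$ labelled graphs, which is only singly exponential, whereas one needs around $n^{Dn}$ labelled graphs so that $n^{(D-1)n}$ isomorphism classes survive division by $n!$. One genuinely must let each vertex choose its $D$ left-neighbours from a set of size $\Theta(n)$ while simultaneously keeping every back-degree at most $D+1$, and the required lemma --- that among the roughly $\prod_i\binom{i-1}{D}$ labelled $D$-degenerate graphs on $[n]$, at least a $c^{-n}$ proportion (and not an $n^{-\Theta(n)}$ proportion) have all back-degrees at most $D+1$ --- is precisely the nontrivial content still missing. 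Until that estimate and the correct (edge-subset, not injection) counting scheme are in place, this remains a plan rather than a proof.
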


\subsection*{Organisation}

The remainder of this paper is organised as follows.  We start with notation,
some probabilistic tools, and some results concerning densities and partitions
of certain graphs in Section~\ref{sec:prelim}.  We then
outline the proofs of our main theorems and state the main lemmas used in these
proofs in Section~\ref{sec:outline}. In Section~\ref{sec:mainproof} we provide
the proofs of the main theorems. The three main lemmas that are used in these
proofs are established in the subsequent sections: Lemma~\ref{lem:good-emb} in
Section~\ref{sec:goodembs}, Lemma~\ref{lem:CN} in Section~\ref{sec:CN}, and
Lemma~\ref{lem:far-away} in Section~\ref{sec:far-away}. As we shall explain, one
further tool that we shall need is a strengthened sparse regularity lemma
(Lemma~\ref{lem:ssrl}) and a corresponding counting lemma
(Lemma~\ref{lem:counting}). Their proofs use standard machinery; we provide the
details in Appendix~\ref{app:src}.

\section{Preliminaries}
\label{sec:prelim}

\subsection{Notation}

Let~$F$ be a graph on~$n$ vertices. For vertex sets $X,Y\subset V(F)$ we write
$e_F(X)$ for the number of edges of~$F$ with both endpoints in~$X$, and
$e_F(X,Y)$ for the number of edges with one endpoint in~$X$ and one endpoint in~$Y$.
The \emph{distance} $\dist(x,y)$
between two vertices $x,y\in
V(F)$ is the length of a shortest $x,y$-path in~$F$; if such a path does not exist we set
$\dist(x,y)=\infty$. 

We shall often consider a graph~$F$ given with some order on the vertices. In
this case we shall usually simply assume that the vertex set of~$F$ in $[n]$,
and the given order is the natural order of~$[n]$.  We then denote by
$\lneigh_F(x)$ the \emph{left neighbourhood} of~$x$, that is, the set of all
neighbours~$y$ of~$x$ with $y<x$. The \emph{left degree} of~$x$ is
$\ldeg_F(x)=|\lneigh_F(x)|$. Further, we say that such an order of the vertices
of~$F$ is \emph{$D$-degenerate} if $\ldeg_F(x)\le D$ for all vertices~$x$.

We write $F[S]$ for the subgraph of~$F$ induced by~$S$.
The $F$-neighbourhood of $x$ intersected with a set $X\subset V(F)$ is
denoted by $N_F(x;X)$. The \emph{joint neighbourhood} of the vertices
in~$S$ is denoted by $N_F(S)$, and this joint neighbourhood intersected
with~$X$ by $N_F(S;X)$. We omit the subscript~$F$ in this notation when it
is clear from the context.

\subsection{Probabilistic tools, and properties of \texorpdfstring{$G(N,p)$}{G(N,p)}}
\label{sec:Gnp}

We shall use the following Chernoff bounds for binomially distributed random variables.

\begin{theorem}[Corollary~2.4 from~\cite{JLRbook}]
  \label{thm:chernoff}
  Suppose $X$ is a random variable which is 
  the sum of a collection of independent Bernoulli random variables.
  Then we have for $\delta\in(0,3/2)$
  \[\Pr\big(X>(1+\delta)\Exp  X\big)<e^{-\delta^2\Exp X/3}\quad\text{and}\quad
    \Pr\big(X<(1-\delta)\Exp  X\big)<e^{-\delta^2\Exp X/3}
    \,.\]
\end{theorem}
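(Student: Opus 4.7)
The plan is the classical exponential-moment (Bernstein/Chernoff) method. Write $X=\sum_{i=1}^{n} X_i$ with each $X_i\sim\mathrm{Ber}(p_i)$ independent, and set $\mu=\Exp X=\sum_i p_i$. For the upper tail I would introduce a free parameter $t>0$, apply Markov's inequality to the nonnegative variable $e^{tX}$, and use independence to factorise:
\[
 \Pr\bigl(X>(1+\delta)\mu\bigr)\le e^{-t(1+\delta)\mu}\prod_{i=1}^{n}\Exp e^{tX_i}.
\]
Each factor satisfies $\Exp e^{tX_i}=1+p_i(e^t-1)\le\exp\bigl(p_i(e^t-1)\bigr)$ by $1+x\le e^x$, so $\Exp e^{tX}\le e^{\mu(e^t-1)}$. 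Optimising via $t=\ln(1+\delta)$ collapses the bound to $\exp\bigl(\mu\bigl(\delta-(1+\delta)\ln(1+\delta)\bigr)\bigr)$.

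The lower tail is handled symmetrically by applying Markov to $e^{-tX}$ with $t>0$; the same steps give $\Exp e^{-tX}\le e^{\mu(e^{-t}-1)}$, and the choice $t=-\ln(1-\delta)$ produces
\[
 \Pr\bigl(X<(1-\delta)\mu\bigr)\le\exp\bigl(\mu\bigl(-\delta-(1-\delta)\ln(1-\delta)\bigr)\bigr)
\]
for $\delta\in(0,1)$. For $\delta\ge 1$ the lower-tail probability is $0$, since $X\ge 0$, so nothing needs to be said on the remainder of the range $[1,3/2)$ allowed by the statement.

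What is left is the analytic claim that both exponents are bounded above by $-\delta^2/3$ on the appropriate intervals. I would verify this by showing that the auxiliary functions $f(\delta)=\delta-(1+\delta)\ln(1+\delta)+\delta^2/3$ and $g(\delta)=-\delta-(1-\delta)\ln(1-\delta)+\delta^2/3$ are nonpositive on $[0,3/2)$ and $[0,1)$ respectively; since $f(0)=g(0)=0$, an elementary derivative check on each interval suffices. The only mildly delicate point is the constant $1/3$ in the upper tail, which is genuinely close to tight near $\delta=3/2$ and so demands a careful (but still routine) estimate rather than a crude Taylor truncation. No deeper input is required, which is why the paper simply quotes the inequality from \cite{JLRbook}.
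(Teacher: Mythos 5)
Your proposal is correct, but there is nothing in the paper to compare it with: Theorem~\ref{thm:chernoff} is cited verbatim from Corollary~2.4 of Janson--{\L}uczak--Ruci\'nski and the paper supplies no proof of its own. Your argument is the standard exponential--moment derivation: Markov applied to $e^{\pm tX}$, independence to factorise, $1+u\le e^u$ to bound each factor, optimise $t$, and then verify the elementary inequalities
\[
 \delta-(1+\delta)\ln(1+\delta)\le -\tfrac{\delta^2}{3}\quad\text{on }[0,\tfrac32],\qquad
 -\delta-(1-\delta)\ln(1-\delta)\le -\tfrac{\delta^2}{3}\quad\text{on }[0,1).
\]
Both reductions and the split $\delta\ge1\Rightarrow\Pr(X<(1-\delta)\Exp X)=0$ are handled correctly. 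You are also right that the only delicate point is the constant $\tfrac13$ near $\delta=\tfrac32$: the auxiliary function $f(\delta)=\delta-(1+\delta)\ln(1+\delta)+\tfrac{\delta^2}{3}$ has $f(0)=f'(0)=0$, $f''$ changes sign at $\delta=\tfrac12$, so $f$ decreases then increases, and one must actually check $f(\tfrac32)=\tfrac94-\tfrac52\ln\tfrac52\approx-0.04<0$ rather than rely on a Taylor expansion at $0$; the margin there is genuinely small. One further trivial remark: Markov gives a non-strict bound, so the strict ``$<$'' in the statement comes from $f(\delta)<0$ for $\delta>0$ when $\Exp X>0$, together with a direct check in the degenerate case $\Exp X=0$. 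All of this is in line with what the cited source does; your proposal is a correct self-contained derivation.
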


We shall next define a few properties that we shall need from the random
graph. We shall start with the neighbourhood property, which states that joint
neighbourhoods are as expected, and the star property, which generalises this to
the union of several independent joint neighbourhoods.

\begin{definition}[neighbourhood property, star property]
  We say that a graph~$\Gamma$ on~$N$ has the \emph{$(D,\eps,p)$-neighbourhood
    property} if for every $k\le D$ and for every collection $x_1,\dots,x_k$ of
  distinct vertices of~$\Gamma$, we have
  $|N_{\Gamma}(x_1,\dots,x_k)|=(1\pm\eps)p^kN$.

  We say that~$\Gamma$ has the \emph{$(D,\eps,p)$-star property} if
  for each $1\le k\le D$ and each collection $\mathcal{B}$ of at most
  $\eps p^{-k}$ pairwise disjoint $k$-sets in $V(\Gamma)$, we have
  $|\bigcup_{B\in\mathcal{B}}N_\Gamma(B)|=(1\pm\eps)p^k N|\mathcal{B}|$.
\end{definition}

Observe that if a graph has the $(D,\eps,p)$-star property, it also has the
$(D,\eps,p)$-neigh\-bour\-hood property. The following lemma is proved as part of~\cite[Proposition~2.5]{CN}.

\begin{lemma}\label{lem:typical}
  For every~$D$ and~$\eps>0$ there exists~$C$ such that
  for $p\ge(\frac{C\log N}{N})^{1/D}$ the random graph $G(N,p)$ a.a.s.\ has
  the $(D,\eps,p)$-star property, and hence also the $(D,\eps,p)$-neighbourhood property.
\end{lemma}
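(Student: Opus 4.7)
Since the $(D,\eps,p)$-star property immediately implies the $(D,\eps,p)$-neighbourhood property (the case $\mathcal{B}=\{\{x_1,\ldots,x_k\}\}$), it suffices to verify the star property a.a.s.\ in $G(N,p)$. Fix $1\le k\le D$ and a collection $\mathcal{B}$ of $b:=|\mathcal{B}|\le\eps p^{-k}$ pairwise disjoint $k$-subsets of $V(\Gamma)$. For each $y\in V(\Gamma)\setminus\bigcup\mathcal{B}$ let $Z_y$ be the indicator of the event $y\in\bigcup_{B\in\mathcal{B}}N_\Gamma(B)$, and set $X=\sum_y Z_y=|\bigcup_{B\in\mathcal{B}}N_\Gamma(B)|$. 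The disjointness of the sets in $\mathcal{B}$ is used twice: first, for a fixed $y$ the events $\{y\in N_\Gamma(B)\}_{B\in\mathcal{B}}$ depend on disjoint edge sets and are thus independent, giving $\Prob(Z_y=1)=1-(1-p^k)^b$; second, the $Z_y$'s for distinct $y$ again depend on disjoint edge sets and are mutually independent, so Theorem~\ref{thm:chernoff} applies to $X$.

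To estimate $\Exp X=(N-kb)\bigl(1-(1-p^k)^b\bigr)$ I would combine Bernoulli's inequality with a quadratic Taylor bound to obtain $(1-\eps/2)bp^k\le 1-(1-p^k)^b\le bp^k$ (using $bp^k\le\eps\le 1$), and also $kb\le k\eps p^{-k}\le k\eps(N/(C\log N))^{k/D}=o(N)$, from which $\Exp X=(1\pm\eps/3)p^kbN$ for $N$ large. Theorem~\ref{thm:chernoff} then yields
\[
\Prob\bigl(|X-\Exp X|>(\eps/3)\Exp X\bigr)\le 2\exp(-c\eps^2 p^kbN)
\]
for an absolute constant $c>0$, so $X=(1\pm\eps)p^kbN$ with the stated failure probability. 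A union bound over $1\le k\le D$, over $b\ge 1$, and over the at most $N^{kb}$ such collections $\mathcal{B}$ of size $b$ bounds the total failure probability by $\sum_{k,b}2\exp(kb\log N-c\eps^2 p^kbN)$.

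To finish, the hypothesis $p\ge(C\log N/N)^{1/D}$ gives $p^kN\ge C^{k/D}(\log N)^{k/D}N^{1-k/D}\ge (2k/(c\eps^2))\log N$ for every $1\le k\le D$, once $C=C(D,\eps)$ is large enough; each exponent in the union bound is then at most $-\tfrac12 kb\log N$, and the double sum tends to $0$. The main obstacle is not conceptual but bookkeeping: one has to arrange the constants so the $(1\pm\eps/3)$ error in the mean combined with the $(1\pm\eps/3)$ Chernoff deviation gives the required $(1\pm\eps)$, and verify that $p^kN\gg\log N$ holds uniformly over $k\le D$. The essential point is the dual use of the disjointness of $\mathcal{B}$, since it is exactly what makes both levels of independence---across $B$'s for a fixed $y$, and across $y$'s---available for Chernoff.
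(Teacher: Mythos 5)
Your argument is correct and is the standard one: the two levels of independence coming from disjointness of the sets in $\mathcal{B}$ (across $B$'s for a fixed $y$, and across $y$'s) let you write $\bigl|\bigcup_{B}N_\Gamma(B)\bigr|$ as a sum of independent Bernoulli's, Bernoulli's inequality plus a Bonferroni lower bound pin the mean at $(1\pm\eps/3)p^k b N$ once $bp^k\le\eps$, and then Chernoff with a union bound over at most $N^{kb}$ choices of $\mathcal{B}$ closes the argument because $p^kN\gtrsim C\log N$ uniformly for $k\le D$. The paper does not give its own proof but defers to Nenadov~\cite{CN}, and your proof is essentially the argument carried out there. One pedantic point you gloss over: your $X=\sum_{y\notin\bigcup\mathcal{B}}Z_y$ is not literally $\bigl|\bigcup_{B}N_\Gamma(B)\bigr|$ but rather that quantity minus the (at most $kb$) vertices of $\bigcup\mathcal{B}$ lying in the union of neighbourhoods; since $kb/(p^kbN)=k/(p^kN)=o(1)$, this is absorbed into the error term, but it deserves a sentence.
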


Next, we shall be interested in the count of subgraphs in the random graph which
extend a given set of pre-embedded root vertices. Given graphs~$H$ and~$\Gamma$, let
$R\subset V(H)$ be a set of vertices, which we also call the \emph{roots}, and assume that we
fix an injection~$\pi$ from $R$ to $V(\Gamma)$.  We say that an injection
$\phi\colon V(H)\to V(\Gamma)$ is an \emph{$H$-extension} of~$\pi$ in~$\Gamma$
if $\phi|_{R}=\pi$ and if for every $xy\in E(H)$ with $|\{x,y\}\cap R|\le 1$ we
have $\phi(x)\phi(y)\in E(\Gamma)$. In other words, $\phi$ extends~$\pi$ and is
an embedding of~$H$ to~$\Gamma$ apart from possibly ignoring edges within the
root set~$R$.

In $\Gamma=G(N,p)$, the expected number of $H$-extensions of $\pi$ is roughly
\begin{equation}
\label{eq:Spencer:Exp}
  \ExpExt_\pi=N^{|V(H)\setminus R|}p^{e_H(V(H)\setminus R)+e_H(V(H)\setminus R,R)}\,.
\end{equation}
The following theorem of Spencer~\cite{Spencer} provides a condition on~$p$ for the actual
number of $H$-extensions in $G(n,p)$ being concentrated around this expectation.
This condition uses the following definition.

\begin{definition}[Spencer density]
  Let~$H$ be a graph and $R$ be a set of vertices in~$H$. Then the \emph{Spencer
    density} $m_S(H,R)$ of $(H,R)$ is
  \[ m_S(H,R)= \max_{\substack{X\subset V(H)\setminus R \\ X\neq\emptyset}} \frac{e_H(X)+e_H(X,R)}{|X|}\,.\]
\end{definition}

\begin{theorem}[Spencer~\cite{Spencer}]
\label{thm:Spencer}
  Let~$H$ be a graph, $R$ be a set of vertices in~$H$, and $\eps>0$. There exist
  constants~$K$ and~$t$ such that the random graph $\Gamma=G(N,p)$ with
  \[p\ge K\frac{(\log N)^t}{N^{1/m_S(H,R)}}\] a.a.s.\ has the following property. For every
  injection $\pi\colon R \to V(\Gamma)$ the number $\Ext_\pi$ of $H$-extensions
  of~$\pi$ in~$\Gamma$ satisfies
  $\Ext_\pi=(1\pm\eps)\ExpExt_\pi$.
\end{theorem}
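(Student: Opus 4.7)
The natural plan is to view $\Ext_\pi$ as a polynomial in the edge-indicator variables of $\Gamma = G(N,p)$, apply the Kim--Vu polynomial concentration inequality for each fixed~$\pi$, and then union-bound over the at most $N^{|R|}$ choices of~$\pi$. Concretely, set $V' = V(H)\setminus R$, $v = |V'|$, and $d = e_H(V') + e_H(V', R)$, and write $\Ext_\pi = \sum_\phi I_\phi$, where $\phi$ ranges over injections $V' \to V(\Gamma) \setminus \pi(R)$ and $I_\phi$ is the product of the $d$ edge indicators that $\phi$ (together with $\pi$) requires. This presents $\Ext_\pi$ as a square-free polynomial of degree~$d$ in the $\binom{N}{2}$ edge variables, with $\mathbb{E}\Ext_\pi = (1 + O(1/N))\,\ExpExt_\pi$.

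To apply Kim--Vu I would need, for each $i \in \{0,1,\dots,d\}$, the maximum over sets~$A$ of~$i$ edges of $K_N$ of the expected partial derivative $\mathbb{E}[\partial^A \Ext_\pi]$, which counts extensions conditioned on the $i$ edges in~$A$ being already present. Grouping such configurations by the set $X \subset V'$ of pre-image vertices that are ``pinned'' by~$A$, I expect a bound of the shape
\[
  \frac{\max_A \mathbb{E}[\partial^A \Ext_\pi]}{\mathbb{E}\Ext_\pi} \ \lesssim \ \max_{\emptyset \neq X \subset V'} \frac{1}{N^{|X|}\, p^{e_H(X) + e_H(X, R)}} \ = \ \frac{1}{N\, p^{m_S(H,R)}}\,,
\]
which is exactly why the Spencer density appears in the hypothesis. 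Under $p \ge K (\log N)^t / N^{1/m_S(H,R)}$ with $t$ sufficiently large, this ratio becomes at most $(\log N)^{-s}$ for any desired constant~$s$.

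Kim--Vu then delivers $\Pr\big[\,|\Ext_\pi - \mathbb{E}\Ext_\pi| > \eps\, \mathbb{E}\Ext_\pi\,\big] \le \exp\big(-c\,(\log N)^{s'}\big)$ for some $s'$ depending on $s$ and~$d$; choosing $t$ large makes the failure probability $o(N^{-|R|})$, after which a union bound over the injections~$\pi$ completes the argument. The main obstacle is the partial-derivative calculation: I would need to verify that the dominant contribution to $\max_A \mathbb{E}[\partial^A \Ext_\pi]$ comes from configurations where $A$ consists of edges induced by some $X \subset V'$ together with the $H$-edges from~$X$ to~$R$, rather than from ``mixed'' configurations in which the pinned $R$-$V'$ edges and $V'$-$V'$ edges have only partially overlapping endpoints in~$V'$. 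This reduces to a routine but careful case analysis on the pinned vertex/edge patterns, which in turn fixes the required value of~$t$.
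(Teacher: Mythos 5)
The paper does not prove Theorem~\ref{thm:Spencer}; it states it and cites Spencer's work~\cite{Spencer}, so there is no in-paper proof to compare against. Your proposed route via the Kim--Vu polynomial concentration inequality is a legitimate alternative to Spencer's original argument (which predates Kim--Vu and is of a second-moment/deletion flavour), and Kim--Vu is indeed a canonical tool for rooted-extension counting.

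Moreover, the step you flag as the ``main obstacle'' is cleaner than you fear: for any edge set $A$ realizable by some compatible $\phi$, let $S$ be the set of vertices of $V(\Gamma)\setminus\pi(R)$ spanned by $A$. Every compatible $\phi$ satisfies $\phi^{-1}(S)=X$ for some $X\subset V'$ with $|X|=|S|$, of which there are at most a constant number (depending only on $H$), each contributing at most $N^{v-|X|}$ embeddings. Since $\phi\cup\pi$ is injective, each edge of $A$ is the image of a unique $H$-edge with both ends in $X\cup R$ and not both in $R$, so $|A|\le e_H(X)+e_H(X,R)$; with $p<1$ this gives $\mathbb{E}[\partial^A\Ext_\pi]\le c_H\,p^{\,d-|A|}N^{v-|X|}\le c_H\,\ExpExt_\pi\cdot N^{-|X|}p^{-e_H(X)-e_H(X,R)}$, which is exactly your claimed bound with no separate treatment of ``mixed'' configurations. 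Two bookkeeping points should still be made explicit: Kim--Vu uses $\sqrt{EE'}$ with $E=\max_{0\le i\le d}E_i$ and $E'=\max_{1\le i\le d}E_i$, so you need $E=E_0=\ExpExt_\pi$, which follows once your ratio bound shows $E'<E_0$; and $\lambda$ must absorb both the $n^{d-1}$ factor in the Kim--Vu tail and the $N^{|R|}$ union bound over $\pi$, so one takes $\lambda\asymp(d+|R|)\log N$ and then needs roughly $t\,m_S(H,R)>2d$, which is what pins down $t$ (and $K$) in terms of $H$ and $R$.
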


In our applications of this theorem we shall work with the edge probability
$p=n^{-\frac1D+\mu}$, where~$D$ is the degeneracy of the graph we seek to embed and
$\mu$ is an arbitrarily small constant. For this probability
Theorem~\ref{thm:Spencer} applies to~$H$ and~$R$ which satisfy the following
condition (see Corollary~\ref{cor:Spencer}).

\begin{definition}[$(D,\mu)$-Spencer]
  Given a graph $H$ and a set~$R$ of vertices in~$H$,
  we say that $(H,R)$ is \emph{$(D,\mu)$-Spencer} if for every set of vertices
  $X$ in $V(H)\setminus R$, we have
  \[e_H(X)+e_H(X,R)\le (D+\mu)|X|\,.\]
\end{definition}

The following is an immediate corollary of Theorem~\ref{thm:Spencer} and the
previous definition.

\begin{corollary} \label{cor:Spencer}
  Let $D\ge 1$ be a natural number
  and $\mu,\eps>0$. Let $H$ be a graph and $R\subsetneq V(H)$ such that $(H,R)$ is
  $(D,\mu)$-Spencer. The random graph $\Gamma=G(N,p)$ with $p\ge
  N^{-\frac1D+\mu}$ a.a.s.\ satisfies that for every injection $\pi\colon R \to
  V(\Gamma)$ the number $\Ext_\pi$ of $H$-extensions of~$\pi$ in~$\Gamma$ satisfies
  $\Ext_\pi=(1\pm\eps)\ExpExt_\pi$.
\end{corollary}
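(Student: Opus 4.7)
The plan is to derive the corollary directly from Spencer's Theorem~\ref{thm:Spencer}: translate the combinatorial $(D,\mu)$-Spencer assumption into the density bound $m_S(H,R) \le D+\mu$, then verify that the stated edge probability $p = N^{-1/D+\mu}$ satisfies Spencer's hypothesis.

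First I would unpack the definition of $(D,\mu)$-Spencer: for every non-empty $X \subset V(H)\setminus R$ it asserts $e_H(X) + e_H(X,R) \le (D+\mu)|X|$, i.e.\ $(e_H(X)+e_H(X,R))/|X| \le D+\mu$. Taking the maximum of this ratio over all such $X$ is exactly the definition of $m_S(H,R)$, so we obtain $m_S(H,R) \le D+\mu$, and hence $1/m_S(H,R) \ge 1/(D+\mu)$.

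Next I would check that, for $N$ large enough, $p = N^{-1/D+\mu}$ exceeds the threshold $K(\log N)^t\cdot N^{-1/m_S(H,R)}$ demanded by Theorem~\ref{thm:Spencer} (with the constants $K,t$ chosen for the fixed pair $(H,R)$ and the given $\eps$). Using the density bound from the previous step, the exponent of $N$ in the ratio $p\cdot N^{1/m_S(H,R)}$ is at least
\[
-\tfrac{1}{D} + \mu + \tfrac{1}{D+\mu} \;=\; \mu - \Bigl(\tfrac{1}{D} - \tfrac{1}{D+\mu}\Bigr) \;=\; \tfrac{\mu\bigl(D(D+\mu)-1\bigr)}{D(D+\mu)},
\]
which is a strictly positive constant for every $D\ge 1$ and $\mu>0$ (for $D\ge 2$ this is obvious, and for $D=1$ the numerator equals $\mu^2$). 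Any positive power of $N$ eventually dominates the polylogarithmic factor $K(\log N)^t$, so Spencer's hypothesis is satisfied for $N$ large enough.

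With the threshold verified, applying Theorem~\ref{thm:Spencer} gives, a.a.s., the desired concentration $\Ext_\pi = (1\pm\eps)\ExpExt_\pi$ uniformly over all injections $\pi\colon R\to V(\Gamma)$, as required. There is no real obstacle here: the only substantive point is the elementary exponent inequality displayed above, so the corollary is genuinely immediate from Spencer's theorem together with the definition of $(D,\mu)$-Spencer.
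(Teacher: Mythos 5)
Your proof is correct and follows the same route as the paper's: convert the $(D,\mu)$-Spencer hypothesis into the bound $m_S(H,R)\le D+\mu$, verify the strict exponent inequality $-\tfrac1D+\mu>-\tfrac1{D+\mu}$ (your simplification $\mu\bigl(D(D+\mu)-1\bigr)/\bigl(D(D+\mu)\bigr)>0$ is equivalent to the paper's $-\mu+D^2\mu+D\mu^2>0$), and observe that a positive power of $N$ dominates the polylog factor for $N$ large. The argument and all steps match; no further comment needed.
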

\begin{proof}
  Indeed, as~$H$ is $(D,\mu)$-Spencer, by definition we have $m_S(H,R)\le D+\mu$. Since $-\mu+D^2\mu+D\mu^2 > 0$
  we have $-(D+\mu)+(D+\mu)D\mu > -D$ and dividing by $D(D+\mu)$ gives
  \[-\frac1D+\mu > -\frac{1}{D+\mu}\,.\]
  This implies $N^{-\frac1D+\mu}\ge K\frac{(\log N)^t}{N^{1/m_S(H,R)}}$ for any~$K$
  and~$t$ and for~$N$
  sufficiently large in terms of~$K$ and~$t$. Hence, we get 
  $\Ext_\pi=(1\pm\eps)\ExpExt_\pi$ from Theorem~\ref{thm:Spencer}.
\end{proof}

\subsection{Finding roots}\label{sec:findroots}

For applying Corollary~\ref{cor:Spencer} we will need to find a suitable set of roots.
To this end, we show next that for any $D$-degenerate $H$ and initial segment $I$, for
any set $T$ there is a not-too-large superset $T'$ such that $(H,I\cup T')$ is
$(D,\mu)$-Spencer.

\begin{lemma}\label{lem:findroots}
 Given any $D\ge 1$ and $\mu>0$, let $H$ be $D$-degenerate with a given
 $D$-degeneracy order, let $I$ be the vertices of an initial segment of this
 order, and let $T$ be a set of vertices of $H$. Then there is a superset $T'$
 of $T$, with $|T'|\le D^2|T|^2\mu^{-1}$, such that every vertex of $T'$ is
 connected in $H[T']$ to some member of $T$, and such that $(H,I\cup T')$ is
 $(D,\mu)$-Spencer.
\end{lemma}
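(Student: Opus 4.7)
My plan is to construct $T'$ by iteratively absorbing inclusion-minimal witnesses of the failure of the Spencer condition, and to verify connectivity from the minimality of the witnesses and the size bound from a double-counting argument using the degeneracy structure.

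I would start with $T_0 := T$. As long as $(H, I \cup T_i)$ is not $(D, \mu)$-Spencer, I choose an inclusion-minimal $X_i \subseteq V(H) \setminus (I \cup T_i)$ satisfying
\[ e_H(X_i) + e_H(X_i, I \cup T_i) > (D + \mu) |X_i| , \]
and set $T_{i+1} := T_i \cup X_i$. The output $T'$ is the terminal $T_i$, and termination is immediate because $|T_i|$ strictly grows while $V(H)$ is finite.

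The connectivity property would follow from minimality: for each $v \in X_i$, the set $X_i \setminus \{v\}$ satisfies the Spencer inequality, so subtracting gives $|N_H(v) \cap (X_i \cup I \cup T_i)| > D + \mu$, hence at least $D + 1$. Since $I$ is an initial segment of the degeneracy order and $v \notin I$, we have $|N_H(v) \cap I| \le \ldeg_H(v) \le D$; therefore $v$ has at least one neighbour in $X_i \cup T_i$, yielding an edge in $H[T_{i+1}] \subseteq H[T']$ from $v$ to a previously-constructed vertex. Induction on the step of introduction then gives a path in $H[T']$ from every vertex of $T'$ to a vertex of $T$.

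For the size bound, I would use the degeneracy identity
\[ e_H(Y) + e_H(Y, I) = \sum_{v \in Y} |\lneigh_H(v) \cap (Y \cup I)| \le D|Y| \qquad \text{for } Y \subseteq V(H) \setminus I, \]
which follows directly from summing left-degrees. Applied at each step, the violation reduces to $e_H(X_i, T_i) > \mu|X_i|$, and direction-sensitive bookkeeping refines this to
\[ \sum_{v \in X_i} |\{w \in T_i : vw \in E(H),\ v < w \}| > \mu |X_i| . \]
Dualising, the right-hand side counts left-neighbour slots of $T_i$-vertices consumed by $X_i$, yielding the per-step bound $|X_i| < D|T_i|/\mu$. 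The main work is then to bound the number of iterations and combine these estimates to obtain $|T'| \le D^2|T|^2/\mu$; I would do this by showing that each vertex of $T$ can only ``trigger'' $O(D|T|/\mu)$ further additions in total, using that the iteration strictly increases $e_H(T_i)$ by more than $\mu|X_i|$ and $e_H(T_i)$ is globally bounded by $D|T_i|$.

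The principal obstacle is this final amortisation step. The per-step bound alone gives a geometric growth $|T_i| \le |T|(1 + D/\mu)^i$, which is too weak; avoiding this requires exploiting that newly added vertices do not themselves furnish unbounded new ``slots'' for triggering further additions, so that the total count remains quadratic in $|T|$ rather than exponential in the number of iterations.
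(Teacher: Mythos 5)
Your iterative construction --- absorbing inclusion-minimal Spencer witnesses $X_i$ into $T_i$ --- is exactly the paper's, but both verification steps contain genuine gaps.

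For connectivity, the per-vertex degree argument (each $v\in X_i$ has a neighbour in $X_i\cup T_i$) is correct but insufficient: that neighbour may itself lie in $X_i$, and "induction on the step of introduction" does not close if the whole component of $v$ in $H[X_i]$ happens to have no edge to $T_i$. Nothing in the singleton-minimality estimate rules out such an isolated component; for instance it is consistent with your bound that two vertices of $X_i$ are each other's only neighbour outside $I$. The paper instead applies minimality of $X_i$ to the \emph{set} $Y\subseteq X_i$ of vertices not connected in $H[T_{i+1}]$ to $T$: since $Y$ has no edges to $(X_i\setminus Y)\cup T_i$, and since $e(Y)+e(Y,I)\le D|Y|$ by degeneracy, a short computation shows $X_i\setminus Y$ would still violate Spencer, contradicting minimality unless $Y=\emptyset$. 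That set-level use of minimality is what is missing.

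For the size bound you correctly derive $|X_i| < D|T_i|/\mu$ and correctly observe this alone only gives geometric growth. But the amortisation you propose --- that $e_H(T_i)$ increases by more than $\mu|X_i|$ per step while remaining $\le D|T_i|$ --- is circular: summing gives $\mu(|T_t|-|T|) < D|T_t|$, which is vacuous whenever $D>\mu$ precisely because the ceiling $D|T_i|$ grows with the quantity you are trying to bound. The idea you are missing is the paper's sandwich inequality
\[ D|T_i\setminus T| + i \;\le\; e(T_i\setminus T) + e(T_i\setminus T, I\cup T) \;\le\; D|T_i|\,, \]
whose left-hand side references $T$, not $T_i$, and also tracks the step counter $i$. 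From it one reads off $i\le D|T|$, and by taking the difference of the upper bound at step $i+1$ against the lower bound at step $i$ and invoking the Spencer violation of $X_i$, one gets the \emph{uniform} per-step bound $|X_i|< D|T|\mu^{-1}$ (with $|T|$ in place of $|T_i|$). Boundedly many steps times a uniform increment linearises the growth and gives $|T'|\le D^2|T|^2\mu^{-1}$.
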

\begin{proof}
  Let $T_0=T$. We now iterate the following procedure for $i=0,1,\dots$. If
  $(H,I\cup T_{i})$ is $(D,\mu)$-Spencer, we stop. Otherwise, we take a minimal set
  $X$ witnessing that $(H,I\cup T_i)$ is not $(D,\mu)$-Spencer, set
  $T_{i+1}=T_i\cup X$, and repeat.

  We first claim that in each iteration we maintain the desired
  connectivity. This is trivially true for $T_0$; suppose inductively that it
  holds for $T_i$. Let $X=T_{i+1}\setminus T_i$. By choice of~$X$ we have
  \[ e(X)+e(X,I\cup T_i)>(D+\mu)|X|\,.\]
  Now let $Y\subset X$ be the set of vertices which are not connected in
  $H[T_{i+1}]$ to any member of $T$. By definition of $Y$ there are no edges
  from $Y$ to $(X\setminus Y)\cup T_i$, and hence $e(X\setminus Y)=e(X)-e(Y)$
  and $e(X\setminus Y,I\cup T_i)=e(X,I\cup T_i)-e(Y,I)$.  Since~$I$ is an
  initial segment of the degeneracy order each edge from~$Y$ to $Y\cup I$ has
  the endpoint that comes later in the degeneracy order in~$Y$, hence there can
  be at most $D|Y|$ such edges, that is, $e(Y)+e(Y,I)\le D|Y|$. We conclude that
  \begin{multline*}
      e(X\setminus Y)+e(X\setminus Y,I\cup T_i)=e(X)-e(Y)+e(X,I\cup T_i)-e(Y,I) \\
      \ge e(X)+e(X,I\cup T_i)-D|Y|
      > (D+\mu)|X|-D|Y|
      >(D+\mu)\big| X\setminus Y\big|\,,
  \end{multline*}
  which is a contradiction to the assumed minimality of $X$ unless $Y=\emptyset$.

  Second, we claim that for each $i$ we have
  \begin{equation}\label{eq:findroots}
    D|T_i\setminus T|+i\le e(T_i\setminus T)+e(T_i\setminus T,I\cup T)\le D|T_i|\,.
  \end{equation}
  The second inequality is easy: as before we observe that any edge counted in
  the middle sum has the endpoint that comes later in the degeneracy order
  in~$T_i$. The first inequality we prove inductively. It is trivially true for
  $i=0$, when the left-hand side and middle sum are equal to zero. Assuming it
  is true for a given $i\ge 0$, because we obtain $T_{i+1}$ from $T_i$ by adding
  a witness that $(H,I\cup T_i)$ is not $(D,\mu)$-Spencer, by definition the number
  of edges we add to the middle sum is strictly larger than $D|T_{i+1}\setminus
  T_i|$, justifying the first inequality.

  We conclude from~\eqref{eq:findroots} that $i\le D|T|$, justifying that we
  stop after $t\le D|T|$ steps. Moreover, \eqref{eq:findroots} gives
  $e(T_{i+1}\setminus T)+e(T_{i+1}\setminus T,I\cup T)\le D|T_{i+1}|$ and
  $e(T_{i}\setminus T)+e(T_{i}\setminus T,I\cup T)\ge D|T_{i}\setminus T|$. Since
  \[e(T_{i+1}\setminus T_i)+e(T_{i+1}\setminus T_i,I\cup T_i)=
  e(T_{i+1}\setminus T)+e(T_{i+1}\setminus T,I\cup T)-e(T_i\setminus T)-e(T_i\setminus T,I\cup T)\]
  we conclude
  \[D|T_{i+1}\setminus T_i|+D|T|\ge e(T_{i+1}\setminus T_i)+e(T_{i+1}\setminus T_i,I\cup T_i)\,.\]
  Because $T_{i+1}\setminus T_i$ is a witness that $(H,I\cup T_i)$ is not $(D,\mu)$-Spencer, this in turn implies
  \[D|T_{i+1}\setminus T_i|+D|T|>(D+\mu)|T_{i+1}\setminus T_i|\,.\]
  It follows that $|T_{i+1}\setminus T_i|< D\mu^{-1}|T|$, and so we stop with
  a set $T_t$ of size at most $D^2|T|^2\mu^{-1}$, as desired.
\end{proof}

\subsection{Densities and partitions of graphs}\label{sec:dens}

Given a graph $H$ with at least one edge and three vertices, we define
$d_2(H):=\frac{e(H)-1}{v(H)-2}$. For convenience, we say that $d_2(K_2)=\tfrac12$ and
$d_2(H)=\frac12$ for $H$ any graph with no edges. We define $m_2(H)$ to be the maximum
of $d_2(H')$ over all subgraphs $H'$ of $H$. This quantity is important in
random graph theory in that there is a transference of the Counting Lemma for
counting copies of $H$ to subgraphs of $G(n,p)$ if $p\gg n^{-1/m_2(H)}$; we will
state this formally and derive one of the main tools needed for this paper in
Section~\ref{sec:goodembs}. For now, we need to upper bound $m_2(H)$ for various graphs $H$.

\begin{lemma}\label{lem:2densities}
  Let~$F$ be a $D$-degenerate graph with $D\ge 1$. Then
  \begin{enumerate}[label=\abc]
    \item\label{itm:2dens:degen} $m_2(F)\le D$, with equality possible only if $D\le 2$.
  \end{enumerate}
  Let~$H$ be a connected graph with maximum degree $D+1$ for $D\ge 2$. Then
  \begin{enumerate}[label=\abc,start=2]
     \item\label{itm:2dens:K4} $m_2(H)=\tfrac52$ if $H=K_4$,
     \item\label{itm:2dens:notK4} $m_2(H)\le 2$ if $D=2$ and $H\neq K_4$, and
     \item\label{itm:2dens:bigD} $m_2(H)\le D$ if $D\ge3$.
  \end{enumerate}
  Now assume in addition that $Q$ is an induced cycle or an induced path in $H$
  and let $H^+$ be the graph obtained from~$H$ by duplicating $Q$, that is, by
  adding a copy $Q'$ of $Q$ with new vertices $\{y':y\in V(Q)\}$ and joining
  $y'$ to all vertices $N_H(y)\setminus V(Q)$. Then
  \begin{enumerate}[label=\abc,start=5]
  \item\label{itm:2dens:cycle} $m_2(H^+)\le\max\big(m_2(H),D\big)$ if $Q$ is an
    induced cycle with $\ell\ge3$ vertices,
  \item\label{itm:2dens:path}
    $m_2(H^+)\le\max\big(m_2(H),D\tfrac{\ell}{\ell-2}\big)$ if $Q$ is an induced
    path with $\ell\ge3$ vertices.
  \end{enumerate}
\end{lemma}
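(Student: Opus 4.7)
The plan is to bound $d_2(H')$ for every non-trivial subgraph $H'$ of the relevant graph, using three separate tools: a degeneracy-based edge count for~(a), a maximum-degree-based edge count for (b)--(d), and a direct count exploiting the duplication structure for (e) and~(f).

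For~(a), let $F'\subseteq F$ with $k=v(F')\ge 3$. Summing left-degrees in a $D$-degenerate order of $F'$ gives $e(F')\le\binom{D+1}{2}+D(k-D-1)=Dk-\binom{D+1}{2}$ when $k\ge D+1$, and $e(F')\le\binom{k}{2}$ for $3\le k\le D$. Substituting into $d_2(F')=(e(F')-1)/(k-2)$ reduces the bound $d_2(F')\le D$ to $(D-1)(D-2)\ge 0$ in the first regime and to $(k+1)/2\le D$ in the second; the former is always true and forces equality only at $D\in\{1,2\}$, while the latter is automatic.

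For (b), a direct calculation gives $d_2(K_4)=5/2$ and checks that every proper subgraph has smaller $d_2$. For~(c), the key observation is that a connected graph of maximum degree three other than $K_4$ cannot contain $K_4$ as a subgraph, because any $K_4$-vertex would already have three $K_4$-neighbours and hence no room for further edges; so $e(H')\le 3v(H')/2$, and a short algebraic check, treating $v(H')\in\{3,4\}$ separately, yields $d_2(H')\le 2$. Part~(d) is entirely analogous using $e(H')\le(D+1)v(H')/2$: for $v(H')\ge 5$ the inequality $d_2(H')\le D$ is algebraic, and $v(H')\in\{3,4\}$ is handled by enumeration, noting $d_2(K_4)=5/2\le D$ whenever $D\ge 3$.

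For (e) and (f), fix $H^*\subseteq H^+$ and write $V(H^*)=X\cup X'$ with $X\subseteq V(H)$, $X'\subseteq V(Q')$, $g=|X|$, $a=|X'|$, and decompose $e(H^*)=e(H^*[X])+e(H^*[X,X'])+e(H^*[X'])$. By construction, $y'\in V(Q')$ is joined only to $N_H(y)\setminus V(Q)$, and each such $y$ has at most $D-1$ outside neighbours in the induced-cycle case and at most $D$ (endpoint) or $D-1$ (interior) in the induced-path case; moreover $Q'[X']$ is a subgraph of the cycle or path $Q'$. For $g\ge 3$ the defining inequality $e_H(X)\le m_2(H)(g-2)+1$ applies; combining the bounds and dividing by $g+a-2$, the target $d_2(H^*)\le M$ with $M=\max(m_2(H),D)$ (cycle) or $M=\max(m_2(H),D\ell/(\ell-2))$ (path) reduces, after absorbing $m_2(H)(g-2)\le M(g-2)$, to $Da\le Ma$ or $D\ell+1\le M\ell$, both of which follow from the definition of $M$ (the path bound being tightest at $X'=V(Q')$). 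The main obstacle is the small-$g$ (and small-$a$) regime, where the $m_2(H)$-bound on $e_H(X)$ is unavailable; there $v(H^*)$ is bounded, so $e(H^*)$ is estimated via $\binom{v(H^*)}{2}$ together with the maximum-degree constraint, and one must check carefully that no spurious dense subgraph (notably a $K_4$) can arise in $H^+$ beyond what is already present in $H$, which is ensured by the same degree argument as in~(c).
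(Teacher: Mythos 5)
Your arguments for parts~\ref{itm:2dens:degen}--\ref{itm:2dens:bigD} are essentially correct and close in spirit to the paper's: you replace the paper's induction in~\ref{itm:2dens:degen} with a direct left-degree summation, and for~\ref{itm:2dens:notK4}--\ref{itm:2dens:bigD} both you and the paper reduce to the bound $e(H')\le\tfrac12(D+1)v(H')$ plus a small case analysis. Your approach to~\ref{itm:2dens:cycle} and~\ref{itm:2dens:path}, however, is genuinely different from the paper's and has a gap. The paper's argument shows that a $d_2$-maximising $X\subseteq V(H^+)$ with $d_2(H^+[X])>D$ has minimum degree at least $D+1$ in $H^+[X]$, and uses this to conclude that $X$ either avoids one of $Q,Q'$ (so is handled by $m_2(H)$) or must be all of $V(H^+)$; only the single subset $V(H^+)$ then needs a direct edge count. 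Your proposal instead decomposes an \emph{arbitrary} subset $X\cup X'$ and bounds the three edge terms separately, which requires a simultaneous control over $g=|X|$ and $a=|X'|$.

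The gap is in your handling of the ``small-$g$'' regime. You assert that when $g$ is small, ``$v(H^*)$ is bounded, so $e(H^*)$ is estimated via $\binom{v(H^*)}{2}$''. This is not true: $a=|X'|$ can be as large as $v(Q)$ even when $g\le 2$, so $v(H^*)=g+a$ is unbounded. For example with $g=1$, $X=\{x\}$ and $X'=V(Q')$ (cycle case), your edge bound $e(X,X')\le(D-1)a$ is far too loose when $D-1>1$ and gives $d_2(H^*)\le(Da-1)/(a-1)$, which exceeds $D$ for small $a$; the correct conclusion $d_2(H^*)\le\max(m_2(H),D)$ requires the extra observation that if $x$ is adjacent to $k$ vertices of $Q'$ then the corresponding subgraph $H[\{x\}\cup V(Q)]$ of $H$ already has $2$-density $(v(Q)+k-1)/(v(Q)-1)$, so any spurious density in $H^*$ is witnessed by a dense subgraph of $H$ itself. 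This ``reflecting back to $H$'' step is exactly what the paper's minimum-degree argument encodes implicitly, and it is missing from your sketch. The cases $g\in\{0,1,2\}$ with large $a$ therefore need to be argued separately and more carefully, not by bounded-size enumeration.
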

\begin{proof}
  Obviously~\ref{itm:2dens:K4} holds.  We now prove~\ref{itm:2dens:degen}. As
  subgraphs of $D$-degenerate graphs are $D$-degenerate, it suffices to show
  that $d_2(F)\le D$ for all degenerate~$F$, with equality only possible if
  $D\le 2$. We prove this by induction on~$v(F)$, the base case being $v(F)\le
  D+1$ when
  \[d_2(F)\le d_2(K_{D+1})=\begin{cases}
    1 & \text{if } D=1 \\
    \frac{D+2}{2}\le D \qquad & \text{if } D>1\,,
    \end{cases}
  \]
  where we have $d_2(K_{D+1})=D$ only for $D\le 2$.
  So assume $v(F)>D+1$. By
  $D$-degeneracy, there is a vertex~$v$ in~$F$ of degree at most~$D$. By induction, we have $e(F-v)-1\le D(v(F)-3)$, with equality only if $D\le2$. So
  \[e(F)-1\le D(v(F)-3)+D\le D(v(F)-2)\]
  and we conclude $d_2(F)\le D$, with equality only if $D\le 2$.

  We next turn to~\ref{itm:2dens:notK4} and~\ref{itm:2dens:bigD}.  If we remove
  any vertex from $H$, then we get a graph which is $D$-degenerate. Thus either
  we have $m_2(H)\le D$ by~\ref{itm:2dens:degen} and are done, or we have
  $m_2(H)=d_2(H)$ and $v(H)\ge D+2$.  In the latter case, since
  $e(H)\le\tfrac12(D+1)v(H)$, we get
  \[m_2(H)\le\frac{\tfrac12(D+1)v(H)-1}{v(H)-2}=\frac{D+1}{2}+\frac{D}{v(H)-2}\,.\]
  This expression decreases as $v(H)$ increases. For $D=2$, if $H\neq K_4$ then we can assume $v(H)\ge5$. A $5$-vertex graph with maximum degree $3$ has at most $7$ edges and so $2$-density at most $\tfrac{6}{3}=2$, as required. If on the other hand $v(G)\ge 6$, then by the above inequality we have $m_2(H)\le\frac{3}{2}+\frac{2}{4}=2$. For $D\ge3$, we have
  $v(H)\ge D+2$ and so $m_2(H)\le\tfrac{D+3}{2}\le D$.

  Finally, we consider $H^+$. Let $X$ be a subset of $V(H^+)$ which maximises
  $d_2\big(H[X]\big)$. If $d_2\big(H[X]\big)\le D$ there is nothing to prove, so
  suppose $d_2\big(H[X]\big)>D$; in particular we have $|X|\ge4$. Observe that
  there cannot be $v\in X$ with less than $D+1$ neighbours in $X$, otherwise
  removing $v$ increases the $2$-density. If $X$ does not intersect both copies of $Q$ in $H^+$, then $H^+[X]$ is a subgraph of $H$ and so $d_2\big(H[X]\big)\le D$, so we can assume $X$ intersects both copies of $Q$ in $H$. But since the vertices in both copies of $Q$ have degree at most $D+1$ in $H^+$, we see that $X$ has to contain all neighbours of each such vertex, i.e.\ $X$ contains both copies of $Q$ and their neighbours. Since the remaining vertices of $H^+$ have degree $D+1$ and $H$ is connected, we conclude $X=V(H^+)$.

  If $Q$ is an induced
  cycle whose vertices have degree at most $D+1$, then there are at most $D|Q|$ edges with at least one end in $Q$; if $d_2(H^+)>D$ then removing a copy of $Q$ gives $d_2(H)>D$, a contradiction.

  The remaining case is that~$Q$ is an induced path and $X=V(H^+)$. Here we have
  $e(H^+)\le \tfrac12(D+1)v(H)+(D-1)v(Q')+2+v(Q')-1=\tfrac12(D+1)v(H)+Dv(Q)+1$,
  since the vertices of $Q'$ each have at most $D-1$ neighbours in $V(H)$, apart
  from the end vertices of $Q'$ which may have $D$ neighbours, and
  $e(Q')=v(Q')-1$. We obtain
\[d_2(H^+)\le\frac{\tfrac12(D+1)v(H)+Dv(Q)}{v(H)+v(Q)-2}\le D\frac{v(H)+v(Q)}{v(H)+v(Q)-2}<D\frac{v(Q)}{v(Q)-2}\,,\]
where for the second inequality we use $\tfrac12(D+1)\le D$. This gives~\ref{itm:2dens:path}.
\end{proof}

Further, we shall need the following result on equipartitions of graphs of
bounded maximum degree into large sets of vertices which are far from each
other. The Hajnal--Szemer\'edi Theorem~\cite{HajSze} states that every graph
with maximum degree~$\Delta$ has a $(\Delta+1)$-colouring with colour classes
differing in size by at most~$1$. We call a partition of the vertex set into such
sets differing in size by at most~$1$ an \emph{equipartition}.  The following
result is an immediate consequence of the Hajnal--Szemer\'edi Theorem applied to
the $\ell$-th \emph{power} of the graph~$F$, which is obtained from~$F$ by
adding all edges between vertices of distance at most~$\ell$, and which has
maximum degree at most $\Delta+\sum_{i=2}^\ell(\Delta-1)^i\le 2\Delta^\ell-1$.

\begin{theorem}
\label{thm:HajSze}
  Let~$F$ be a graph with maximum degree at most~$\Delta\ge 2$ and~$\ell$ be a
  number. Set $h=2\Delta^\ell$. Then there is an equipartition of~$V(F)$ into
  sets $X_1,\dots,X_h$ such that for every $i\in[h]$ every pair of vertices
  $x\neq x'$ in~$V_i$ satisfies $\dist_F(x,x')>\ell$.\qed
\end{theorem}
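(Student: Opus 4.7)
The plan is to reduce the claim to the Hajnal--Szemer\'edi theorem applied to the $\ell$-th power $F^\ell$ of $F$, exactly as suggested by the remark preceding the statement. Recall $F^\ell$ has the same vertex set as~$F$, with $xx'$ an edge whenever $1\le\dist_F(x,x')\le\ell$. The key observation is that an independent set in~$F^\ell$ is precisely a set of vertices of~$F$ that are pairwise at $F$-distance strictly greater than~$\ell$, so a proper $h$-colouring of $F^\ell$ with balanced colour classes gives exactly the equipartition we want.

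To make this work I need to check that the maximum degree of~$F^\ell$ is at most $h-1=2\Delta^\ell-1$, so that Hajnal--Szemer\'edi produces an equipartition into $h$ independent sets of sizes differing by at most~$1$. For a fixed vertex~$v$, the number of vertices at $F$-distance exactly~$i$ from~$v$ is bounded by following paths of length~$i$ starting at~$v$: there are at most~$\Delta$ choices for the first edge and at most~$\Delta-1$ choices at each subsequent step, giving at most $\Delta(\Delta-1)^{i-1}$ vertices at distance~$i$. Summing over $1\le i\le\ell$ yields
\[
\deg_{F^\ell}(v) \;\le\; \sum_{i=1}^{\ell} \Delta(\Delta-1)^{i-1} \;\le\; \Delta + \sum_{i=2}^{\ell}(\Delta-1)^{i}\cdot\tfrac{\Delta}{\Delta-1},
\]
and a routine estimate (using $\Delta\ge 2$ and the geometric-series bound) gives the required upper bound $2\Delta^\ell-1$.

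With that in hand, the Hajnal--Szemer\'edi theorem applied to $F^\ell$ produces a proper colouring of $F^\ell$ with colour classes $X_1,\dots,X_h$ of sizes differing by at most~$1$; translating back to~$F$, each $X_i$ is a set of vertices pairwise at $F$-distance greater than~$\ell$, which is exactly the desired conclusion. There is no real obstacle: the only step requiring care is the degree estimate for $F^\ell$, and even that is a standard counting-paths-from-$v$ argument. The bound $h=2\Delta^\ell$ is chosen precisely so that $\Delta(F^\ell)\le h-1$, which is the numerical input Hajnal--Szemer\'edi requires.
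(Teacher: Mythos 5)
Your proof is correct and takes exactly the approach the paper describes: apply Hajnal--Szemer\'edi to the $\ell$-th power $F^\ell$ after checking $\Delta(F^\ell)\le 2\Delta^\ell-1$. The degree bound does go through (e.g.\ $\sum_{i=1}^{\ell}\Delta(\Delta-1)^{i-1}\le\sum_{i=1}^{\ell}\Delta^i=\frac{\Delta^{\ell+1}-\Delta}{\Delta-1}\le 2\Delta^\ell-1$ since $\Delta^\ell(\Delta-2)+1>0$ for $\Delta\ge 2$), so there is no gap.
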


\section{Proof strategy and main lemmas}
\label{sec:outline}

In this section we explain our strategy for the proofs of our main theorems,
and provide the main lemmas that capture different parts of this strategy.
It turns out that almost all the work is to prove Theorem~\ref{thm:main}, and
the proof of Theorem~\ref{thm:main2} requires only small modifications. We will
therefore in this section mainly concentrate on the proof of Theorem~\ref{thm:main},
but we will also explain the modifications required for Theorem~\ref{thm:main2} and
state our main lemmas in the generality required for applying them in both proofs.

In this section, $F$ will always be the graph we are trying to embed, $\Gamma$ will be a
typical random graph of appropriate edge density whose edges we $r$-colour, and
$G$ will be a subgraph of~$\Gamma$ given by the edges of an appropriately chosen
colour $\chi$, on a carefully chosen subset of vertices, where we want to embed~$F$.

Our strategy is as follows. Firstly, we shall argue that we can reduce the
problem of finding an embedding of~$F$ in~$G$ to the problem of `robustly'
finding partial homomorphisms from $F$ to~$G$ satisfying certain properties (see
Lemma~\ref{lem:CN}). This technique is taken from~\cite{CN}. For finding these
homomorphisms we then proceed vertex by vertex. Assuming the vertices of~$F$ are
given in a degeneracy order, at step~$x$ we extend the homomorphism $\psi_{x-1}$
from~$F[[x-1]]$ to~$G$ to a homomorphism~$\psi_{x}$ from $F[[x]]$ to~$G$.

To do this, we need that we have a large \emph{candidate set} for~$x$, which is the set of
common neighbours in~$G$ of the set $\psi_{x-1}(\lneigh(x))$ of vertices of~$G$
hosting already embedded neighbours of~$x$.  Observe that
$\psi_{x-1}(\lneigh(x))$ is a set of up to $D$ vertices. We will be able to
ensure that most sets of size up to $D$ in $G$ have a large common
neighbourhood, but a tiny proportion of such sets will be \emph{unpromising},
i.e.\ they have only few common neighbours. Our strategy will ensure that
$\lneigh(x)$ is never embedded to an unpromising set. The next paragraph gives
the idea of how this works, but omits some technical details.

We look ahead a large constant distance $\ell_1$. So, when we want to embed $x$,
we look ahead to each vertex $y$ which is after $x$ in the degeneracy order, but
whose distance from $x$ is at most $\ell_1$. In order to decide how to embed
$x$, we look at the induced subgraph $H'_1(y)$ of all vertices at distance
between $1$ and $\ell_1$ from $y$ in $F$. We shall show (in
Lemma~\ref{lem:good-emb}) that, before embedding any vertices, only a tiny
fraction of embeddings of $H'_1(y)$ are \emph{unpromising}, i.e.\ send
$\lneigh(y)$ to an unpromising set. As we embed more and more of $F$, we keep
track of the fraction of unpromising \emph{extensions} of $H'_1(y)$. Here
extensions means embeddings of $H'_1(y)$ which are consistent with the current
partial homomorphism of $F$. When we embed a vertex outside $H'_1(y)$, by
definition the fraction of unpromising extensions does not change. When we
embed~$x$, on the other hand, which is a vertex of $H'_1(y)$ we will make sure
that the fraction of unpromising extensions does not increase too much:
we shall \emph{cross off} from the candidate set of~$x$ vertices
$v$ which would increase the fraction of unpromising extensions by too large a
factor, i.e. we shall not embed~$x$ to~$v$.

An easy counting argument proves that if $x$ is at distance strictly less than
$\ell_1$ from $y$, then we only cross off few vertices to which we could map
$x$. If $x$ is at distance exactly $\ell_1$ from $y$, on the other hand, then we
do not cross off any vertices: no choice of $v$ can increase the fraction of
unpromising embeddings by a large factor (see Lemma~\ref{lem:far-away}).

Putting the pieces together, this strategy ensures that the fraction of
unpromising extensions is always strictly less than $1$. This is in particular
true once all vertices in $\lneigh(y)$ are embedded, which means we do not
choose an unpromising set to embed $\lneigh(y)$ to. From the point of view of
embedding $x$ on the other hand, as we said we look ahead to all $y$ within
distance $\ell_1$ of $x$; since $\Delta(F)\le\Delta$ there are a bounded number
of these. For each such $y$, we cross off the vertices from the candidate set
of~$x$ which would make the fraction of unpromising extensions of $H'_1(y)$
increase too much. Since there are few such vertices, and since we inductively
ensured that $\lneigh(x)$ was not embedded to an unpromising set, there remain
many un-crossed-off vertices to which we can embed $x$.

In the next subsection, we fill in the technical details missing from the above,
and state the various lemmas we need to prove that it works. We should warn the
reader that the word `extension' above will turn out not to refer to an
embedding of $H'_1(y)$ into $G$, but to something a bit more complicated, the
explanation of which we omitted here for simplicity (but which will be provided
shortly).

\subsection{Main lemmas and definitions}

We first fix values for the various constants we need, and formally define the
various subgraphs of $F$ that we are interested in. The \emph{left distance}
$\ldist_F(y,x)$ of vertices $y>x$ of~$F$ is the length~$\ell$ of a shortest path
$y=x_0,\dots,x_\ell=x$ such that $x_i>x_{i+1}$ for all~$i$; it is infinite if no
such path exists. The following Setup mentions a subgraph $H_0(y)$ not discussed in the above sketch: we will explain it immediately afterwards.

\begin{setup}[$F$ and its subgraphs $H_0(y)$ and $H_1(y)$]
\label{setup:F}
  Given $r,D,\Delta\in\mathbb{N}$ and $\mu>0$ 
  we define the constants
  \begin{equation}\label{eq:consts}
    h_0:=16D^5\mu^{-3}\,,\quad\ell_1:=D^2h_0^2\mu^{-1}+20\,,\quad\text{and}\quad h_1=2\Delta^{\ell_1}\,.
  \end{equation}
  Let $F$ be a graph in $\mathcal G(D,\Delta,n)$ with vertex set $[n]$ such that
  the natural order on $[n]$ is a $D$-degeneracy order for~$F$.  For each fixed
  $y\in V(F)$, let $H_1(y)$ be the subgraph of $F$ induced by the vertices
  $\{x\in[n]\colon x\le y, \ldist_F(y,x)\le\ell_1\}$. We refer to the vertices of
  $H_1(y)$ at left distance exactly $\ell_1$ from $y$ as the \emph{boundary} of
  $H_1(y)$.  Let~$H_0(y)$ be a connected induced subgraph of~$H_1(y)$ with at most $h_0$ vertices that
  contains $y$ and $\lneigh(y)$ and is such that $\big(H_1(y),V(H_0(y))\big)$ is
  $(D,\mu)$-Spencer. Let $H_0'(y):=H_0(y)-y$ and $H'_1(y):=H_1(y)-y$.  Let
  $\phi\colon V(F)\to[h_1]$ be a map which is injective on $V(H_1(y))$ for each
  vertex $y\in V(F)$.
  
  For the proof of Theorem~\ref{thm:main2}, we assume that a final segment $Q$ of $V(F)$ in degeneracy order is an induced path or cycle. We let $H_1(Q)$ be the subgraph of $F$ induced by the vertices $\{x\in[n]\colon x\not\in Q, \ldist_F(y,x)\le\ell_1\}$. We define as above boundary vertices and $H_0(Q)$, and set $H'_i(Q)=H_i(Q)$ for $i=0,1$, and we insist of $\phi$ in addition that it is injective on $V(H_1(Q))$.
\end{setup}

Note that this setup explains how $H_0'(y)$ is chosen but not why such a choice
is possible; we will show that such a choice is possible (in the proof of Theorem~\ref{thm:main}) and also that $H_0'(y)$
contains none of the boundary vertices of $H_1(y)$.

The choice of $H'_0(y)$ satisfies two requirements of our proof, which we now
explain. The first is that we will see (Lemma~\ref{lem:good-emb} below) that
there are very few $\phi$-partite copies of $H'_0(y)$ which map $\lneigh(y)$ to
an unpromising set. Formalising this we make the following definition. After
the two definitions below we will state the second requirement of $H'_0(y)$, which will
explain why we consider two different graphs $H'_0(y)$ and $H'_1(y)$.

\begin{definition}[promising embeddings]\label{def:goodemb}
  Given a graph $G$ and disjoint sets\\
  $V_1,\dots,V_{h_1}\subset V(G)$ each of size~$N/K$, let~$H_0$ be a
  graph on at most~$h_0$ vertices, let $y\in V(H_0)$, and let $H_0'=H_0-y$.
  Let $\phi\colon V(H_0)\to[h_1]$ be an injection.
  We say
  that a $\phi$-partite embedding~$\psi$ of~$H_0'$ into~$G$ is \emph{$dp$-promising} (for~$y$) if
  \[\Big|N_G\Big(\psi\big(N_{H_0}(y)\big);V_{\phi(y)}\Big)\Big|\ge\tfrac12\big(dp\big)^{\deg_{H_0}(y)}\cdot
  \frac{N}{K}\,,\]
  and \emph{$dp$-unpromising} (for~$y$) otherwise.
\end{definition}

We write $dp$ in the above rather than a single letter for consistency with our
proof. For proving Theorem~\ref{thm:main2}, we further require an analogous
description on embeddings which are not completable embeddings in the following
sense.

\begin{definition}[completable embeddings]\label{def:complemb}
  Given a graph $G$ and disjoint sets $V_1,\dots,V_{h_1}\subset V(G)$ each of
  size~$N/K$, let~$H_0$ be a graph on at most~$h_0$ vertices, let $Q$ be an
  induced subgraph of~$H_0$, and let $H_0'=H_0-Q$.  Let $Z\subset V(G)$ and let
  $\phi\colon V(H_0)\to[h_1]$ be an injection.  We call a $\phi$-partite
  embedding $\psi$ of $H_0'$ into $G$ a \emph{completable embedding} if we can
  extend $\psi$ to an embedding of $H_0$ into $G$, using only vertices outside
  $Z\cup \bigcup_{i\in[h_1]}V_i$ to embed the vertices of~$Q$.
\end{definition}

We are now in a position to explain what exactly `extension of $H'_1(y)$' means
in the proof sketch; this is exactly where the second requirement of~$H'_0(y)$
comes from. In short, given $H'_1(y)$ and $\phi$, and a
current partial embedding $\psi$ of $F$ into $G$, we look at embeddings $\psi'$
of $H'_1(y)$ into $\Gamma$ which are consistent with this embedding and which
restricted to $H'_0(y)$ give a $\phi$-partite embedding to $G$. We should stress
that `consistent' simply means that $\psi$ and $\psi'$ must agree on the
embedding of vertices they both embed. It can be that there is a vertex $w$
embedded by $\psi$ that is not in $H'_1(y)$, which has a neighbour $z$ in
$H'_1(y)$ that is not embedded by $\psi$ (in which case $z$ is a boundary
vertex), but we do \emph{not} require that there is any edge of $G$ or $\Gamma$
between $\psi(w)$ and $\psi'(z)$, so the map $\psi\cup\psi'$ is not necessarily
a homomorphism to $\Gamma$. What $\psi'$ extends is not $\psi$ but the
restriction $\psi|_{V(H'_1(y))}$.

\begin{definition}[promising extension, completion extension]\label{def:badext}
  Let~$F$, $\phi$, $y$, $H'_0(y)$, $H'_1(y)$, $Q$, $ H'_0(Q)$, $H'_1(Q)$ be as in
  Setup~\ref{setup:F}, let~$G$ be a graph, and let $V_1,\dots,V_{h_1}\subset V(G)$
  be disjoint sets of vertices.
  
  Given $x-1\le\max\big(\lneigh(y)\big)$, let $\psi\colon[x-1]\to V(G)$ be a graph
  homomorphism from $F\big[[x-1]\big]$ to~$G$. Suppose that
  $\psi'\colon V\big(H'_1(y)\big)\to V(\Gamma)$ is a graph homomorphism
  from $H_1'(y)$ to~$G$ that agrees with $\psi$ on $V\big(H'_1(y)\big)\cap[x-1]$, and
  suppose furthermore that $\psi'(u)\in V_{\phi(u)}$ for all $u\in
  V\big(H'_0(y)\big)$ and that $\psi'$ is injective. We
  say $\psi'$ is an \emph{$dp$-promising extension for $y$} if the restriction
  $\psi'|_{V(H'_0(y))}$ is a $\phi$-partite embedding of
  $H'_0(y)$ into $G$ which is $dp$-promising for~$y$; otherwise we say it is \emph{$dp$-unpromising}.
  
  We define completable extension analogously. For any $\psi:[x-1]\to V(G)$, an
  extension $\psi'$ of $\psi|_{V(H'_1(Q)}$ which embeds $V(H'_1(Q))$ is a
  \emph{completable extension} if the restriction to $V(H'_0(Q))$ is a
  completable embedding, and otherwise we say it is \emph{uncompletable}.
\end{definition}

The reason for this rather strange definition is that Lemma~\ref{lem:far-away}
below, which shows that when we want to embed a boundary vertex for $y$ we do
not cross off anything for $y$, uses Theorem~\ref{thm:Spencer}, which gives a
count of rooted extensions in $\Gamma$ (but not in subgraphs). The upshot of
applying Theorem~\ref{thm:Spencer} is that while we can ask for a few vertices
near $y$ to be in the set of roots (and $H'_0(y)$ will always be) we need most
vertices of $H'_1(y)$ to be embedded unrestrictedly in $\Gamma$, as
Definition~\ref{def:badext} states. The fact that $H'_0(y)$ is connected, small,
and that $(H'_1(y),H'_0(y))$ is $(D,\mu)$-Spencer both says that it indeed
consists of few vertices near $y$, and that Theorem~\ref{thm:Spencer}, together
with a strong upper bound on unpromising embeddings of $H'_0(y)$, lets us deduce
that there are very few unpromising extensions to a homomorphism of $H'_1(y)$
for the empty partial embedding, before we begin to embed $F$. This is the
second property that we need of $H'_0(y)$: In order to be able to apply
Theorem~\ref{thm:Spencer} we need that $(H'_1(y),H'_0(y))$ is $(D,\mu)$-Spencer,
and for our embedding strategy to work we need that $H'_0(y)$ consists of few
vertices near $y$.

\medskip

We now explain what we require of the colour~$\chi$ in which we want to embed
our graphs~$F$. Briefly, this is that we can (for some bounded $K$) find subsets
$V_1,\dots,V_{h_1}$ of size $N/K$ such that for every possible choice of a
$D$-degenerate $H_0$ with at most $h_0$ vertices, and $\phi$ as in
Definition~\ref{def:goodemb}, there are very few $dp$-unpromising $\phi$-partite
embeddings of $H'_0$ in colour $\chi$, where $H'_0=H_0-y$ for the last
vertex~$y$ of $H_0$ in the degeneracy order. We remark that we will eventually embed all of $F$
into these sets $V_1,\dots,V_{h_1}$. The following Lemma~\ref{lem:good-emb}
states that we can find such a colour $\chi$ and vertex
sets $V_1,\dots,V_{h_1}$. Lemma~\ref{lem:good-emb} further allows us to avoid any given small
set~$Z$ in our promising embeddings and completable embeddings. This is needed
in the proof of Theorem~\ref{thm:main2} in order to avoid previously embedded
components.

\begin{lemma}[promising and completable embeddings]\label{lem:good-emb}
  For every~$r$, $D$, $\mu>0$, $f\colon\nats\to\reals^+$, $h_0$, $h_1$ there are
  $K_0,C>0$ such that for $d=\frac{1}{2r}$ and $p\ge N^{-\frac{1}{D}+\mu}$ the
  random graph $\Gamma=G(N,p)$ a.a.s.\ satisfies the following. For every $r$-colouring
  of $E(\Gamma)$ there is an integer $K\le K_0$ and a colour $\chi$ such that
  for the subgraph~$G$ of~$\Gamma$ formed by all edges of colour~$\chi$ and for
  each $Z\subset V(\Gamma)$ with $|Z|\le NK_0^{-2}$, there are pairwise disjoint
  sets $V_1,\dots,V_{h_1}\subset V(G)\setminus Z$, each of size $\frac{N}{K}$,
  with the following properties.
  \begin{enumerate}[label=\abc]
    \item\label{lem:good-emb:degen} For every $D$-degenerate
      graph~$H_0$ with at most $h_0$ vertices whose final vertex in the degeneracy
      order is~$y$, for $H_0'=H_0-y$, and for any injection $\phi\colon
      V(H_0)\to[h_1]$, the number of $\phi$-partite embeddings
        of~$H_0'$ into~$G$ which are $dp$-unpromising for~$y$ is at most
      \[f(K)\cdot p^{e(H_0')}\Big(\frac{N}{K}\Big)^{v(H_0')}\,.\]
    \item\label{lem:good-emb:complete}
      For any connected graph~$H_0$ with at most $h_0$ vertices and
      $\Delta(H_0)\le D+1$, with an induced subgraph~$Q$ such that either~$Q$ is
      an induced cycle with at most $2\mu^{-1}$ vertices, or~$Q$ is an induced
      path with exactly $2\mu^{-1}$ vertices, for $H_0'=H_0-V(Q)$, and for any
      injection $\phi:V(H_0)\to[h_1]$ the following holds.
      If either $D=2$ and $H_0\neq K_4$, or $D\ge3$, then the number of $\phi$-partite embeddings of $H_0'$ which are
      not completable embeddings is at most
      \[f(K)\cdot p^{e(H')}\Big(\frac{N}{K}\Big)^{v(H')}\,.\]
    \item\label{lem:good-emb:k4} If $p\ge C N^{-2/5}$, then $G$ contains a copy of $K_4$ whose vertices
      are not in $Z$.
  \end{enumerate}
\end{lemma}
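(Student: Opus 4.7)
The plan is to first apply the sparse regularity lemma~\ref{lem:ssrl} to $\Gamma$ equipped with the given $r$-colouring, choosing the regularity parameter $\eps$ small enough in terms of $r,D,\mu,h_0,h_1,f$ and an upper bound $K_0$ on the number of clusters produced. In the resulting multi-coloured reduced graph, for each pair of clusters the densest colour carries at least a $1/r$ fraction of the $\Gamma$-density. A pigeonhole and Ramsey argument on this reduced graph---discarding clusters with too many irregular pairs and locating a monochromatic clique of size $h_1$ in some colour $\chi$ of density at least $dp=p/(2r)$ on every pair---will produce the required $V_1,\dots,V_{h_1}$, after a standard clean-up that removes $Z$ and a small set of vertices with anomalous colour-$\chi$ degrees while keeping each cluster of size $N/K$ for some $K\le K_0$.

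For part~\ref{lem:good-emb:degen}, I would let $\tilde H$ denote the graph formed by two copies of $H_0$ glued along $H_0'$ (equivalently, $H_0$ with a twin $y^\star$ of $y$ sharing its neighbourhood). Both $H_0$ and $\tilde H$ are $D$-degenerate, so Lemma~\ref{lem:2densities}\ref{itm:2dens:degen} gives $m_2(H_0),m_2(\tilde H)\le D$, and the counting lemma~\ref{lem:counting} applies for $p\ge N^{-1/D+\mu}$. Writing $X(\psi)=|N_G(\psi(N_{H_0}(y));V_{\phi(y)})|$ for a $\phi$-partite embedding $\psi$ of $H_0'$ into $G$, and extending $\phi$ by $\phi(y^\star)=\phi(y)$, the sums $\sum_\psi X(\psi)$ and $\sum_\psi X(\psi)^2$ count $\phi$-partite embeddings of $H_0$ and of $\tilde H$ respectively, both within a $1\pm o(1)$ factor of their expected values. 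This forces $\Exp[X^2]/\Exp[X]^2=1+o(1)$, so Chebyshev's inequality bounds the fraction of $\psi$ with $X(\psi)<\tfrac12(dp)^{\deg_{H_0}(y)}N/K$ by $o(1)$. Since $d^{e(H_0')}$ is a constant depending only on $r$ and $h_0$, choosing $\eps$ small enough converts this into the desired bound $f(K)p^{e(H_0')}(N/K)^{v(H_0')}$ on the number of $dp$-unpromising embeddings.

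Part~\ref{lem:good-emb:complete} will follow by the same template with $\tilde H$ replaced by the duplicate graph $H_0^+$ of Lemma~\ref{lem:2densities}\ref{itm:2dens:cycle}--\ref{itm:2dens:path}: the hypotheses on $D$ and $H_0\neq K_4$ ensure $m_2(H_0)\le D$ via parts~\ref{itm:2dens:notK4}--\ref{itm:2dens:bigD}, and the length restriction on $Q$ combined with parts~\ref{itm:2dens:cycle}--\ref{itm:2dens:path} gives $m_2(H_0^+)\le D/(1-\mu)$, which for $p=N^{-1/D+\mu}$ still admits the counting lemma. The quantity $X(\psi)$ here counts $H_0$-extensions of $\psi$ whose $V(Q)$-image lies outside $Z\cup\bigcup V_i$; this mixed regime (cluster for $H_0'$, ``outside'' for $V(Q)$) I would handle by combining the counting lemma on the cluster side with Corollary~\ref{cor:Spencer} and the star property (Lemma~\ref{lem:typical}) to count outside extensions, using $|Z|+|\bigcup V_i|=O(N/K_0)$ to discard the negligible contribution of extensions meeting the forbidden set. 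The same variance and Chebyshev step then controls the number of non-completable embeddings. Part~\ref{lem:good-emb:k4} is a direct consequence of $m_2(K_4)=5/2$ (Lemma~\ref{lem:2densities}\ref{itm:2dens:K4}): choosing $C$ large enough, the counting lemma produces $\Theta\big((dp)^6(N/K)^4\big)$ copies of $K_4$ inside $G[V_1\cup\cdots\cup V_{h_1}]$, of which an overwhelming majority avoid $Z$.

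The main technical obstacle is the mixed cluster/outside counting required for part~\ref{lem:good-emb:complete}: the sparse counting lemma is naturally stated for embeddings entirely within a fixed regular partition, whereas completability demands extensions to vertices \emph{outside} those clusters. Reconciling this via Spencer-type counts in $\Gamma$ paired with density arguments in $G$ will need the slack reserved by the factor $1-\mu$ in the bound on $m_2(H_0^+)$, and tracking that slack consistently through both the first- and second-moment computations is where the bulk of the remaining care will lie.
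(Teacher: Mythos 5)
The central step you gloss over — ``choosing $\eps$ small enough converts this into the desired bound $f(K)p^{e(H_0')}(N/K)^{v(H_0')}$'' — hides a genuine circularity, and this is precisely the obstacle the paper highlights before Lemma~\ref{lem:ssrl}. The counting lemma error $\delta$ is a function of the regularity parameter $\eps$, while the actual number of clusters $K$ is only known a posteriori to lie between $k_0$ and some $K_0(\eps)$; so to make $\delta(\eps)\le f(K)$ you would need $\delta(\eps)\le f(K_0(\eps))$. Since $f$ may be an arbitrary (in the main application, arbitrarily fast-decaying) function of $K$, and $K_0(\eps)$ grows tower-fast as $\eps\to0$, no choice of $\eps$ makes this hold for all admissible $f$. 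You cite Lemma~\ref{lem:ssrl} but describe a single-partition argument; the whole point of the strengthened lemma is to produce a coarse partition $\cP_c$ and a fine equitable refinement $\cP_f$, with $\cP_f$ regular at scale $g(|\cP_c|)$ depending on the coarse cluster count. The $V_i$ are built from fine parts inside $h_1$ good coarse parts (found by Tur\'an plus Ramsey), discarding the few fine parts that are bad or too $Z$-laden, and the bound $f(K)$ in the conclusion is with $K\approx 2|\cP_c|$ — so the fine regularity parameter $g(K)$ can be taken to beat $f(K)$ by construction, with no circularity. You then need the double-count over fine parts $V_y$ to pass from ``poor with respect to a given $V_y$'' (the Chebyshev estimate, which is the paper's Lemma~\ref{lem:bademb}) to ``unpromising'', since unpromising is a statement about the whole set $V_{\phi(y)}$ rather than a single fine part.

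Your second-moment idea (twin vertex $y^\star$, or duplicated $Q$, then Chebyshev) is essentially identical to the paper's Lemmas~\ref{lem:bademb} and~\ref{lem:badcomp}, and your use of Lemma~\ref{lem:2densities} to check the $2$-density bounds is correct. However, for part~\ref{lem:good-emb:complete} you propose a mixed cluster/outside argument via Spencer's theorem and the star property, which you yourself flag as the hard part. The paper avoids this entirely: when constructing the $V_i$ it deliberately uses only half the non-bad fine parts in each chosen coarse cluster, so plenty of cleaned-up, $Z$-free fine parts remain available inside the coarse clusters as targets for the $Q$-vertices. Extending into those leftover fine parts keeps everything within the regularity framework and lets Lemma~\ref{lem:badcomp} handle the count directly; a non-completable embedding is in particular a noncompletion embedding with respect to those chosen parts, so the bound transfers. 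This is cleaner and sidesteps the ``tracking the slack through Spencer'' difficulty you anticipated.
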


We prove this lemma in Section~\ref{sec:goodembs}. Its proof uses a strengthened
version of the sparse regularity lemma and the sparse counting lemma. The former
is needed in order that we can obtain $f(K)$ depending arbitrarily on $K$; the
usual sparse regularity lemma would not allow this. One should think of $f(K)$
as being very tiny compared to $r^{-1}$, $D^{-1}$, $\mu$ and $K$, which we need in our proofs.

\medskip

In terms of justifying the sketch proof, the lemma just given identifies for us
a colour $\chi$ and a $h_1$-partite subgraph of $\Gamma$ of colour $\chi$ edges,
which we call $G$, into which we will eventually embed $F$. Furthermore, using
this lemma as well as Theorem~\ref{thm:Spencer}, we can easily obtain the
desired statement that before we commence the embedding for every $y\in V(F)$
only a tiny fraction of extensions to a homomorphism of $H'_1(y)$ are
unpromising.

We now need to justify that as we embed vertices we can avoid causing the
fraction of unpromising extensions of $\psi|_{V(H'_1(y))}$ to increase too much. As
mentioned, when we embed a vertex $x$ whose distance from $y$ is bigger than
$\ell_1$, by definition the fraction is unchanged. The following lemma deals
with vertices $x$ whose distance is exactly $\ell_1$ to $y$, and states that no
matter where we embed them, they do not increase the fraction of unpromising
embeddings for~$y$ by a large factor. We prove this lemma in
Section~\ref{sec:far-away}.

\begin{lemma}[vertices with left distance~$\ell_1$ have few unpromising extensions]
\label{lem:far-away}
  Let $F$, $\phi$, $y$, $H_0(y)$, $H_1(y)$, $Q$, $ H'_0(Q)$, $H'_1(Q)$ and $h_1$, and $\ell_1$ be as in Setup~\ref{setup:F}.
  Let~$I$ denote the first~$q$ vertices of~$H_1(y)$.
  
  Let~$\Gamma$ be an $N$-vertex graph that has a subgraph~$G$ with
  disjoint vertex sets $V_1,\dots,V_{h_1}\subset V(G)$ each of size $N/K$ such that
  the following properties are satisfied.
  \begin{enumerate}[label=\itmarab{S}]
  \item\label{itm:far:neigh} $\Gamma$ has the $(D,\tfrac1{10},p)$-neighbourhood property.
  \item\label{itm:far:Spencer} For every $T\subset V(H_1(y))$ and $R=I\cup T$ such that $(H'_1(y),R)$ is
    $(D,\mu)$-Spencer, and for every injective $\pi\colon R\to V(\Gamma)$,
    the number of $H'_1(y)$-extensions of $\pi$ 
    in $\Gamma$ is
    \[\big(1\pm\tfrac1{10}\big)N^{|U\setminus R|}p^{e(U\setminus
        R)+e(U,R)}\,,\]
    where $U=V\big(H'_1(y)\big)$ and we count edges in $U\setminus R$ and between~$U$ and~$R$ in ${H'_1(y)}$.
  \end{enumerate}
  Suppose that $x\in V(F)$ is the $q$-th vertex of $H'_1(y)$, that
  $\ldist_F(x,y)=\ell_1$, and that $\psi$ is a $\phi$-partite homomorphism from
  $F\big[[x-1]\big]$ to~$G$. Let $B$ be the number of $dp$-unpromising
  extensions of~$\psi$ for~$y$, and let $X=\psi\big(\lneigh_F(x)\cap
  V(H'_1(y))\big)$.
  Then for all $v\in N_G(X)\cap V_{\phi(x)}$, the embedding $\psi\cup\{x\to v\}$
  has at most $\frac{3B}{2Np^{|X|}}$ extensions that are $dp$-unpromising
  for~$y$.
  
  Furthermore, the same statement holds if we replace $dp$-unpromising for $y$ with uncompletable and ask that $\ldist(x,Q)=\ell_1$.
\end{lemma}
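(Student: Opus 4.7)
The plan is to apply the Spencer-type extension count from hypothesis \ref{itm:far:Spencer} with the root $R := I \cup L_{\mathrm{new}}$, where $L_{\mathrm{new}} := \lneigh_F(y) \setminus I$ denotes the set of left-neighbors of $y$ not yet embedded by $\psi$. First, I verify that $(H'_1(y), R)$ is $(D,\mu)$-Spencer: since $I$ is an initial segment of the $D$-degeneracy order of $H'_1(y)$, summing left-degrees gives $e(X) + e(X, I) \le D|X|$ for every nonempty $X \subset V(H'_1(y)) \setminus I$, so $(H'_1(y), I)$ is $(D,\mu)$-Spencer, and enlarging the root to $I \cup L_{\mathrm{new}}$ only decreases the Spencer density. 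The crucial geometric observation is that $x$ has no $H'_1(y)$-neighbor in $L_{\mathrm{new}}$: if $v \in L_{\mathrm{new}}$ were adjacent to $x$ in $F$, then $v \in \lneigh_F(y)$ would give $yv \in E(F)$ and $v > x$ would hold (since $v \in V(H'_1(y)) \setminus I$), so $y > v > x$ would form a descending path of length $2$ in $F$, contradicting $\ldist_F(y,x) = \ell_1 \ge 20$.

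Given these two ingredients, for each $v \in N_\Gamma(X)$ and each consistent $\phi$-partite injection $\sigma: L_{\mathrm{new}} \to V(\Gamma)$ (one that realizes in $\Gamma$ all $H'_1(y)$-edges incident to $L_{\mathrm{new}}$ and maps to the appropriate partition sets), the combined map $\pi_{v,\sigma} := \psi|_{I \setminus \{x\}} \cup \{x \to v\} \cup \sigma$ is an injection $R \to V(\Gamma)$ whose root-edges are all realized in $\Gamma$ (using $v \in N_\Gamma(X)$ and the neighbor-free observation). By hypothesis \ref{itm:far:Spencer}, the number of $H'_1(y)$-extensions of $\pi_{v,\sigma}$ in $\Gamma$ equals $M = (1 \pm \tfrac{1}{10}) N^{|U \setminus R|} p^{e(U \setminus R) + e(U, R)}$, which is the same value, up to the multiplicative $(1 \pm \tfrac{1}{10})$ factor, for every $v$ and $\sigma$. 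Since consistency of $\sigma$ and unpromisingness of $\sigma \cup \psi|_{\lneigh_F(y) \cap I}$ depend only on $\sigma$ and the fixed $\psi$, not on $v$, summing over the set $\mathcal{S}$ of consistent unpromising $\sigma$ gives $B_v = M \cdot |\mathcal{S}|$ with $|\mathcal{S}|$ not depending on $v \in N_\Gamma(X)$.

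Every unpromising extension of $\psi$ has $\psi'(x) \in N_\Gamma(X)$, and partitioning by this value yields $B = \sum_{v \in N_\Gamma(X)} B_v = |N_\Gamma(X)| \cdot M \cdot |\mathcal{S}|$. The neighborhood property \ref{itm:far:neigh} gives $|N_\Gamma(X)| = (1 \pm \tfrac{1}{10}) N p^{|X|}$, and combining this with the Spencer $(1\pm\tfrac{1}{10})$ error produces
\[
B_v \;=\; \frac{B}{|N_\Gamma(X)|} \;\le\; \frac{1+1/10}{(1-1/10)^2} \cdot \frac{B}{Np^{|X|}} \;<\; \frac{3B}{2Np^{|X|}},
\]
as required; this applies in particular for each $v \in N_G(X) \cap V_{\phi(x)} \subset N_\Gamma(X)$. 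The uncompletable case follows by the same argument with $Q$ playing the role of $y$: the descending-path obstruction using $\ldist_F(x, Q) = \ell_1$ once again rules out any $H'_1(Q)$-edge between $x$ and the analogous set $L_{\mathrm{new}}^Q$, after which the proof runs unchanged. The main delicacy is the neighbor-free observation, since without it the Spencer count for $B_v$ would pick up an extra factor $p^{-k}$ with $k = \deg_{H'_1(y)}(x, L_{\mathrm{new}})$, spoiling the uniformity in $v$ that is essential for comparing $B_v$ and $B$.
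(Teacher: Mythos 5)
Your proposal has two genuine gaps, both concerning the choice of root set $R=I\cup L_{\mathrm{new}}$.

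First, the claim that ``enlarging the root to $I\cup L_{\mathrm{new}}$ only decreases the Spencer density'' is false. When you pass from $R$ to $R'=R\cup W$, the candidate sets $X$ range over the smaller family $V(H'_1(y))\setminus R'$, but for each such $X$ the quantity $e(X)+e(X,R')$ can only grow, since $e(X,W)\ge 0$ is added. The two effects pull in opposite directions, and in general the maximum goes up. Concretely, take $D=2$ and a vertex $c\notin I\cup\lneigh(y)$ with two left-neighbours in $I$ which is also a left-neighbour of two vertices $a,b\in L_{\mathrm{new}}=\lneigh(y)\setminus I$; then for $X=\{c\}$ one has $e(X)+e(X,I\cup L_{\mathrm{new}})\ge 4>(2+\mu)|X|$, so $(H'_1(y),I\cup L_{\mathrm{new}})$ is not $(D,\mu)$-Spencer and hypothesis~\ref{itm:far:Spencer} does not apply. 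This is exactly the difficulty that Lemma~\ref{lem:findroots} is designed to handle: the root set must be grown to a carefully chosen superset $T'$ (the paper applies Lemma~\ref{lem:findroots} with $T=V(H'_0(y))$ to produce $H^\ast$ with $(H'_1(y),I\cup V(H^\ast))$ Spencer). You cannot simply take $\lneigh(y)$.

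Second, the decomposition $B_v=M\cdot|\mathcal{S}|$ with $\mathcal{S}$ independent of $v$ relies on the claim that whether an extension $\psi'$ is $dp$-unpromising is determined by $\sigma=\psi'|_{L_{\mathrm{new}}}$ together with $\psi$. By Definition~\ref{def:badext}, however, unpromisingness of $\psi'$ is determined by $\psi'|_{V(H'_0(y))}$: one must check both that this restriction is a $\phi$-partite embedding of $H'_0(y)$ into $G$ (i.e.\ the $H'_0(y)$-edges land in $G$, not merely $\Gamma$) and that it satisfies the common-neighbourhood bound. The set $V(H'_0(y))$ is, as a rule, a strict superset of $\lneigh(y)$ (it is produced by Lemma~\ref{lem:findroots} with $T=\lneigh(y)$ precisely because $\lneigh(y)$ alone need not be Spencer relative to $H'_1(y)$), and its extra vertices need not lie in $I$. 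Consequently, for a fixed $\sigma$ and $v$, the extensions counted by $M$ split into both promising and unpromising ones, and $|\mathcal{S}|$ does not capture the count. In the paper's proof the roots are $I\cup V(H^\ast)$ with $V(H^\ast)\supset V(H'_0(y))$, which both restores the Spencer property and makes promisingness a function of the root embedding $h$; that is why $H^\ast$ (rather than $L_{\mathrm{new}}$) is essential. Your geometric observation that $x$ has no $H'_1(y)$-neighbour in the roots is correct in spirit and does feature in the paper's argument, but it is applied to $V(H^\ast)$, not $\lneigh(y)$, and on its own it does not rescue the two issues above.
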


Finally, we said that a counting argument deals with vertices $x$ at distance
less than $\ell_1$ to $y$. We will fill in the numbers in the proof of
Theorem~\ref{thm:main}, but the idea is the following. Before we embed $x$, we
have a candidate set to which we will embed $x$, which is the $G$-common
neighbourhood of $\psi(\lneigh(x))$ in $V_{\phi(x)}$. The extensions of
$\psi|_{V(H'_1(y))}$, on the other hand, can map $x$ to any vertex of the
$\Gamma$-common neighbourhood of $\psi(\lneigh(x))$ (and nowhere else). The
latter set is only a constant factor bigger than the former. The unpromising
extensions of $\psi|_{V(H'_1(y))}$ are partitioned into parts according to where
they map $x$. Each crossed-off vertex corresponds to a large part; since we have
inductively avoided having many unpromising extensions of $\psi|_{V(H'_1(y))}$,
there can only be few crossed-off vertices. If $x$ were a boundary vertex, this
argument would cease to be work: every vertex of $\Gamma$ could be the image of
$x$ under some extensions of $\psi|_{V(H'_1(y))}$ because the left-neighbours
of~$x$ are not in $H'(y)$, but the candidate set of $x$ remains the $G$-common
neighbourhood of $\psi(\lneigh(x))$ in $V_{\phi(x)}$. But for boundary
vertices~$x$ we can apply Lemma~\ref{lem:far-away} instead.

\medskip

At this point, we have (modulo the detailed calculations) described how to
construct homomorphisms of $F$ into $G$. This construction is robust in that at
each step, we have many possible choices for the next embedding. Furthermore,
the set of choices is locally determined: having fixed $V_1,\dots,V_{h_1}$ and
$\phi$, in order to know where we could embed any given $x$, we need to know
what the candidate set of $x$ is (which is determined by the embedding of
$\lneigh(x)$) and which vertices are crossed off (which depends only on the
embedding of vertices at distance at most $2\ell_1$ from $x$). Our final lemma,
given by a method developed by Nenadov~\cite{CN}, states that such a robust and
locally determined collection of homomorphisms into a typical random graph
necessarily includes an embedding.

Though this lemma is inherent in~\cite{CN}, it is not made explicit there (but
rather the underlying method for a longer proof), and so in order to be
self-contained we provide a proof in Section~\ref{sec:CN}.

\begin{lemma}[homomorphisms imply embedding]
  \label{lem:CN}
  Given integers $D$, $L$ and $\Delta$, and a constant $\rho>0$, for all
  sufficiently large $N$ the following holds. Let $p\ge \big(\tfrac{10^8DL\log^2
    N}{\rho^2 N}\big)^{1/D}$ and let $n\le 10^{-6}\rho^2N$.
  Let $F$ be a graph in $\mathcal G(D,\Delta,n)$ with vertex set $[n]$ such that
  the natural order on $[n]$ is a degeneracy order for~$F$.
  Let $\Gamma$ be an $N$-vertex graph with the $(D,\frac{\rho}{1000},p)$-star property.
  Let $\Psi_0$ be the set containing the trivial homomorphism from the empty graph
  $F[\emptyset]$ to $\Gamma$.
  For each $x\in[n]$ let $S_x\subset [x-1]$ be a set and
  let $\Psi_x$ be a collection of homomorphisms from $F\big[[x]\big]$ to
  $\Gamma$ and for each $\psi\in\Psi_x$ let $W_\psi$ be a set of vertices
  $v$ of $\Gamma$.
  Suppose that the following holds for each $x\in[n]$ and each $\psi,\psi'\in\Psi_{x-1}$.
 \begin{enumerate}[label=\itmarab{P}]
  \item\label{itm:CN:ext} $\psi\cup\{x\mapsto v\}\in \Psi_x$ for each $v\in W_\psi$.
  \item\label{itm:CN:big} $|W_{\psi}|\ge\rho p^{\ldeg_F(x)}N$.
  \item\label{itm:CN:S} $|S_{x}|\le L$. 
  \item\label{itm:CN:bound} If $\psi|_{S_{x}}=\psi'|_{S_{x}}$, then $W_{\psi}=W_{\psi'}$.
 \end{enumerate}
 Then there exists $\psi\in\Psi_n$ which is injective.
\end{lemma}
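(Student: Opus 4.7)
The plan is to construct an injective $\psi\in\Psi_n$ by a random greedy embedding in the given degeneracy order of $F$. At each step $x$, given the partial homomorphism $\psi_{x-1}$ built from the previous random choices $v_1,\dots,v_{x-1}$, pick $v_x$ uniformly at random from $W_{\psi_{x-1}}$; by \ref{itm:CN:ext} this produces $\psi_x\in\Psi_x$. The goal is to show that with positive probability all the $v_x$ are pairwise distinct. It suffices to prove that at every step $|U_{x-1}\cap W_{\psi_{x-1}}|\le\tfrac12|W_{\psi_{x-1}}|$, writing $U_{x-1}=\{v_1,\dots,v_{x-1}\}$, since then we may resample $v_x$ uniformly from the nonempty set $W_{\psi_{x-1}}\setminus U_{x-1}$ and continue.

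The main step is concentration of $|U_{x-1}\cap W_{\psi_{x-1}}|$ about its expectation. Here condition \ref{itm:CN:bound} is crucial: $W_{\psi_{x-1}}$ depends on $\psi_{x-1}$ only through its restriction to $S_x$, and $|S_x|\le L$ by \ref{itm:CN:S}. I would therefore fix any value $\sigma$ of $\psi_{x-1}|_{S_x}$ (at most $N^L$ choices), so that the corresponding $W_\sigma$ is a deterministic subset of $V(\Gamma)$, contained in the common $\Gamma$-neighbourhood $N_\Gamma(\sigma(\lneigh_F(x)))$ by \ref{itm:CN:ext}. Indices $y\in S_x$ contribute trivially at most $L$ to $|U_{x-1}\cap W_\sigma|$. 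For $y\notin S_x$, $v_y$ is uniform on $W_{\psi_{y-1}}\subset N_\Gamma(\psi(\lneigh_F(y)))$ given the earlier history, and the $(D,\rho/1000,p)$-star/neighbourhood property of $\Gamma$ controls $|W_{\psi_{y-1}}\cap W_\sigma|$ via the size of the joint $\Gamma$-neighbourhood of $\psi(\lneigh_F(y))\cup\sigma(\lneigh_F(x))$. This yields the conditional expectation bound $\mathbb{E}[\mathbf{1}[v_y\in W_\sigma]\mid\text{history}]\le(1+o(1))|W_\sigma|/N$, hence $\mathbb{E}[|U_{x-1}\cap W_\sigma|]\le L+(1+o(1))(n/N)|W_\sigma|\le 10^{-5}\rho^2|W_\sigma|$, using $n\le 10^{-6}\rho^2N$.

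Since $|W_\sigma|\ge\rho p^{\ldeg_F(x)}N\ge 10^8DL\log^2 N/\rho$ is polylogarithmic in $N$, a Freedman-type martingale concentration inequality applied to the sum $\sum_{y<x,\,y\notin S_x}\mathbf{1}[v_y\in W_\sigma]$, with martingale differences revealed in degeneracy order, gives that a deviation to $\tfrac12|W_\sigma|$ occurs with probability at most $\exp(-\Omega(\rho p^D N))=N^{-\omega(1)}$. A union bound over the $n\le N$ steps and the at most $N^L$ conditioning values $\sigma$ then completes the argument. The main obstacle I foresee is precisely the intertwining of the random target set $W_{\psi_{x-1}}$ with the random history: one must expose the randomness delicately and use \ref{itm:CN:bound} to decouple $W_{\psi_{x-1}}$ from the $v_y$ with $y\notin S_x$, and one must invoke the star property in its full disjoint-families form---not merely the neighbourhood property---to ensure that the candidate neighbourhoods $W_{\psi_{y-1}}$ are spread uniformly enough to deliver the $|W_\sigma|/N$ conditional expectation in all but negligibly many cases.
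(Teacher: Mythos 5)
Your approach — a random greedy embedding with resampling to avoid collisions, analysed via a union bound over the at most $N^L$ determining restrictions $\sigma=\psi|_{S_x}$ — is genuinely different from the paper's. The paper instead splits $V(\Gamma)$ randomly into $\lceil\log N\rceil$ parts of geometrically shrinking size, then runs a \emph{deterministic} greedy embedding placing each $v_x$ in the smallest-indexed part where $W_{\psi_{x-1}}$ still has unused vertices; injectivity is automatic, and the heart of the argument (Claim~\ref{cl:nofill}) uses the star property to bound how many vertices can ever overflow into each part. That strategy is designed precisely to sidestep the issue that defeats a naive random greedy argument: the expected number of collisions over the whole process is of order $n^2/(\rho p^D N)$, which tends to infinity under the lemma's hypotheses, so a pairwise first-moment bound cannot work, and something structural is required.

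The specific gap in your proposal is the conditional expectation bound
$\Exp\big[\mathbf{1}[v_y\in W_\sigma]\mid\text{history}\big]\le(1+o(1))|W_\sigma|/N$.
You have $v_y$ uniform on $W_{\psi_{y-1}}\subset N_\Gamma\big(\psi(\lneigh_F(y))\big)$ with $|W_{\psi_{y-1}}|\ge\rho p^{\ldeg_F(y)}N$, and $W_\sigma\subset N_\Gamma\big(\sigma(\lneigh_F(x))\big)$ with $|W_\sigma|\asymp p^{\ldeg_F(x)}N$. The intersection is controlled by $N_\Gamma\big(\psi(\lneigh_F(y))\cup\sigma(\lneigh_F(x))\big)$, a joint neighbourhood of up to $2D$ vertices. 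But $\Gamma$ is only assumed to have the $(D,\rho/1000,p)$-star property, which controls joint neighbourhoods of sets of size at most $D$ and unions of neighbourhoods over families of pairwise disjoint $k$-sets with $k\le D$; it says nothing about a single joint neighbourhood of more than $D$ vertices. The best bound available from the hypotheses is thus
$\Exp[\mathbf{1}[v_y\in W_\sigma]\mid\text{history}]\le\frac{(1+o(1))p^D N}{\rho p^{\ldeg_F(y)}N}=\frac{1+o(1)}{\rho}\,p^{D-\ldeg_F(y)}$,
obtained by intersecting with the common neighbourhood of any $D$-subset containing $\psi(\lneigh_F(y))$. Whenever $\ldeg_F(x)+\ldeg_F(y)>D$ this exceeds $|W_\sigma|/N\asymp p^{\ldeg_F(x)}$ by a factor of order $p^{-(\ldeg_F(x)+\ldeg_F(y)-D)}$, which diverges. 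There can be $\Theta(n)$ such indices $y$ (for instance when $F$ has linearly many vertices of left-degree $D$), so the resulting expectation bound $\Exp[|U_{x-1}\cap W_\sigma|]\le 10^{-5}\rho^2|W_\sigma|$ does not follow. Your closing appeal to ``the star property in its full disjoint-families form'' does not close this gap: the star property bounds the \emph{union} $\bigcup_{B\in\mathcal B}N_\Gamma(B)$ over at most $\tfrac{\rho}{1000}p^{-k}$ pairwise disjoint $k$-sets, which is a much smaller family than the $\Theta(n)$ sets $\psi(\lneigh_F(y))$, and in any case it does not yield pointwise bounds on $|N_\Gamma(B_y)\cap W_\sigma|$ for an individual $y$. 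A secondary issue is that the resampling step changes the distribution of $v_x$ from uniform on $W_{\psi_{x-1}}$ to uniform on $W_{\psi_{x-1}}\setminus U_{x-1}$; once you do this for one $x$, the martingale analysis for later steps (which relies on each $v_y$ being uniform on $W_{\psi_{y-1}}$) no longer applies as stated, so even if the expectation bound held, the two phases of your argument would need to be coupled more carefully.
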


Observe that although this lemma does not mention the coloured subgraph $G$ of
$\Gamma$, our homomorphism construction in fact creates homomorphisms to $G$ and
so the injective homomorphism the lemma returns is the desired embedding into
$G$.

What remains is to prove the lemmas and formalise the intuition above, which as
remarked above we do in the following sections. We give the proof of our main
theorems in the following Section~\ref{sec:mainproof}. In
Section~\ref{sec:goodembs} we prove Lemma~\ref{lem:good-emb}; in
Section~\ref{sec:CN} we prove Lemma~\ref{lem:CN}, and in
Section~\ref{sec:far-away} we prove Lemma~\ref{lem:far-away}.

\section{Proof of the main theorems}
\label{sec:mainproof}

In this section we first show how the lemmas from the last section together with the tools
from Sections~\ref{sec:Gnp} and~\ref{sec:findroots} imply Theorem~\ref{thm:main}.

\begin{proof}[Proof of Theorem~\ref{thm:main}]
  Given $r,D,\Delta,\mu$, let $h_0,\ell_1,h_1$ be as in \eqref{eq:consts}.
  Let $L=2\Delta^{\ell_1}\cdot h_1$ and $d=\frac1{2r}$.
  Choose
  \[f(K)=\Big(\frac{d^D}{20h_1K}\Big)^{h_1+1}\,,\]
  and let~$K_0$ be as given by Lemma~\ref{lem:good-emb} for $r$, $D$, $\mu$, $f$, $h_0$, $h_1$
  (the constant~$C$ of Lemma~\ref{lem:good-emb} is irrelevant for this proof). Let
  $\rho=\frac{d^D}{4K_0}$ and $c=10^{-6}\rho^2$.

  Let $\Gamma=G(N,p)$ be a random graph with~$N$ sufficiently large and with
  $p=N^{-\frac1D+\mu}$. Asymptotically almost surely~$\Gamma$ satisfies
  \begin{enumerate}[label=\itmarab{PR}]
  \item\label{itm:main:typical}
    the $(D,\tfrac1{10},p)$-neighbourhood property 
    and the $(D,\frac{\rho}{1000},p)$-star
    property,
  \item\label{itm:main:good-emb} the conclusion~\ref{lem:good-emb:degen} of Lemma~\ref{lem:good-emb}
    applied with $r,D,\mu,f,h_0,h_1$, and
  \item\label{itm:main:Spencer} the conclusion of Corollary~\ref{cor:Spencer}
    applied with $D,\mu,\tfrac1{10}$, for every graph~$H$ on at most~$h_1$ vertices and
    every set $R\subsetneq V(H)$ such that $(H,R)$ is $(D,\mu)$-Spencer,
  \end{enumerate}
  where~\ref{itm:main:typical} follows from Lemma~\ref{lem:typical} applied with
  $\frac{\rho}{1000}$.

  Now assume we are given an $r$-colouring
  of~$\Gamma$. By~\ref{itm:main:good-emb}, there is an integer $K\le K_0$ and a
  colour~$\chi$ such that for the subgraph~$G$ of~$\Gamma$ formed by all edges of
  colour~$\chi$ there are pairwise disjoint sets $V_1,\dots,V_{h_1}\subset
  V(G)$, each of size $\frac{N}{K}$ such that
  Lemma~\ref{lem:good-emb}\ref{lem:good-emb:degen} holds (where we set
  $Z=\emptyset$).
  
  We set $\kappa=\frac{d^D}{20h_1 K}$.  
  
  Our goal is to show that each $F\in\mathcal G(D,\Delta,n)$ can
  be embedded in $G[\bigcup_i V_i]$, where $n=cN$. So fix one such graph~$F$,
  assume that it has vertex set $[n]$ and that the natural order of this vertex
  set is $D$-degenerate.

  Observe first that since $h_1=2\Delta^{\ell_1}$ we can use
  Theorem~\ref{thm:HajSze} applied with $\ell=\ell_1$  to
  conclude that there is an equipartition of $V(F)$ into sets
  $X_1,\dots,X_{h_1}$ such that $\dist(x,x')>\ell_1$ for each pair of vertices
  $x\neq x'$ in $X_i$ for each~$i$.  We shall embed each~$X_i$ into~$V_i$, that
  is, we shall only consider $\phi$-partite homomorphisms from~$F$ to~$G$ where
  $\phi(x)=i$ for each $x\in X_i$. Further, for each vertex $y\in V(F)$ let
  $H_1(y)$ and $H'_1(y)$ be the subgraphs of~$F$ as defined in
  Setup~\ref{setup:F}, that is, $H_1(y)$ is the subgraph of~$F$ induced by~$y$
  and all vertices preceding~$y$ of left distance at most~$\ell_1$
  from~$y$. Observe that $v(H_1)\le h_1$.

  We next
  explain how to choose the subgraph~$H'_0(y)$ that also appears in this setup.
  Indeed, let $T=\lneigh(y)$. By Lemma~\ref{lem:findroots} applied with
  $I=\emptyset$ there is a set~$T'\subset V(H'_1(y))$ with $T\subset T'$ such
  that $(H'_1(y),T')$ is $(D,\mu)$-Spencer and $|T'|\le D^2|T|^2\mu^{-1}\le
  D^4\mu^{-1}\le h_0$.  We let $H'_0(y)$ be the subgraph of~$H'_1(y)$ induced by $T'$
  and $H_0(y)$ be the subgraph of~$H_1(y)$ induced by $T'\cup\{y\}$.
  
  We shall now, inductively, construct the families~$\Psi_x$ of $\phi$-partite
  homomorphisms from $F\big[[x]\big]$ to~$G$ required by Lemma~\ref{lem:CN},
  which we seek to apply with constants~$D$, $L$, $\Delta$ and~$\rho$.  Observe
  that we can apply this lemma, since $p=N^{-\frac1D+\mu}$ is sufficiently large when~$N$ is
  large and since $F$ has $cN=10^{-6}\rho^2N$ vertices.  As in Lemma~\ref{lem:CN},
  we let $\Psi_0$ only contain the trivial homomorphism from the empty graph
  to~$G$. When constructing~$\Psi_x$ with $x\ge 1$, again with the setup of
  Lemma~\ref{lem:CN} in mind, we shall construct for each $\psi\in\Psi_{x-1}$ an
  extension set $W_\psi\subset V_{\phi(x)}$. To this end, we let
  \begin{equation}\label{eq:main:W'def}
    W'_\psi=\Big\{v\in V_{\phi(x)}:v\in N_G\Big(\psi\big(\lneigh_{F}(x)\big)\Big)\Big\}\,.
  \end{equation}
  For each $y>x$ with $\ldist(y,x)\le\ell_1$ we then ``cross off'' a set $C_\psi(y)$ of vertices from
  $W'_\psi$ that would negatively impact our prospects of embedding~$y$, that is, we set
  \begin{equation}
    \label{eq:main:Wdef}
    W_\psi=W'_{\psi}\,\setminus\hspace{-.5em}\bigcup_{x\in V(H'_1(y))}\hspace{-1em} C_\psi(y)\,.
  \end{equation}
  Before we explain how $C_\psi(y)$ is chosen, let us first state the
  properties we shall inductively maintain for each pair $a,b\in
  \{0\}\cup V(F)$ with  $a<b$ and each $\psi\in\Psi_{a}$, which are the following.
  \begin{enumerate}[label=$\text{\itmit{ind\arabic{*}}}_{a,b}$]
  \item
    If $a\ge\max\lneigh_F(b)$ then $\big|N_G\big(\psi(\lneigh_F(b);
    V_{\phi(b)})\big)\big|\ge \tfrac12(dp)^{\ldeg_F(b)}\cdot\frac{N}{K}$.
  \item If $a<\max\lneigh_F(b)$, then let
    $v(a,b)$ be the number of vertices of~$H'_1(b)$ not in $[a]$ and let
    $e(a,b)$ be the number of edges of~$H'_1(b)$ not entirely in $[a]$.
    The number of $dp$-unpromising extensions of~$\psi$
    for~$b$ is at most $\kappa^{v(a,b)+1} N^{v(a,b)}p^{e(a,b)}$.
  \end{enumerate}
  The first of these two properties is chosen because \indW{x-1}{x} immediately gives
  \begin{equation}\label{eq:main:W'large}
    |W'_\psi|\ge\tfrac12(dp)^{\ldeg_F(x)}\cdot\frac{N}{K}\,,
  \end{equation}
  for each $\psi\in\Psi_{x-1}$.  The second of these two properties is chosen so
  that we can inductively maintain the first by defining for each~$y$ with $x\in
  V(H_1'(y))$ the set~$C_\psi(y)$ as follows. For $\psi\in\Psi_{x-1}$ we call a
  vertex $v\in W'_\psi$ \emph{unpromising for~$y$ when extending~$\psi$} if there are
  more than $\kappa^{v(x,y)+1} N^{v(x,y)}p^{e(x,y)}$ extensions
  of~$\psi\cup\{x\mapsto v\}$ which are $dp$-unpromising for~$y$, where $v(y,x)$
  and $e(y,x)$ are as in \indE{x}{y}. We set
  \begin{equation*}
    C_\psi(y) = \{v\in W'_{\psi}: v \text{ is unpromising for~$y$ when extending $\psi$}\}\,.
  \end{equation*}
  Let us next argue that \indE{x-1}{y} implies a bound on $|C_\psi(y)|$
  for each $y$ such that $x\in V(H'_1(y))$. Indeed, any vertex $v\in C_\psi(y)$
  gives more than $\kappa^{v(x,y)+1} N^{v(x,y)}p^{e(x,y)}$ extensions
  of~$\psi\cup\{x\mapsto v\}$ which are $dp$-unpromising for~$y$; all these are
  also extensions of $\psi$ which are $dp$-unpromising for~$y$. Therefore, using \indE{x-1}{y} we conclude
  \begin{equation*}
    |C_\psi(y)|\kappa^{v(x,y)+1} N^{v(x,y)}p^{e(x,y)}\le \kappa^{v(x-1,y)+1} N^{v(x-1,y)}p^{e(x-1,y)}\,,
  \end{equation*}
  which gives
  \begin{equation}\label{eq:main:Cbitsmall}
    |C_\psi(y)|\le \kappa Np^{\ldeg_{H'_1}(x)}\,.
  \end{equation}
  If $x$ is not a boundary vertex of $H'_1(y)$, then
  $\ldeg_{H'_1}(x)=\ldeg_F(x)$. If $x$ is a boundary vertex of $H'_1(y)$, then
  $\ldeg_{H'_1}(x)$ may be smaller than $\ldeg_F(x)$. However in this second
  case we can establish the stronger bound $|C_\psi(y)|=0$.
  \begin{claim}\label{cl:main:Cbound}
   If \indE{x-1}{y} holds and $x$ is a boundary vertex of $H'_1(y)$, then
   $|C_\psi(y)|=0$ for every $\psi\in\Psi_{x-1}$.
  \end{claim}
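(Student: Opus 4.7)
The plan is to recognise this claim as precisely the scenario that Lemma~\ref{lem:far-away} was designed to handle. The condition ``$x$ is a boundary vertex of $H'_1(y)$'' is exactly $\ldist_F(x,y)=\ell_1$. The structural hypotheses of the lemma are also in place: the $(D,\tfrac1{10},p)$-neighbourhood property \ref{itm:far:neigh} is guaranteed by~\ref{itm:main:typical}, and the extension-count hypothesis \ref{itm:far:Spencer} follows from~\ref{itm:main:Spencer} together with the fact that $(H'_1(y),I\cup T)$ is $(D,\mu)$-Spencer whenever required. So Lemma~\ref{lem:far-away} applies.

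Given any $v\in W'_\psi$, set $X=\psi\big(\lneigh_F(x)\cap V(H'_1(y))\big)$. From~\eqref{eq:main:W'def} we have $W'_\psi\subset N_G(\psi(\lneigh_F(x)))\cap V_{\phi(x)}\subset N_G(X)\cap V_{\phi(x)}$, so Lemma~\ref{lem:far-away} can be applied to this $v$. Letting $B$ denote the number of $dp$-unpromising extensions of $\psi$ for~$y$, the lemma yields that $\psi\cup\{x\mapsto v\}$ has at most $\tfrac{3B}{2Np^{|X|}}$ $dp$-unpromising extensions for~$y$. The inductive hypothesis \indE{x-1}{y} bounds $B\le\kappa^{v(x-1,y)+1}N^{v(x-1,y)}p^{e(x-1,y)}$.

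The remaining step is a short bookkeeping calculation. Adding $x$ to the embedded vertices changes the counts via $v(x-1,y)=v(x,y)+1$, and, because every neighbour of~$x$ in $H'_1(y)$ is a left-neighbour of~$x$ (hence already in $[x-1]$), we have $e(x-1,y)=e(x,y)+\ldeg_{H'_1(y)}(x)\ge e(x,y)+|X|$. Combined with $p\le 1$, this gives
\[
\frac{3B}{2Np^{|X|}}\;\le\;\tfrac{3}{2}\,\kappa\cdot\kappa^{v(x,y)+1}N^{v(x,y)}p^{e(x,y)}\,.
\]
The only point needing care is that $|X|$ may be strictly less than $\ldeg_{H'_1(y)}(x)$ if $\psi$ collapses two left-neighbours of~$x$, but since $p\le1$ this only makes the bound smaller. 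Finally, with $\kappa=\tfrac{d^D}{20h_1K}$ we certainly have $\tfrac32\kappa<1$, so the displayed quantity is at most $\kappa^{v(x,y)+1}N^{v(x,y)}p^{e(x,y)}$. Hence no $v\in W'_\psi$ meets the strict inequality defining unpromising vertices, and $C_\psi(y)=\emptyset$ as claimed.
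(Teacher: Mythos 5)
Your proof is correct and follows essentially the same approach as the paper's: invoke Lemma~\ref{lem:far-away} with the appropriate $I$, bound $B$ via~\indE{x-1}{y}, and conclude with the same bookkeeping $v(x-1,y)=v(x,y)+1$ and $e(x-1,y)=e(x,y)+\ldeg_{H'_1(y)}(x)\ge e(x,y)+|X|$, using $\tfrac32\kappa<1$. One small aside: your parenthetical justification ``because every neighbour of~$x$ in $H'_1(y)$ is a left-neighbour of~$x$'' is not needed (and in general a boundary vertex $x$ can have right-neighbours inside $H'_1(y)$); the identity $e(x-1,y)-e(x,y)=\ldeg_{H'_1(y)}(x)$ counts edges of $H'_1(y)$ from $x$ into $[x-1]$ and holds regardless, so the argument is unaffected.
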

  \begin{claimproof}
   We appeal to Lemma~\ref{lem:far-away} with $I=[x-1]\cap V\big(H'_1(y)\big)$.
   We can apply this lemma because \ref{itm:main:typical} gives that
   \ref{itm:far:neigh} is satisfied, and \ref{itm:main:Spencer} gives that
   \ref{itm:far:Spencer} is satisfied. Note that $\psi$ is a $\phi$-partite
   homomorphism from $F\big[[x-1]\big]$ to~$G$ by construction. By \indE{x-1}{y}
   the number~$B$ of $dp$-unpromising extensions of~$\psi$ for~$y$ is at most
   $\kappa^{v(x-1,y)+1} N^{v(x-1,y)}p^{e(x-1,y)}$. Let
   $X=\psi\big(\lneigh_F(x)\cap V(H'_1(y))\big)$. Observe that $C_\psi(y)$ is a
   subset of $N_G(X)\cap V_{\phi(x)}$.
  
   By Lemma~\ref{lem:far-away}, for every $v\in N_G(X)\cap V_{\phi(x)}$ the
   number of extensions of $\psi\cup\{x\mapsto v\}$ which are $dp$-unpromising
   for~$y$ is at most
   \begin{equation*}\begin{split}
       \frac{3B}{2Np^{|X|}}
       &\le \frac{3 \kappa^{v(x-1,y)+1} N^{v(x-1,y)}p^{e(x-1,y)}}{2Np^{|X|}}
       = \frac{3}{2} \kappa^{v(x-1,y)+1} N^{v(x,y)}p^{e(x,y)} \\
       &\le \kappa^{v(x,y)+1} N^{v(x,y)}p^{e(x,y)}\,,
   \end{split}\end{equation*}
   hence~$v$ is not unpromising for~$y$ when extending~$\psi$.
   Thus $C_\psi(y)=\emptyset$.
  \end{claimproof}
  Putting~\eqref{eq:main:Cbitsmall} and Claim~\ref{cl:main:Cbound} together, we
  see that whether or not $x$ is a boundary vertex of $H'_1(y)$ we have
  \begin{equation}\label{eq:main:Csmall}
    |C_\psi(y)|\le \kappa Np^{\ldeg_{F}(x)}\,,
  \end{equation}
  assuming that \indE{x-1}{y} holds.
  
  Now we can argue that this and~\eqref{eq:main:W'large} imply our
  theorem. Indeed, since there are at most~$h_1$ vertices at distance~$\ell_1$
  from~$x$ and hence at most as many~$y$ such that $x\in V\big(H'_1(y)\big)$,
  these two inequalities together with~\eqref{eq:main:Wdef}
  give that
  \begin{equation}\label{eq:main:sizeWpsi}
   |W_\psi|\ge\tfrac12(dp)^{\ldeg_F(x)}\cdot\frac{N}{K}-h_1\cdot\kappa Np^{\ldeg_F(x)} \ge\rho p^{\ldeg_F(x)} N\,,
  \end{equation}
  because $\rho=\frac{d^D}{4K_0}$ and
  $\kappa=\frac{d^D}{20h_1 K}$.
  Hence the condition~\ref{itm:CN:big} of Lemma~\ref{lem:CN} holds.
  We then set
  \begin{equation}\label{eq:main:Psi}
    \Psi_x = \big\{\psi\cup\{x\mapsto v\}:\psi\in\Psi_{x-1},v\in W_{\psi}\big\}\,,
  \end{equation}
  so that condition~\ref{itm:CN:ext} is satisfied. We next show that~\ref{itm:CN:S} and~\ref{itm:CN:bound} hold
  with
  \[S_x=\lneigh_F(x)\cup \bigcup_{\substack{y>x \\ x\in V(H'_1(y))}} \Big(V\big(H'_1(y)\big)\cap[x-1]\Big)\,.\]
  Observe that we can only have $x\in V(H'_1(y))$ if $\dist_F(x,y)\le\ell_1$.
  Hence $|S_x|\le 2\Delta^{\ell_1}\cdot h_1=L$, giving~\ref{itm:CN:S}. Now let
  $\psi,\psi'\in\Psi_{x-1}$ with
  $\psi|_{S_x}=\psi'|_{S_x}$. By~\eqref{eq:main:W'def} we
  have~$W'_\psi=W'_{\psi'}$ since $\lneigh_F(x)\subset S_x$. Now consider
  any~$y>x$ such that $x\in V\big(H'_1(y)\big)$.  A vertex $v\in W'_{\psi}$ is in
  the set $C_\psi(y)$ if there are too many extensions of~$\psi\cup\{x\mapsto
  v\}$ which are $dp$-unpromising for~$y$. By Definition~\ref{def:badext}, whether or not an
  extension~$\tilde\psi$ of $\psi\cup\{x\mapsto v\}$ is $dp$-unpromising for~$y$
  only depends on $\tilde\psi|_{V(H'_0(y))\cap[x]}$. Since $V(H'_0(y))\cap[x-1]\subset S_x$ and
  $\psi|_{S_x}=\psi'|_{S_x}$ we thus have $C_\psi(y)=C_{\psi'}(y)$, which
  together with $W'_\psi=W'_{\psi'}$ and~\eqref{eq:main:Wdef} implies
  $W_\psi=W_{\psi'}$, giving~\ref{itm:CN:bound}. Thus, all conditions of
  Lemma~\ref{lem:CN} are satisfied and we conclude that we obtain the desired
  injective embedding $\psi\in\Psi_n$ of~$F$ in~$G$.

  It remains to show that \indW{0}{y} and \indE{0}{y} holds for all $y$ (the
  base case), and that, if we assume \indW{x-1}{y} and \indE{x-1}{y} hold for
  all $x-1<y$ and all $\psi\in\Psi_{x-1}$, then after our choice of $\Psi_x$
  also \indW{x}{y} and \indE{x}{y} for all $\psi\in\Psi_x$ and all $x<y$ hold
  (the induction step). We start with the base case. The only $\psi\in\Psi_0$ is
  the trivial one embedding the empty graph to~$G$. Condition \indW{0}{y} is
  vacuously true since $0\ge\min\lneigh_F(y)$ only holds when~$y$ does not have
  any left-neighbours, in which case we are merely requiring that
  $|V_{\psi(y)}|\ge\frac{N}{2K}$, which is the case. For checking
  condition \indE{0}{y}, observe that a $dp$-unpromising extension for~$y$ of
  the trivial~$\psi$ is simply an injective $\phi$-partite embedding~$\psi'$ of
  $H_1'(y)$ in~$G$ such that $\psi'|_{V(H_0'(y))}$ is $dp$-unpromising
  for~$y$, where the fact that the embedding has to be injective follows from
  the fact that all vertices of $H'_1(y)$ are in different sets~$X_i$ by the definition
  of the~$X_i$. Hence we want to count the number~$Y$ of such embeddings
  of~$H_1'(y)$. To this end, observe that by~\ref{itm:main:good-emb} the number
  of $\phi$-partite embeddings of $H_0'(y)$ in~$G$ which are
  $dp$-unpromising is bounded above by $Y_0=f(K)\cdot
  p^{e(H_0'(y))}(\frac{N}{K})^{v(H_0'(y))}$.  By definition
  $\big(H_1'(y),V(H_0'(y))\big)$ is $(D,\mu)$-Spencer.  We conclude that for
  each fixed such unpromising $H_0'(y)$-copy, by \ref{itm:main:Spencer} there
  are at most
  \[Y_1 = 2 N^{v\left(H_1'(y)\right)-v\left(H_0'(y)\right)} p^{e\left(H'_1(y)\right)-e\left(H'_0(y)\right)}\]
  extensions of this $H_0'(y)$-copy to a $H_1'(y)$-copy in~$\Gamma\supset G$. Hence, we get
  \begin{equation*}\begin{split}
  Y&\le Y_0\cdot Y_1
  = f(K)\cdot p^{e\left(H_0'(y)\right)}\Big(\frac{N}{K}\Big)^{v\left(H_0'(y)\right)}
  \cdot 2 N^{v\left(H_1'(y)\right)-v\left(H_0'(y)\right)} p^{e\left(H'_1(y)\right)-e\left(H'_0(y)\right)} \\
  &= \frac{2f(K)}{K^{v\left(H_0'(y)\right)}}N^{v(0,y)}p^{e(0,y)}
  \le\kappa^{v(0,y)+1} N^{v(0,y)}p^{e(0,y)}\,,
  \end{split}\end{equation*}
  where $v(0,y)$ and $v(0,y)$ are as in~\indE{0}{y} and where
  we use $\kappa=\frac{d^D}{20h_1 K}$ and $f(K)=\big(\frac{d^D}{20h_1 K}\big)^{h_1+1}$.
  This gives~\indE{0}{y}.

  Let us now turn to the induction step. For this, assume that \indW{x-1}{y} and
  \indE{x-1}{y} hold for all $x-1<y$ and all $\psi\in\Psi_{x-1}$. Let $\Psi_{x}$
  be as chosen in~\eqref{eq:main:Psi}, and consider any
  $\psi'=\psi\cup\{x\mapsto v\}\in\Psi_x$. Let $y>x$. If $x\not\in
  V\big(H'_1(y)\big)$, then \indW{x}{y} and \indE{x}{y} follow immediately from
  \indW{x-1}{y} and \indE{x-1}{y}, because then $v(x,y)=v(x-1,y)$ and
  $e(x,y)=e(x-1,y)$ and by construction $\psi\in\Psi_{x-1}$.  Hence we can
  assume that $x\in V\big(H'_1(y)\big)$.
  
  We only need to check \indW{x}{y} when $x\ge\max\lneigh_F(y)$, and if
  $x>\max\lneigh_F(y)$ then \indW{x}{y} follows from \indW{x-1}{y}. So assume
  $x=\max\lneigh_F(y)$. In this case $x$ is the last of the left neighbours
  of~$y$ and therefore the last vertex in~$H'_1(y)$, hence $v(x,y)=0$ and $e(x,y)=0$.
  By definition of~$\Psi_x$, we have that $\psi'$ is such that~$v$ is
  not unpromising for~$y$ when extending~$\psi$, and hence there are at most
  \begin{equation*}
    \kappa^{v(x,y)+1}N^{v(x,y)}p^{e(x,y)}=\kappa<1
  \end{equation*}
  extensions of~$\psi'$ which are $dp$-unpromising for~$y$.  Since all vertices
  of $H'_1(y)$ are embedded by $\psi'$, this can only happen when $\psi'$
  induces a $dp$-promising embedding of $H'_0(y)$ for~$y$. Thus
  \indW{x}{y} holds.

  For checking \indW{x}{y} assume that $x<\max\lneigh_F(y)$.  By definition, we
  chose $\psi'$ such that~$v$ is not unpromising for~$y$, and hence there are at most
  $\kappa^{v(x,y)+1}N^{v(x,y)}p^{e(x,y)}$ extensions of~$\psi'$ which are
  $dp$-unpromising for~$y$, which is \indE{x}{y}.
\end{proof}

The proof of Theorem~\ref{thm:main2} is an easy modification of the proof of
Theorem~\ref{thm:main}; we sketch the required modification and omit the details.

\begin{proof}[Sketch proof of Theorem~\ref{thm:main2}]
We begin the proof of Theorem~\ref{thm:main2} as for Theorem~\ref{thm:main}, with $D:=\Delta-1$. We can use the same choice of constants as in that proof; and we require the same properties of $G(N,p)$, except that if $\Delta=3$ the constant $C$ of Lemma~\ref{lem:good-emb} is relevant (we require $p\ge CN^{-2/5}$ for this $C$) and we add to~\ref{itm:main:good-emb} that $G(N,p)$ satisfies the properties~\ref{lem:good-emb:complete} and~\ref{lem:good-emb:k4} of Lemma~\ref{lem:good-emb}, obtaining
 \begin{enumerate}[label=\itmarab{PR'},start=2]
  \item\label{itm:main:good-emb2} the conclusions~\ref{lem:good-emb:degen},~\ref{lem:good-emb:complete} and~\ref{lem:good-emb:k4} of Lemma~\ref{lem:good-emb}
    applied with $r,D,\mu,f,h_0,h_1$
  \end{enumerate}
We suppose that $\Gamma=G(N,p)$ satisfies~\ref{itm:main:typical}--\ref{itm:main:Spencer}. Given an $r$-colouring of $E(\Gamma)$, we obtain a colour $\chi$ from Lemma~\ref{lem:good-emb}. as in the proof of Theorem~\ref{thm:main}. At this point, we diverge slightly from the strategy of that proof. Given $F\in\mathcal{G}(\Delta,cN)$, we pick a first component $F'$ of $F$ and try to embed it in colour $\chi$. We do this as follows.

We set $Z$ to be the image of the previously embedded components (for the first component, the empty set). We obtain sets $V_1,\dots,V_{h_1}$ from Lemma~\ref{lem:good-emb} as in the proof of Theorem~\ref{thm:main}. Now, we separate three cases. If $F'$ is not $\Delta$-regular, then because it is connected it is $D$-degenerate, where $D=\Delta-1$. In this case, we embed $F'$ in $D$-degeneracy order exactly as in the proof of Theorem~\ref{thm:main}, which in particular means we do not use any vertices of $Z$. If $F=K_4$, we use~\ref{itm:main:good-emb2} to find a copy of $K_4$ outside $Z$ and embed $F'$ to it. Finally, suppose $F'$ is $\Delta$-regular and not $K_4$. $F'$ contains a cycle, and so it contains a shortest cycle, which is necessarily an induced cycle. If this cycle has $2\mu^{-1}$ or fewer vertices, let its vertices be $Q$. If not, let $Q$ be a set of $2\mu^{-1}$ consecutive vertices on the cycle, so $F'[Q]$ is a path.

We let $H'_1(Q)$ be as defined in Setup~\ref{setup:F}, and obtain $H'_0(Q)$ as follows. Let \[T=\bigcup_{q\in Q}N_{F'}(Q)\setminus Q\] be the neighbours of $Q$. We have $|T|\le (\Delta-1)|Q|\le 2\mu^{-1}D$. We apply Lemma~\ref{lem:findroots} with $I=\emptyset$ to find $T'\subset V(H'_1(Q))$ with $T\subset T'$, with $|T'|\le D^2|T|^2\mu^{-1}\le 4D^4\mu^{-3}\le h_0$, and such that $(H'_1(Q),T')$ is $(D,\mu)$-Spencer. This lemma applies since $H'_1(Q)$ is $D$-degenerate: it has maximum degree at most $D+1$ and all its components contain a vertex of degree $D$ (a neighbour of a vertex of $Q$). We let $H'_0(Q)=F'[T']$.

Now $F'-Q$ is a $D$-degenerate graph. We copy almost exactly the proof of Theorem~\ref{thm:main} to embed it in a $D$-degeneracy order (note that the final vertex in this order is necessarily the last neighbour of $Q$). The only difference is that we add one more collection of inductive properties~\indW{v(F'-Q)}{Q} and ~\indE{a}{Q}, where $0\le a< v(F'-Q)$. These are defined as follows:
\begin{itemize}[leftmargin=*]
\item
  \indW{v(F'-Q)}{Q}
  is a property of $\psi\in\Psi_{v(F'-Q)}$, stating that the restriction of $\psi$ to $H'_0(Q)$ is completable. 
\item
  \indE{a}{Q}
  is a property of $\psi\in\Psi_a$, stating that the number of
  uncompletable extensions of $\psi$ is at most
  $\kappa^{v(a,Q)+1}N^{v(a,Q)}p^{e(a,Q)}$, where $v(a,Q)$ is as before defined
  to be the number of vertices of $H'_1(Q)$ not in $[a]$, and $e(a,Q)$ the
  number of edges of $H'_1(Q)$ not entirely in $[a]$.
\end{itemize}
Observe that this is only different to the inductive properties in the proof of Theorem~\ref{thm:main} in that we replace `$dp$-unpromising for $b$' with `uncompletable'. We continue from this point to copy the proof of Theorem~\ref{thm:main}. We define the set $C_\psi(Q)$ analogously to $C_\psi(y)$, again replacing `unpromising for $y$' with `completable'. We define $W_\psi$ similarly to~\eqref{eq:main:Wdef}, removing in addition $C_\psi(Q)$ whenever $x\in V(H'_1(Q))$, and as before doing this implies that we maintain all the inductive properties. The analogous argument as for $|C_\psi(y)|$, making the same replacement, shows that $|C_\psi(Q)|$ is guaranteed to be small (we obtain the same numerical bounds).  We need to account for $C_\psi(Q)$ in~\eqref{eq:main:sizeWpsi}, which we can do by replacing $h_1$ by $h_1+1$ in the middle term. The final inequality continues to hold; the rest of the proof goes through as written.

These modifications to the proof of Theorem~\ref{thm:main} show that we can embed $F'-Q$ in colour $\chi$, without using vertices of $Z$, and with the additional property \indW{v(F'-Q)}{Q}. This additional property states that we can extend $\psi$ to an embedding $\psi'$ of $F'$ in colour $\chi$, without using vertices of $Z$. This is what we wanted to prove.

It remains to complete the embedding of $F$ by embedding the remaining components. We simply choose another component $F'$ and repeat the above strategy, from the point where we set $Z$ to be the image of the previously embedded components, and continue until $F$ is entirely embedded: we can do this because Lemma~\ref{lem:good-emb} allows any choice of $Z$ with $|Z|\le cN=V(F)$. This completes the proof of Theorem~\ref{thm:main2}.

The reader can quickly check that if $\Delta=3$, the only place in the above argument where $p\ge N^{-\tfrac1{D}+\mu}$ does not suffice is when we ask for~\ref{lem:good-emb:k4} to hold. In turn, we need this property only if $F$ contains a copy of $K_4$. It follows that if $p\ge N^{-\tfrac12+\mu}$ then there exists $c>0$ such that $G(N,p)$ is a.a.s.\ $r$-partition universal for the class of $cN$-vertex graphs with maximum degree $3$ and no copy of $K_4$.
\end{proof}

\section{Regularity and promising embeddings: The proof of Lemma~\ref{lem:good-emb}}\label{sec:goodembs}

In this section, we prove Lemma~\ref{lem:good-emb}. The basic idea of the proof is fairly straightforward: given a typical $\Gamma=G(n,p)$ and an $r$-colouring of $E(\Gamma)$, we apply the Sparse Regularity Lemma to the colour graphs, and thus obtain a coloured nearly complete auxiliary graph on the parts of the regularity partition (colouring each regular pair by densest colour). We find a $t$-vertex monochromatic colour $i$ clique in this auxiliary graph. Now an application of the counting K{\L}R conjecture shows that at most a small fraction of partite embeddings into the colour $i$ graph on the corresponding parts are unpromising.

However there is a subtlety: with this argument, the `small fraction' cannot be made to depend in any useful way on the regularity constants (it will be too big). In order to deal with this, we in fact apply a `strengthened' version of the Sparse Regularity Lemma, and then show that after discarding a small fraction of each part, the fraction of partite embeddings which are unpromising in what remains can have any desired dependence on the regularity constants. In order to give this argument, we begin by giving the required regularity definitions and the strengthened Sparse Regularity Lemma we want. Our strengthened Sparse Regularity Lemma applies for graphs which are upper regular, in the following sense.

We say an $n$-vertex graph $G$ is \emph{$(\eta,p)$-upper regular} if for any disjoint vertex sets $X,Y\subseteq V(G)$ such that $|X|,|Y|\ge\eta n$ we have $e(X,Y)\le (1+\eta)p|X||Y|$. A standard application of the Chernoff bound shows that if $\Gamma=G(n,p)$ is a typical random graph, with $p\gg n^{-1}$, then it is $(\eta,p)$-upper regular, and so all its subgraphs are also $(\eta,p)$-upper regular.

\begin{lemma}\label{lem:upreg}
Given $\eta>0$ there exists $C>0$ such that for any $p\ge Cn^{-1}$, w.h.p.\ $\Gamma=G(n,p)$ has the property that all $n$-vertex subgraphs of $\Gamma$ are $(\eta,p)$-upper regular. Furthermore if $X$ and $Y$ are any two disjoint vertex sets of $\Gamma$ with $|X|,|Y|\ge\eta n$, we have $e_\Gamma(X,Y)\ge(1-\eta)p|X||Y|$.
\end{lemma}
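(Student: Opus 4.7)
The plan is a direct Chernoff-plus-union-bound calculation, which is completely standard for this kind of ``sparse regularity from below'' statement. Fix $\eta>0$ and consider a pair of disjoint vertex sets $X,Y\subset[n]$ with $|X|,|Y|\ge\eta n$. In $\Gamma=G(n,p)$ the quantity $e_\Gamma(X,Y)$ is a sum of independent Bernoulli random variables with mean $p|X||Y|\ge p\eta^2 n^2$, so Theorem~\ref{thm:chernoff} applied with $\delta=\eta$ gives
\[\Pr\Big(\big|e_\Gamma(X,Y)-p|X||Y|\big|>\eta p|X||Y|\Big)\le 2\exp\Big(-\tfrac{\eta^2}{3}p|X||Y|\Big)\le 2\exp\Big(-\tfrac{\eta^4}{3}Cn\Big)\,,\]
using $p\ge Cn^{-1}$ in the last step.

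I would then take a union bound over all ordered pairs of disjoint subsets of $[n]$. Since each vertex lies in one of three places ($X$, $Y$, or neither), there are at most $3^n$ such pairs, so the probability of any bad pair is at most $2\cdot 3^n\cdot\exp(-\tfrac{\eta^4}{3}Cn)$. Choosing $C=C(\eta)$ large enough, for instance $C>6\eta^{-4}\log 3$, makes this tend to $0$, so a.a.s.\ every pair $(X,Y)$ with $|X|,|Y|\ge\eta n$ satisfies both $e_\Gamma(X,Y)\le(1+\eta)p|X||Y|$ and $e_\Gamma(X,Y)\ge(1-\eta)p|X||Y|$. The second of these inequalities is exactly the ``furthermore'' clause.

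For the first clause on subgraphs, the key observation is that upper regularity is trivially inherited: if $G$ is any subgraph of $\Gamma$ on the same vertex set, then for any disjoint $X,Y$ with $|X|,|Y|\ge\eta n$ we have $e_G(X,Y)\le e_\Gamma(X,Y)\le(1+\eta)p|X||Y|$, so $G$ is $(\eta,p)$-upper regular whenever $\Gamma$ is. There is really no obstacle to overcome here; the only choice is getting the quantifiers on $C$ vs.\ $\eta$ the right way round in the Chernoff estimate, which is handled by the factor of $p\ge Cn^{-1}$ absorbing the $3^n$ from the union bound.
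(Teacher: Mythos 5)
Your proposal is correct and is essentially the paper's own argument: a Chernoff bound for a fixed pair $(X,Y)$, a union bound over exponentially many pairs absorbed by the exponentially small failure probability (made possible by choosing $C=C(\eta)$ appropriately), and the observation that upper regularity passes trivially to subgraphs. The only cosmetic difference is that the paper union-bounds over $2^{2n}$ (not necessarily disjoint) pairs with $C=6\eta^{-4}$ while you use $3^n$ disjoint pairs, and the paper explicitly notes the harmless assumption $\eta\le\tfrac12$ so that $\delta=\eta$ falls in the range $(0,3/2)$ required by Theorem~\ref{thm:chernoff}, a point worth adding for completeness.
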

\begin{proof}
 Given $\eta>0$, without loss of generality we may assume $\eta\le\tfrac12$. Let $C=6\eta^{-4}$. Since for any $G\subseteq\Gamma$ and any $X,Y\subseteq V(\Gamma)$ we have $e_G(X,Y)\le e_\Gamma(X,Y)$, it suffices to prove that w.h.p.\ $G(n,p)$ is $(\eta,p)$-upper regular.
 
 Given $X,Y\subset [n]$, with $|X|,|Y|\ge\eta n$, the probability that $(X,Y)$ witnesses a failure of upper regularity of $G(n,p)$ is, by the Chernoff bound, at most
 \[\exp\big(-\tfrac{\eta^2\cdot p |X||Y|}{3}\big)\le\exp(-2n)\,.\]
 Since there are at most $2^{2n}$ choices of $X$ and $Y$, by the union bound the probability that $G(n,p)$ fails to be $(\eta,p)$-upper regular is at most $2^{2n}\exp(-2n)$, which tends to zero as $n\to\infty$. Since the same Chernoff bound applies to the probability of $e_\Gamma(X,Y)<(1-\eta)p|X||Y|$, we obtain the second conclusion by the same argument.
\end{proof}

Given a graph $G$, and $p\in(0,1]$, and disjoint vertex sets $U,V\subset V(G)$, we define the \emph{$p$-density of $(U,V)$} to be $d_p(U,V):=\tfrac{e_G(U,V)}{p|U||V|}$. We say $(U,V)$ is \emph{$(\eps,d,p)$-regular in $G$} if $d_p(U',V')=d\pm\eps$ for every $U'\subset U$ and $V'\subset V$ with $|U'|\ge\eps|U|$ and $|V'|\ge\eps|V|$. We say $(U,V)$ is \emph{$(\eps,p)$-regular in $G$} if there exists $d$ such that $(U,V)$ is $(\eps,d,p)$-regular, and $(\eps,\ge d,p)$-regular if there is $d'\ge d$ such that $(U,V)$ is $(\eps,d,p)$-regular.

We say a partition $\cP$ of a vertex set $V(G)$ is \emph{equitable} if $|U|=|V|\pm1$ for every $U,V\in\cP$. Given a partition $\cP$ of $V(G)$, we say a partition $\cP'$ of $V(G)$ \emph{refines} $\cP$ if for every $U\in\cP$ and $V\in\cP'$ we have either $V\subset U$ or $U\cap V=\emptyset$. We say $\cP'$ is an \emph{equitable refinement} of $\cP$ if there is some $s\in\nats$ such that each part of $\cP$ contains exactly $s$ parts of $\cP'$, and if $U$ is any part of $\cP$, and $V,V'\in\cP'$ are subsets of $U$, then $|V|=|V'|\pm1$.

Given a partition $\cP$ of $V(G)$, in which $|U|=|V|\pm 2$ for each $U,V\in\cP$, we say $\cP$ is \emph{$(\eps,p)$-regular for $G$} if for all but at most $\eps|\cP|^2$ pairs $U,V\in\cP$, the pair $(U,V)$ is $(\eps,p)$-regular in $G$.

We need the following `strengthened' variant of the Sparse Regularity Lemma. The $p=1$ dense case was proved by Alon, Fischer, Krivelevich and Szegedy~\cite{AFKS}.

\begin{lemma}[strengthened sparse regularity lemma]\label{lem:ssrl} \mbox{} \\
  For every $\eps>0$, every function $f\colon\nats\to\reals^+$, and every
  $k_0\in\nats$ there are $K\in\nats$ and $\eta>0$ such that for all sufficiently large $n$ and $p>\tfrac{1}{\eta n}$, the
  following holds. If $G_1,\dots,G_r$ are edge-disjoint graphs on vertex set
  $[n]$, which are $(\eta,p)$-upper regular, then there is a coarse partition
  $\cP_c$ and a fine partition $\cP_f$ of $[n]$ such that 
  for each $i\in[r]$,
  \begin{enumerate}[label=\itmarab{RL}]
    \item\label{ssrl:ref} $\cP_c$ and $\cP_f$ have between~$k_0$ and~$K$ parts, $\cP_c$ is equitable,
      and~$\cP_f$ is an equitable refinement of~$\cP_c$,
    \item\label{ssrl:coarse} $\cP_c$ is an $(\eps,p)$-regular partition for~$G_i$,
    \item\label{ssrl:fine} $\cP_f$ is an $\big(f\big(|\cP_c|\big),p\big)$-regular partition for~$G_i$,
    \item\label{ssrl:dens} $d_p(X',Y';G_i)=d_p(X,Y;G_i)\pm\eps$ for all but at
      most $\eps|\cP_f|^2$ pairs $(X',Y')$ in $\cP_f$, where $X,Y$ are
      the unique parts of $\cP_c$ such that $X'\subset X$ and $Y'\subset Y$.
  \end{enumerate}
\end{lemma}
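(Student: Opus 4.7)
The plan is to follow the iterated-refinement strategy of Alon, Fischer, Krivelevich and Szegedy~\cite{AFKS}, transported to the sparse setting via a mean-square $p$-density index. The starting point is a version of the sparse regularity lemma for $r$ edge-disjoint upper-regular graphs which refines any given equitable partition; this is routine, obtained either by applying the single-graph sparse regularity lemma to $G_1 \cup \cdots \cup G_r$ or by encoding the initial partition as extra colour classes. The proof then alternates applications of this lemma with an index-increment step that cannot repeat too often.

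Define the mean-square $p$-density of a partition $\cP$ of $[n]$ with respect to $G_i$ by
\[ q_i(\cP) \;=\; \sum_{X,Y \in \cP} \frac{|X||Y|}{n^2}\, d_p(X,Y;G_i)^2, \qquad q(\cP) \;=\; \sum_{i=1}^r q_i(\cP). \]
Two facts do all the work. First, by the variance identity, whenever $\cP'$ refines $\cP$,
\[ q_i(\cP') - q_i(\cP) \;=\; \sum_{(X',Y') \in (\cP')^2} \frac{|X'||Y'|}{n^2}\,\big(d_p(X',Y';G_i) - d_p(X,Y;G_i)\big)^2, \]
where $(X,Y)$ denotes the pair of $\cP$ with $X' \subseteq X$ and $Y' \subseteq Y$; in particular $q$ is non-decreasing under refinement. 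Second, if $\eta$ is chosen small compared to $\eps$, then on any equitable partition with parts of size at least $\eta n$, the $(\eta,p)$-upper regularity of each $G_i$ gives $d_p(X,Y;G_i) \le 1 + \eta$ on all but a negligible fraction of pairs, and hence $q(\cP) \le 2r$.

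The construction is then: let $\cP_c^{(0)}$ be an $(\eps,p)$-regular equitable partition with at least $k_0$ parts, obtained from the sparse regularity lemma. For each $j \ge 0$, apply the sparse regularity lemma to refine $\cP_c^{(j)}$ with parameter $f\big(|\cP_c^{(j)}|\big)$, obtaining an equitable refinement $\cP_f^{(j)}$ which is $\big(f(|\cP_c^{(j)}|),p\big)$-regular for every $G_i$. If condition~\ref{ssrl:dens} holds, return $(\cP_c^{(j)},\cP_f^{(j)})$. Otherwise more than $\eps|\cP_f^{(j)}|^2$ ordered pairs in $\cP_f^{(j)}$ satisfy $|d_p(X',Y';G_i) - d_p(X,Y;G_i)| > \eps$ for some~$i$; by the identity above and equitability this forces $q(\cP_f^{(j)}) \ge q(\cP_c^{(j)}) + \tfrac14 \eps^3$, and we set $\cP_c^{(j+1)} := \cP_f^{(j)}$ and continue. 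Since $q \le 2r$ grows by at least $\tfrac14\eps^3$ at each bad step, the iteration halts after at most $T = \lceil 8r\eps^{-3}\rceil$ rounds, and the final bound $K$ is the $T$-fold composition of the sparse regularity lemma's partition-size bound, depending only on $\eps$, $f$, $r$, and $k_0$.

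The main obstacle, absent in the dense case, is that in the sparse setting the $p$-density of an irregular pair is not automatically $O(1)$: a single pair can carry edge-mass wildly disproportionate to $p|X||Y|$. The fix is to absorb such pairs into the $\eps$-fraction of exceptional pairs permitted by~\ref{ssrl:dens}, and to invoke upper regularity only on pairs of parts of size at least $\eta n$, which is guaranteed by equitability together with the bound $K$. Once this is handled, the variance identity, the convexity-based increment, and the termination argument go through as in the dense case.
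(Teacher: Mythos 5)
Your proposal follows the same route as the paper: iterate the sparse regularity lemma, track the mean-square $p$-density index, and terminate via the upper-regularity bound on the total index. The variance identity, the $\Theta(\eps^3)$ index increment per bad round, and the observation that upper regularity caps the index at roughly $2r$ (which is exactly how you circumvent the sparse obstacle of unbounded pair densities) are all the same moves the paper makes.

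There is one genuine gap: you set $\cP_c^{(j+1)} := \cP_f^{(j)}$ directly, but $\cP_f^{(j)}$ is only an \emph{equitable refinement} of $\cP_c^{(j)}$ (part sizes can differ by $2$), not an equitable partition (sizes differing by at most $1$). This matters in two places. First, Lemma~\ref{lem:srl} as stated requires an equitable input partition, so you cannot feed $\cP_c^{(j+1)}$ straight back into it. Second, condition~\ref{ssrl:ref} requires the output $\cP_c$ to be equitable, which your $\cP_c^{(j)}$ for $j\ge1$ in general is not. The paper deals with this by re-equitarizing after each refinement: it moves at most one vertex per cluster to restore equitability, uses $p>\tfrac1{\eta n}$ to bound the resulting change in $p$-densities by $\tfrac1{10r}\eps^3$, and so loses at most $\tfrac18\eps^3$ of index, leaving a net increment of at least $\tfrac18\eps^3$ per bad round. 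Incorporating this step (and applying the sparse regularity lemma with parameter $\tfrac12\min\big(\eps,f(\cdot)\big)$ so that the $(\eps,p)$-regularity required by~\ref{ssrl:coarse} survives the vertex-moving) closes the argument.
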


We defer the proof of this lemma, which follows a standard strategy, to Appendix~\ref{app:src}.

In order to prove Lemma~\ref{lem:good-emb}, we also need the following counting lemma.

\begin{lemma}\label{lem:counting}
 Given any graph $H$ with $m_2(H)\ge1$, and any $\delta,d,\mu>0$, there exists $\eps>0$ such that the following holds. For every $\eta>0$ there exists $C>0$ such that if either $p\ge N^{\mu-1/m_2(H)}$, or $p\ge Cn^{-1/m_2(H)}$ and $H$ is strictly $2$-balanced, then a.a.s.\ the following holds for $\Gamma=G(N,p)$. Suppose $V_1,\dots,V_r$ are any disjoint vertex sets, each of size at least $\eta n$, in $V(\Gamma)$, and $\phi:V(H)\to[r]$ is any map such that $\phi(x)\neq \phi(y)$ if $xy\in E(H)$. Suppose that $|V_i|\ge\delta|V_j|$ for any $i,j\in[r]$. Let $G$ be any subgraph of $\Gamma$ such that if $xy\in E(H)$ then $\big(V_{\phi(x)},V_{\phi(y)}\big)$ is an $(\eps,d_{xy},p)$-regular pair in $G$, with $d_{xy}\ge d$. Then the number of graph embeddings from $H$ to $G$ mapping $x$ into $V_{\phi(x)}$ for each $x\in V(H)$ is equal to
 \[(1\pm\delta)\Bigg(\prod_{x\in V(H)}|V_{\phi(x)}|\Bigg)\cdot\prod_{xy\in E(H)}pd_{xy}\,.\]
\end{lemma}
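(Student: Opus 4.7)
The plan is to deduce this counting lemma from a standard sparse Kohayakawa--{\L}uczak--R\"odl-type counting theorem together with a small extension argument for the non-strictly-$2$-balanced case. The key ingredient is the sparse counting lemma of Conlon--Gowers and Schacht (reproved via hypergraph containers by Balogh--Morris--Samotij and Saxton--Thomason): for every strictly $2$-balanced graph $H^*$ and constants $\delta', d > 0$, there exist $\eps > 0$ and $C > 0$ such that with $p \ge C N^{-1/m_2(H^*)}$, a.a.s.\ $\Gamma = G(N, p)$ has the property that every $H^*$-partite $(\eps, \ge d, p)$-regular subgraph of $\Gamma$ on parts of linear size contains $(1 \pm \delta')$ times the expected number of partite $H^*$-copies. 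This immediately yields the strictly $2$-balanced case of our lemma; the slight mismatch that our parts $V_i$ have sizes only within a factor $\delta$ of each other rather than being equal is handled by splitting the larger parts into pieces of a common size, applying the theorem to each choice of pieces, and summing the resulting counts.

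For the general case with polynomial slack $p \ge N^{\mu - 1/m_2(H)}$, I would first choose a strictly $2$-balanced subgraph $H^* \subseteq H$ with $d_2(H^*) = m_2(H)$, which exists by taking a subgraph of maximum $2$-density. Apply the strictly $2$-balanced case to $H^*$ to count partite $H^*$-copies in the restriction of $G$ to $\{V_{\phi(x)} : x \in V(H^*)\}$. Then extend to $H$ one vertex at a time, enumerating the remaining vertices $v_1,\dots,v_k$ of $V(H) \setminus V(H^*)$ in an order where each $v_i$'s neighbours in $H$ all lie in $V(H^*) \cup \{v_1,\dots,v_{i-1}\}$. For each already-counted partial embedding, the number of ways to extend to $v_i$ is the size of the joint $G$-neighbourhood of the images of $N_H(v_i)$ inside $V_{\phi(v_i)}$; we control this using $\eps$-regularity of the relevant pairs in $G$ together with the star property of $\Gamma$ from Lemma~\ref{lem:typical}, which holds at the needed thresholds in our polynomially-slack regime.

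The main obstacle is propagating the error terms through the extension step: pairwise $(\eps,d,p)$-regularity does not by itself control joint $G$-neighbourhoods of more than one vertex, and one must invoke the standard inheritance property of sparse regular pairs, namely that for all but an $O(\sqrt{\eps})$-fraction of vertices $v$ in one part, the $p$-density between $v$ and a large subset of the opposite part is $d \pm \eps'$. Iterating this inheritance along the extension order, together with a union bound to discard the small fraction of partial copies on which some intermediate count fails to be concentrated, gives the final $(1 \pm \delta)$ count. Making the numerical constants work requires $\eps$ to be chosen sufficiently small in terms of $\delta$, $d$, and $v(H)$.
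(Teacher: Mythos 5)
Your strictly-$2$-balanced base case and your idea for handling unequal part sizes by splitting are fine and in the same spirit as the paper's reduction (the paper instead uses random subsampling and a random thickening/sparsification trick to go from Case A (equal parts, equal pair-densities) through to the general form; either route works).

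The genuine gap is in your handling of the non-strictly-$2$-balanced case. You propose to find a strictly $2$-balanced $H^*\subseteq H$ with $d_2(H^*)=m_2(H)$, count $H^*$-copies using the optimal-threshold counting lemma, and then extend vertex-by-vertex to all of $H$. But extending a vertex $v_i$ whose left-neighbourhood has size $\ell\ge 2$ requires two-sided control of the joint $G$-neighbourhood of an $\ell$-set $\psi(\lneigh(v_i))$ inside $V_{\phi(v_i)}$; this set has size $\Theta(p^\ell n)$, which is far smaller than $\eps|V_{\phi(v_i)}|$, so the $(\eps,d,p)$-regularity of the big pairs gives no information about it. The inheritance statement you cite (``all but an $O(\sqrt\eps)$-fraction of vertices $v$ have $p$-density $d\pm\eps'$ between $N(v)$ and a large opposite set'') is the dense slicing lemma and is simply false in this regime: the exceptional fraction in sparse regularity inheritance depends on typicality of $\Gamma$ and on $p$, not just on $\eps$, and establishing it (as Gerke--Kohayakawa--R\"odl--Steger, or Conlon--Gowers--Samotij--Schacht Section 6, do) is a substantial piece of machinery that is essentially equivalent in difficulty to the counting lemma you are trying to prove. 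In other words, your extension step secretly requires a sparse counting/inheritance result for rooted stars, which is circular. The paper avoids this entirely: the statement it cites ([CGSS, Section~5.2]) already gives the two-sided partite count for arbitrary $H$ at the polynomial-slack threshold $p\ge N^{\mu-1/m_2(H)}$, and the only remaining work is to relax the hypotheses from ``equal parts, one per vertex of $H$, with equal pair-densities $m$'' to the form in the lemma. That relaxation is accomplished by (i) randomly sparsifying/thickening each pair to a common edge count so Case~A applies, then unwinding by linearity of expectation; (ii) randomly subsampling the $V_i$ to a common size; and (iii) randomly splitting each $V_i$ into $|\phi^{-1}(i)|$ pieces to handle non-injective $\phi$. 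If you want to salvage your route you would have to import the sparse regularity inheritance framework in full, correctly state the sparse version of inheritance (exceptional sets of size $o(|V_i|)$ conditional on typicality of $\Gamma$, not $O(\sqrt\eps)|V_i|$), and carefully propagate the accumulating exceptional sets through the extension order --- which amounts to reproving [CGSS, Section~5.2] rather than using it.
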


In the special case that $r=v(H)$, $\phi$ is a bijection, $|V_1|=\dots=|V_r|=n$ and $e(V_i,V_j)=m\ge dpn^2$ for each $i,j$ such that $\phi^{-1}(i)\phi^{-1}(j)\in E(H)$, this lemma was stated by Conlon, Gowers, Samotij and Schacht, as~\cite[Lemma~1.6(ii)]{CGSS} for the strictly $2$-balanced case, and~\cite[Section 5.2]{CGSS} for the general case. We use standard methods to deduce Lemma~\ref{lem:counting} from this special case in Appendix~\ref{app:src}.

We now deduce from Lemma~\ref{lem:counting} the following statement which we will use to count unpromising embeddings; the difference between unpromising and `poor' of the following lemma is that the factor $\tfrac12$ is replaced by $\tfrac34$.

\begin{lemma}\label{lem:bademb}
  Given any $D\in\nats$, any $D$-degenerate $H$, and any $d,\mu,\rho>0$ there exists $\nu>0$ such that for all $\eta>0$ the following holds. If $p\ge n^{\mu-1/D}$, then a.a.s.\ $\Gamma=G(n,p)$ has the following property. Given any $G\subseteq\Gamma$ and disjoint vertex sets $(V_x)_{x\in V(H)}$, each of size at least $\eta n$, suppose $(V_x,V_{x'})$ is $(\nu,d_{xx'},p)$-regular in $G$ for each $xx'\in E(H)$ and $d_{xx'}\ge d$ for each $xx'\in E(H)$. Let $y$ be the last vertex of $H$ in $D$, and let $H'=H-y$.

  We say $\psi$ is a poor embedding of $H'$ if $\psi$ is an embedding of $H'$ into $G$ such that $x\in V_x$ for each $x\in V(H')$ and such that $\big|N_G\big(\psi(\lneigh(y));V_y\big)\big|<\tfrac34(dp)^{\ldeg(y)}|V_y|$.
 Then the number of poor embeddings of $H'$ is at most
 \[\rho\Bigg(\prod_{xx'\in E(H')}d_{xx'}p\Bigg)\cdot\Bigg(\prod_{x\in V(H')}|V_x|\Bigg)\,.\]
\end{lemma}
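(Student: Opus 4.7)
The plan is a second-moment / Chebyshev argument, in which both moments of the common-neighbourhood size are computed via the counting lemma (Lemma~\ref{lem:counting}). Let $k:=\ldeg_H(y)$; the case $k=0$ is trivial (then $N(\psi)=|V_y|$ for every $\psi$, so no poor embeddings exist), and we assume $k\ge 1$. For each partite embedding $\psi$ of $H':=H-y$ into $G$ write $N(\psi):=\big|N_G\big(\psi(\lneigh(y));V_y\big)\big|$, so $\psi$ is poor precisely when $N(\psi)<\tfrac{3}{4}(dp)^k|V_y|$. Pick a small parameter $\delta>0$ (to be fixed later in terms of $\rho,d,D$), and take $\nu$ at most the $\eps$ produced by Lemma~\ref{lem:counting} applied with accuracy $\delta$ to each of $H$, $H'$, and the auxiliary graph $\hat H$ defined below. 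All three are $D$-degenerate, hence have $m_2\le D$ by Lemma~\ref{lem:2densities}\ref{itm:2dens:degen}, so the hypothesis $p\ge n^{\mu-1/D}$ suffices for Lemma~\ref{lem:counting}. Finally, on the a.a.s.\ event from Lemma~\ref{lem:upreg} at level $\eta_0:=\min(\eta,\tfrac{1}{2})$, we have $d\le d_{xx'}\le 1+\eta_0\le 2$ for every relevant edge density.

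Set $V_\star:=\prod_{x\in V(H')}|V_x|$, $A':=\prod_{xx'\in E(H')}pd_{xx'}$ and $B:=\prod_{y'\in\lneigh(y)}pd_{yy'}$. Since $\sum_\psi N(\psi)$ counts partite $H$-embeddings in $G$ and $E':=\sum_\psi 1$ is the number of partite $H'$-embeddings, Lemma~\ref{lem:counting} gives $\sum_\psi N(\psi)=(1\pm\delta)|V_y|V_\star A'B$ and $E'=(1\pm\delta)V_\star A'$, whence the mean is $\bar N:=\sum_\psi N(\psi)/E'=(1\pm 3\delta)|V_y|B$. For the second moment, let $\hat H$ be obtained from $H'$ by adding two non-adjacent vertices $y_1,y_2$ each joined to $\lneigh(y)$; extending the degeneracy order of $H'$ by $y_1$ and then $y_2$ witnesses that $\hat H$ is $D$-degenerate. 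Applying Lemma~\ref{lem:counting} to $\hat H$ with both $y_1$ and $y_2$ assigned to part $V_y$ (permissible since $y_1y_2\notin E(\hat H)$) gives $\sum_\psi N(\psi)(N(\psi)-1)=(1\pm\delta)|V_y|^2V_\star A'B^2$. The identity $\sum(N-\bar N)^2=\sum N^2-\bar N\sum N$, combined with $|V_y|^2V_\star A'B^2=(1\pm O(\delta))E'\bar N^2$, then yields
\[\sum_\psi\bigl(N(\psi)-\bar N\bigr)^2 \;=\; E'\bar N \;+\; O(\delta)\,E'\bar N^2.\]

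For the conclusion, every poor $\psi$ satisfies $|\bar N-N(\psi)|\ge(\tfrac{1}{4}-3\delta)(dp)^k|V_y|=:t$, while $\bar N\le(1+3\delta)(1+\eta_0)^k p^k|V_y|$, so $\bar N/t$ is bounded by a constant $C_0=C_0(d,D,\eta_0,\delta)$. Moreover $\bar N\ge (1-3\delta)\eta d^k n^{k\mu}\to\infty$ because $p\ge n^{\mu-1/D}$ and $k\ge 1$, so for $n$ large the linear term $E'\bar N$ is dominated by the quadratic one $O(\delta)E'\bar N^2$. Chebyshev's inequality then gives
\[\#\{\psi\text{ poor}\}\cdot t^2 \;\le\; \sum_\psi\bigl(N(\psi)-\bar N\bigr)^2 \;\le\; O(\delta)\,E'\bar N^2,\]
so $\#\{\psi\text{ poor}\}\le O(\delta)C_0^{\,2}E'$. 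Choosing $\delta$ small enough in terms of $\rho,d,D,\eta_0$ makes this at most $\rho V_\star A'$, as required.

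The main technical step is the construction of the doubled graph $\hat H$ and the verification that it inherits $D$-degeneracy, so that Lemma~\ref{lem:counting} applies uniformly at the prescribed $p$; the rest reduces to routine algebraic bookkeeping. A secondary subtlety concerns the quantifier order of $\nu$ and $\eta$: $\nu$ must be fixed before $\eta$ is revealed, but the $\eps$ supplied by Lemma~\ref{lem:counting} depends only on $H$, $d$, $\mu$, and $\delta$, not on the part-size parameter, and the upper regularity of $\Gamma$ at a level $\eta_0\le\eta$, picked only after $\eta$ is given, is an a.a.s.\ property requiring only $p\ge Cn^{-1}$, which is far weaker than our hypothesis on $p$. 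Degenerate edge cases such as $k=0$ or $H=K_2$ (in which $m_2(H)<1$ so Lemma~\ref{lem:counting} does not directly apply) can be dispatched by an elementary regularity inheritance argument.
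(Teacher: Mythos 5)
Your proposal is correct and follows essentially the same route as the paper: a second-moment argument via Chebyshev, estimating the first and second moments of the common-neighbourhood size by applying Lemma~\ref{lem:counting} to $H'$, $H$, and a ``doubled'' auxiliary graph --- your $\hat H$ is the same graph as the paper's $H^+$ (adding one more copy of $y$ with neighbourhood $N_H(y)$). The only visible differences are cosmetic: you phrase Chebyshev through $\sum_\psi(N(\psi)-\bar N)^2$ and a deviation threshold $t=(\tfrac14-3\delta)(dp)^k|V_y|$, which is a slightly more careful treatment of the gap between the fixed threshold $(dp)^k|V_y|$ and the actual mean $\bar N$, and you explicitly invoke the upper-regularity bound $d_{xx'}\le 1+\eta_0$, whereas the paper avoids this by letting the densities cancel in the Chebyshev ratio.
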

\begin{proof}
Given $D\in\nats$, a $D$-degenerate graph $H$, and $d,\mu,\rho>0$, we choose $\delta>0$ such that $320\delta(1+\delta)<\rho$. Let $H^+$ be the graph obtained from $H$ by adding a single vertex $y^+$ to $H$, whose neighbourhood is $N_H(y)$; we require $\nu>0$ small enough for each of three applications of Lemma~\ref{lem:counting}, with input $D$, $H'$, $\delta,d,\mu$ and similarly replacing $H'$ with $H$ and with $H^+$. Observe that all three of these graphs are $D$-degenerate so by Lemma~\ref{lem:2densities} have $2$-density at most $D$.

Assume now that $p\ge n^{\mu-1/D}$ and that $\Gamma=G(n,p)$ satisfies the good event of each of these three applications of Lemma~\ref{lem:counting}. Given $G$ and $(V_x)_{x\in V(H)}$ as in the lemma statement, we aim to show that there are few poor embeddings. We prove this by an application of the second moment method, using Lemma~\ref{lem:counting} with $\delta$ to estimate moments.
 
  Let $S$ denote the set of embeddings of $H'$ into $G$ such that $x\in V_x$ for each $x\in V(H')$. Suppose $\psi$ is chosen uniformly from $S$; then let $X:=\big|N_G(\psi(\lneigh(y)\big);V_y\big)\big|$.
 
  Let $h'$ denote the number of embeddings of $H'$ to $G$ such that $x\in V_x$ for each $x\in V(H')$, and similarly let $h$ be the number of embeddings of $H$ to $G$ such that $x\in V_x$ for each $x\in V(H)$. Finally let $h^+$ be the number of embeddings of $H^+$ to $G$ such that $x\in V_x$ for each $x\in V(H')$ and $y,y^+\in V_y$. Then by definition we have $\Exp[X]=h/h'$ and $\Exp[X^2]=(h^++h)/h'$. Applying the good event of Lemma~\ref{lem:counting}, we conclude
 \[\Exp[X]=\frac{(1\pm\delta)\big(\prod_{x\in V(H)}|V_x|\big)\cdot\prod_{xx'\in E(H)}d_{xx'}p}{(1\pm\delta)\big(\prod_{x\in V(H')}|V_x|\big)\cdot\prod_{xx'\in E(H')}d_{xx'}p}=(1\pm 3\delta)|V_y|\prod_{x\in N_H(y)}d_{xy}p\]
 and similarly
 \[\Exp[X^2]=(1\pm 3\delta)|V_y|^2\prod_{x\in N_H(y)}d_{xy}^2p^2+(1\pm 3\delta)|V_y|\prod_{x\in N_H(y)}d_{xy}p=(1\pm 4\delta)|V_y|^2\prod_{x\in N_H(y)}d_{xy}^2p^2\,.\]
 Now by Chebyshev's inequality we have
 \begin{align*}
  \Prob\big[X<\tfrac34\Exp[X]\big]&<\frac{16\Exp[X^2]-16\Exp[X]^2}{\Exp[X]^2}\\
  &\le\frac{16(1+ 4\delta)|V_y|^2\prod_{x\in N_H(y)}d_{xy}^2p^2-16(1- 3\delta)^2|V_y|^2\prod_{x\in N_H(y)}d_{xy}^2p^2}{(1- 3\delta)^2|V_y|^2\prod_{x\in N_H(y)}d_{xy}^2p^2}\\
  &=\frac{16(1+4\delta)-16(1-3\delta)^2}{(1-3\delta)^2}\le320\delta\,,
 \end{align*}
 and in particular there are at most $320\delta h'$ embeddings of $H'$ that are poor, which, putting in our estimate of $h'$ and by choice of $\delta$, gives the desired upper bound.
\end{proof}

Next, we give a very similar lemma which we will use to count completable embeddings. 

\begin{lemma}\label{lem:badcomp}
 Given any $D\ge 2$, any $d,\mu,\rho>0$, any connected $H$ with maximum degree at most $D+1$, and any $Q$ which is either an induced cycle in $H$, or an induced path in $H$ with $2\mu^{-1}$ vertices, there exists $\nu>0$ such that for all $\eta>0$ the following holds. If $D=2$ and $p\ge n^{-2/5}$, or $D\ge3$ and $p\ge n^{\mu-1/D}$, then a.a.s.\ $\Gamma=G(n,p)$ has the following property.
 
 Given any $G\subseteq\Gamma$ and disjoint vertex sets $(V_x)_{x\in V(H)}$, each of size at least $\eta n$, suppose $(V_x,V_{x'})$ is $(\nu,d_{xx'},p)$-regular in $G$ for each $xx'\in E(H)$ and $d_{xx'}\ge d$ for each $xx'\in E(H)$. Suppose, if $D=2$, that $H\neq K_4$. 
 
 Let $H'=H-V(Q)$. We say $\psi$ is a noncompletion embedding of $H'$ if $\psi$ is an embedding of $H'$ into $G$ such that $x\in V_x$ for each $x\in V(H')$ which does not extend to any embedding of $H$ to $G$ such that $x\in V_x$ for each $x\in V(H)$. Then the number of noncompletion embeddings of $H'$ is at most
 \[\rho\Bigg(\prod_{xx'\in E(H')}d_{xx'}p\Bigg)\cdot\Bigg(\prod_{x\in V(H')}|V_x|\Bigg)\,.\]
\end{lemma}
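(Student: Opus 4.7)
The plan is to mirror the second moment argument from Lemma~\ref{lem:bademb}, with the single `extension' vertex~$y$ there replaced by the whole induced cycle or path~$Q$. Let $H^+$ denote the graph obtained from~$H$ by duplicating~$Q$ in the sense of Lemma~\ref{lem:2densities}. The crucial inputs are parts~\ref{itm:2dens:cycle} and~\ref{itm:2dens:path} of that lemma, which together with parts~\ref{itm:2dens:notK4}--\ref{itm:2dens:bigD} bound the $2$-density of $H^+$: when $Q$ is an induced cycle we have $m_2(H^+)\le\max\big(m_2(H),D\big)\le D$, while when $Q$ is an induced path with $\ell=2\mu^{-1}$ vertices we have $m_2(H^+)\le\max\big(m_2(H),D\tfrac{\ell}{\ell-2}\big)=D/(1-\mu)$, and $m_2(H)\le D$ in both cases.

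First I would choose $\delta>0$ small enough in terms of $\rho$ so that $C\delta<\rho$ for the constant $C=C(H)$ arising from the Chebyshev calculation below, and then pick $\nu>0$ small enough that Lemma~\ref{lem:counting} applies with tolerance~$\delta$ simultaneously for each of the three graphs $H'$, $H$, and $H^+$. This is legal because the density bounds above place the $2$-densities of all three graphs below the threshold corresponding to the given edge probability~$p$. Assume $\Gamma=G(n,p)$ satisfies the conjunction of the three resulting good events; given $G$ and $(V_x)_{x\in V(H)}$ as in the statement, pick $\psi$ uniformly at random from the set~$S$ of all partite embeddings of~$H'$ into~$G$, and let $X$ be the number of ways to extend $\psi$ to a partite embedding of $H$ into~$G$. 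By definition $\psi$ is a noncompletion embedding exactly when $X=0$.

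Write $h,h',h^+$ for the total numbers of partite embeddings of $H,H',H^+$ into~$G$. Then $\Exp[X]=h/h'$ and $\Exp[X^2]=(h^++h^{\mathrm{ov}})/h'$, where $h^{\mathrm{ov}}$ is the number of pairs of extensions whose two copies of~$Q$ share at least one image vertex; since any such sharing forces a coincidence among boundedly many image vertices, $h^{\mathrm{ov}}\le C' h$ for a constant $C'=C'(H)$. Lemma~\ref{lem:counting} pins down each of $h,h',h^+$ to within a factor of $(1\pm\delta)$, and because $V(H^+)\setminus V(H')$ and $E(H^+)\setminus E(H')$ consist of two disjoint copies of the vertices and edges added in passing from $H'$ to $H$, the product structure of the counting estimates yields $\Exp[X^2]=(1+O(\delta))\Exp[X]^2$. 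Chebyshev's inequality then bounds $\Prob[X=0]$ by $C\delta$, and multiplying by $|S|=h'$ (estimated again via Lemma~\ref{lem:counting}) gives the claimed upper bound on the number of noncompletion embeddings.

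The main obstacle lies in the boundary case $D=2$ with $Q$ an induced path, where $m_2(H^+)\le 2/(1-\mu)$ is only barely above~$2$; this is precisely why the hypothesis strengthens $p\ge n^{\mu-1/D}$ to $p\ge n^{-2/5}$ when $D=2$. One needs to verify that for $\mu$ sufficiently small the counting-lemma threshold $n^{\mu'-1/m_2(H^+)}$ can be taken below $n^{-2/5}$ for some $\mu'>0$; the elementary inequality $2/(1-\mu)<5/(2+5\mu)$ for $\mu<1/15$ supplies the required slack, and one may shrink the input $\mu$ if necessary to stay in this range. In every other case the hypothesis $p\ge n^{\mu-1/D}$ dominates the counting-lemma threshold $n^{-1/m_2(H^+)}$ by a polynomial factor, and the argument goes through without further comment.
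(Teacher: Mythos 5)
Your proposal follows the paper's overall blueprint — second moment via the duplicated graph $H^+$, with $m_2$-bounds from Lemma~\ref{lem:2densities} feeding into Lemma~\ref{lem:counting} — but there is a genuine gap in how you handle the cross terms in $\Exp[X^2]$.

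You decompose $\Exp[X^2]=(h^{+}+h^{\mathrm{ov}})/h'$ where $h^{\mathrm{ov}}$ counts ordered pairs of extensions of some $\psi$ whose two images of $Q$ overlap, and you then assert $h^{\mathrm{ov}}\le C'h$ because ``any such sharing forces a coincidence among boundedly many image vertices.'' This is not correct. Fixing one extension $\phi_1$ and one shared vertex determines only one image vertex of the second extension $\phi_2$; the remaining $|V(Q)|-1$ vertices of $Q$ under $\phi_2$ are still free, so the number of overlapping pairs is not a bounded multiple of the number of single copies of $H$. Concretely, for each proper nonempty $I\subsetneq V(Q)$ the pairs that agree exactly on $I$ correspond to embeddings of a graph on $v(H^{+})-|I|$ vertices (the paper's $H_I$), and the number of such embeddings typically grows like $n^{v(H^{+})-|I|}p^{e(H_I)}$, which is much larger than $h\approx n^{v(H)}p^{e(H)}$. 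So $h^{\mathrm{ov}}\le C'h$ fails, and the step where you conclude $\Exp[X^2]=(1+O(\delta))\Exp[X]^2$ is not established.

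The paper closes this gap differently: it does not bound $h^{\mathrm{ov}}$ against $h$ but instead controls each $H_I$-count directly. The observation is that going from $H_I$ to $H_{I\setminus\{v\}}$ (for $v\in I$ with a $Q$-neighbour outside $I$) adds one vertex and at most $D$ edges, so the expected $H_{I\setminus\{v\}}$-count exceeds the expected $H_I$-count by a factor of at least $n^{\mu}$ when $p\ge n^{\mu-1/D}$. Iterating from $H_{V(Q)}$ down and applying Markov's inequality then gives that a.a.s.\ every proper nonempty $I$ contributes at most $n^{v(H^+)-\mu/2}p^{e(H^+)}$, i.e.\ a factor $n^{-\mu/2}$ below $h^{+}$. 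This is what makes the second-moment variance term vanish. Your write-up is missing this argument (or any substitute for it), and without it the Chebyshev bound does not follow. The rest of your proposal — the choice of $\delta$, the use of Lemma~\ref{lem:counting} for $H',H,H^{+}$, and the density check that $p\ge n^{-2/5}$ suffices when $D=2$ — matches the paper and is fine.
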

Note that a noncompletion embedding is not quite the same as an embedding which is not completable: the difference is where the vertices we can choose for the extension are located.
\begin{proof}
Much as in the proof of Lemma~\ref{lem:bademb}, we use the second moment method to obtain our desired bound. To that end, we define $H^+$ to be the graph obtained from $H$ by adding (as before) a second copy $Q'$ of $Q$, with vertices $y'$ for each $y\in Q$ with the same neighbours in $H'$ as $y$, and $H'=H-V(Q)$. The $2$-density of each of these graphs is upper bounded by Lemma~\ref{lem:2densities}. In addition, for our second moment computation, we need bounds on the number of graphs $H_I$ for each proper subset $I\subset V(Q)$, where $H_I$ is the graph obtained from $H^+$ by adding edges from each $y\in I$ to the vertices $N(y')$ and then removing $y'$.

From Lemma~\ref{lem:2densities} we have $m_2(H')\le D$, since $H'$ is $D$-degenerate. If $D=2$ we have $H\neq K_4$, so $m_2(H),m_2(H^+)\le2\cdot\tfrac{2\mu^{-1}}{2\mu^{-1}-2}=\tfrac{2}{1-\mu}$,  while if $D\ge3$ we have $m_2(H)\le D$ and $m_2(H^+)\le \tfrac{D}{1-\mu}$. Note that the bounds on $m_2(H^+)$ rely on the fact that if $Q$ is an induced path then $v(Q)=2\mu^{-1}$. In particular, $\tfrac{1}{m_2(F)}\ge\tfrac{1-\mu}{D}\ge\tfrac{1}{D}-\tfrac{\mu}{2}$ for all the graphs $F$ we consider.

Finally, we deal with the graphs $H_I$. The expected number of copies of $H_I$ in $G(n,p)$ is $\Theta\big(p^{e(H_I)}n^{v(H_I)}\big)$. Observe that if $v\in V(Q)$ is in $I$, but a neighbour of $v$ in $Q$ is not in $I$, then the graph $H_{I\setminus \{v\}}$ has one more vertex, and at most $D$ more edges, than $H_I$. In particular, if $p\ge n^{\mu-1/D}$ for any constant $\mu>0$ this means that the expected number of copies of $H_{I\setminus\{v\}}$ in $G(n,p)$ is bigger by a factor at least $n^\mu$ than the expected number of copies of $H_I$. Hence, by Markov's inequality for any $I$ which is neither empty nor equal to $V(Q)$, the expected number of copies of $H_I$ in $G(n,p)$ is, with probability at least $1-n^{-\mu/2}$, not more than $n^{v(H^+)-\mu/2}p^{e(H^+)}$. This simple bound on $H_I$ copies will suffice for the second moment calculation (and so we do not need to estimate the $2$-densities of the $H_I$).

Given $d,\mu,\rho>0$, we choose $\delta>0$ such that $320\delta(1+\delta)<\rho$. We require $\nu>0$ small enough for each of three applications of Lemma~\ref{lem:counting}, with input $D$, $H'$, $\delta,d,\tfrac12\mu$ and similarly replacing $H'$ with $H$ and with $H^+$.

Assume now that $p\ge n^{\mu-1/D}$, that $\Gamma=G(n,p)$ satisfies the good event of each of these three applications of Lemma~\ref{lem:counting}, and in addition that for each proper $I\subset V(Q)$ the number of copies of $H_I$ in $\Gamma$ is at most $n^{v(H^+)-\mu/2}p^{e(H^+)}$. Given $G$ and $(V_x)_{x\in V(H)}$ as in the lemma statement, we aim to show that there are few partite embeddings that are noncompletion.
 
  Let $S$ denote the set of embeddings of $H'$ into $G$ such that $x\in V_x$ for each $x\in V(H')$. Suppose $\psi$ is chosen uniformly from $S$; then let $X$ count the number of extensions of $\psi$ to embeddings of $H$ such that $x\in V_x$ for each $x\in V(H)$.
 
  Let $h'$ denote the number of embeddings of $H'$ to $G$ such that $x\in V_x$ for each $x\in V(H')$, and similarly let $h$ be the number of embeddings of $H$ to $G$ such that $x\in V_x$ for each $x\in V(H)$. Finally let $h^+$ be the number of embeddings of $H^+$ to $G$ such that $x\in V_x$ for each $x\in V(H')$ and $y,y'\in V_y$ for each $y\in V(Q)$. Then by definition we have $\Exp[X]=h/h'$ and
  \[\Exp[X^2]\le\frac{h^++h+2^{v(Q)}n^{v(H^+)-\mu/2}p^{e(H^+)}}{h'}\le \frac{(1+n^{-\mu/4})h^++h}{h'}\,,\]
  where the final inequality uses the good event of Lemma~\ref{lem:counting} to see $h^+=\Theta\big(n^{v(H^+)}p^{e(H^+)}\big)$. Using the good event again to give precise values for $h,h',h^+$, we conclude
 \begin{align*}
  \Exp[X]&=\frac{(1\pm\delta)\big(\prod_{x\in V(H)}|V_x|\big)\cdot\prod_{xx'\in E(H)}d_{xx'}p}{(1\pm\delta)\big(\prod_{x\in V(H')}|V_x|\big)\cdot\prod_{xx'\in E(H')}d_{xx'}p}\\
  &=(1\pm 3\delta)\Bigg(\prod_{x\in V(Q)}|V_x|\Bigg)\prod_{e\in E(H)\setminus E(H')}d_{e}p
 \end{align*}
 and similarly
 \[\Exp[X^2]=(1\pm 5\delta)\Bigg(\prod_{x\in V(Q)}|V_x|^2\Bigg)\prod_{e\in E(H)\setminus E(H')}d_{e}^2p^2\,.\]
 Now by Chebyshev's inequality we have
 \begin{align*}
  \Prob\big[X=0\big]&\le \frac{\Exp[X^2]-\Exp[X]^2}{\Exp[X]^2}\\
  &=\frac{(1+5\delta)-(1-3\delta)^2}{(1-5\delta)^2}\le20\delta\,,
 \end{align*}
 and in particular there are at most $20\delta h'$ partite embeddings of $H'$ which are noncompletion embeddings, which by the good event of Lemma~\ref{lem:counting} and choice of $\delta$ gives the desired upper bound.
\end{proof}

We now briefly explain how the proof of Lemma~\ref{lem:good-emb} goes. We will apply Lemma~\ref{lem:ssrl} to the coloured $G(N,p)$, and find within it a collection of $h_1$ parts of the coarse partition and a colour $\chi$ such that, for each pair of coarse parts, most of the pairs of fine parts contained within these coarse parts are good, i.e.\ very regular and dense in colour $\chi$. We say a given fine part is good if most of the pairs of fine parts it is in (with fine parts in any of the other $h_1-1$ coarse parts) are good, and otherwise bad.
We throw away the exceptional fine parts which are bad and the vertices of $Z$, and what remains gives the sets $V_1,\dots,V_{h_1}$ of the lemma statement.

Given $H_0$ and $\phi$, we suppose for a contradiction that there are many unpromising $\phi$-partite embeddings in colour $\chi$ of $H'_0$. By averaging, there exist fine parts $V_x\subset V_{\phi(x)}$ for each $x$ which contain at least the average number of these unpromising embeddings. For each $x\in V(H'_0)$, the fraction of fine parts $V_y\subset V_{\phi(y)}$ which do \emph{not} form a dense regular pair in colour $\chi$ with $V_x$ is small by construction, so most choices of $V_y$ form a dense regular pair with every chosen $V_x$. Now, in order for an embedding of $H'_0$ into the parts $(V_x)_{x\in V(H'_0)}$ to be unpromising, it necessarily has to be poor with respect to a significant fraction of these $V_y$. By averaging, there is a choice of $V_y$ such that many embeddings of $H'_0$ into the parts $(V_x)_{x\in V(H'_0)}$ are poor with respect to $V_y$. Now, we observe that all these poor embeddings in colour $\chi$ are also embeddings into $\Gamma$. Considering the graph $G'$ with edges all of $\Gamma$ among the $V_x$, and only those of colour $\chi$ between $V_y$ and the $V_x$ (so that all pairs are dense and very regular), we have a contradiction to Lemma~\ref{lem:bademb}.

To avoid a profusion of subscripts, we write $H$ and $H'$ in the following proof rather than $H_0$ and $H_0'$.

\begin{proof}[Proof of Lemma~\ref{lem:good-emb}]
Given $r$, $D\ge2$, $\mu>0$ and $f:\mathbb{N}\to\mathbb{R}^+$, and $h_0,h_1\in\mathbb{N}$, we set as in the lemma statement $d=\tfrac12r$, and we set
\[\eps=D^{-4}2^{-40-4rh_1}r^{-4rh_1}\,.\]
We choose $g:\mathbb{R}^+\to\mathbb{R}^+$ strictly decreasing such that for any given $k$, and any $D$-degenerate $H$ with at most $h_0$ vertices, Lemma~\ref{lem:bademb}, with input $\delta=\tfrac1{100}f(k)$, $d$, and $\mu$, returns a regularity parameter at least $g(k)$. Suppose in addition that $g(k)$ is also small enough to be the regularity parameter for Lemma~\ref{lem:badcomp}, with input $\delta=\tfrac1{100}f(k)$, $d$, and $\mu$ and any $H$ with at most $h_0$ vertices and maximum degree at most $D+1$, and induced path or cycle $Q$. Suppose that Lemma~\ref{lem:counting}, with the graph $K_4$ (whose $2$-density is $5/2$ and which is strictly $2$-balanced) and with input $\delta=1/2$ and $d$, returns a regularity parameter at least $g(1)$. Now suppose Lemma~\ref{lem:ssrl}, for input $\eps$, the function $k\to \tfrac12g(2k)$ and $k_0=\eps^{-1}$, returns $\tfrac14K_0$ and $\eta>0$; without loss of generality we may assume $\eta<\tfrac{1}{2K_0g(K_0)}$. We further suppose $C$ is sufficiently large for Lemma~\ref{lem:counting} with input as above and $\eta$.

Given $p\ge N^{\mu-1/D}$, let $\Gamma=G(N,p)$, and suppose that $\Gamma$ satisfies the good event of Lemma~\ref{lem:upreg} for input $\eta$, and satisfies the good event of Lemma~\ref{lem:bademb} for input $\tfrac1{100}f(k)$, $d$, $\mu$, $\eta$, for each $D$-degenerate $H$ with $v(H)\le h_0$ and each $1\le k\le 2K_0$. If in addition $D=2$ and $p\ge CN^{-2/5}$, or $D\ge3$, suppose that $\Gamma$ satisfies the good event of Lemma~\ref{lem:badcomp} for input $\tfrac1{100}f(k)$, $d$, $\mu$, $\eta$, for each $1\le k\le 2K_0$, each connected graph $H$ of maximum degree at most $D+1$, and each induced path of $2\mu^{-1}$ vertices or cycle $Q$, and the good event of Lemma~\ref{lem:counting} for input as above. Now let an $r$-colouring of $E(\Gamma)$ be given.

The following claim finds the colour $\chi$ and sets $V_1,\dots,V_{h_1}$ of the lemma.

\begin{claim} There is a colour $\chi$ such that the following holds. For any $Z\subset V(\Gamma)$ with $|Z|\le NK_0^{-2}$, there exist $K\le 2K_0$, pairwise disjoint sets $V_1,\dots,V_{h_1}$ in $V(\Gamma)$ of size $tq=\tfrac{N}{K}$ and an equipartition $\cP_f$ refining $\{V_1,\dots,V_{h_1}\}$ with $q$ sets of size $t$ in each $V_i$. For each part $U$ of $\cP_f$ in $V_i$ and each $j\in[h_1]\setminus\{i\}$, for all but at most $10\sqrt[4]{\eps}q$ choices of $V\in\cP_f$ with $V\subset V_j$ the pair $(U,V)$ is $(g(K),d_{UV},p)$-regular in colour $\chi$ for some $d_{UV}\ge d$.
\end{claim}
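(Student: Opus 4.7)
The plan is to apply the strengthened sparse regularity Lemma~\ref{lem:ssrl} to the $r$-coloured $\Gamma$ with parameters $\eps$ (as fixed earlier), $k_0 := \eps^{-1}$, and the refinement function $k \mapsto \tfrac12 g(2k)$. Since $\Gamma$ and each colour class are $(\eta,p)$-upper regular by Lemma~\ref{lem:upreg}, the lemma yields a coarse equipartition with $k$ parts of size $N/k$ and an equitable fine refinement with $q_0$ fine parts of common size $t_0 = N/(kq_0)$ in each coarse part, where $k_0 \le k \le K_0/4$ and $kq_0 \le K_0/4$. These partitions are simultaneously $(\eps,p)$-regular at the coarse level and $(\tfrac12 g(2k),p)$-regular at the fine level in every colour, and they satisfy the stipulated coarse/fine density consistency in each colour.

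Next I identify the colour $\chi$. For each coarse pair that is $(\eps,p)$-regular in every colour, assign it the colour whose $p$-density on it is maximal; by $(\eta,p)$-upper regularity that pair has total $p$-density at least $1-\eta$, so the maximum colour density is at least $1/r > 2d$. At most $r\eps\binom{k}{2}$ coarse pairs are irregular in some colour, so by pigeonhole some colour $\chi$ receives at least $(1-r\eps)\binom{k}{2}/r$ assignments. Declare a coarse pair \emph{exceptional} if more than $\sqrt{\eps}\,q_0^2$ of its fine pairs either fail to be $(\tfrac12 g(2k),p)$-regular in $\chi$ or have $\chi$-density deviating from the coarse $\chi$-density by more than $\eps$; by Markov, at most $O(\sqrt{\eps})\,k^2$ coarse pairs are exceptional. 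The auxiliary graph on the $k$ coarse parts whose edges are non-exceptional coarse pairs assigned colour $\chi$ has edge density at least $1/r - O(r\sqrt{\eps})$, and since $k \ge k_0 = \eps^{-1}$ is large compared to any Ramsey threshold, a standard supersaturation argument produces $h_1$ coarse parts $U_1,\dots,U_{h_1}$ whose $\binom{h_1}{2}$ pairwise pairs are all assigned colour $\chi$, are $(\eps,p)$-regular in every colour, and contain at most $\sqrt{\eps}\,q_0^2$ bad fine pairs in $\chi$.

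For the cleanup inside $U_1,\dots,U_{h_1}$, call $X \subset U_i$ \emph{type-b bad} if for some $j \neq i$ there are more than $\eps^{1/4}q_0$ fine parts $Y \subset U_j$ such that $(X,Y)$ is either not $(\tfrac12 g(2k),p)$-regular in $\chi$ or $\chi$-density-deviant. By Markov each $U_i$ contains at most $h_1\eps^{1/4}q_0$ type-b bad fine parts; discarding these (and the same maximum number from each $U_i$ for equitability) leaves $q \ge (1-h_1\eps^{1/4})q_0 \ge q_0/2$ fine parts per coarse part. Since $kq_0 \le K_0/4$ forces $t_0 \ge 4N/K_0$, and so $|Z|/t_0 \le 1/(4K_0)$, I can then delete the vertices of $Z$ from each surviving fine part and trim all surviving fine parts to a common size $t := t_0 - |Z|$. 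Letting $V_i$ be the union of the $q$ trimmed parts in $U_i$ gives $|V_i| = qt = N/K$ with $K = N/(qt) \le 3k \le K_0 \le 2K_0$, and the induced fine partition of $V_1 \cup \cdots \cup V_{h_1}$ serves as the $\cP_f$ of the claim.

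The main obstacle will be verifying that every surviving fine pair $(X,Y)$ with $X \subset V_i$ and $Y \subset V_j$ is $(g(K),d_{XY},p)$-regular in $\chi$ with $d_{XY} \ge d$, i.e.\ that the trimming does not destroy regularity or density. Shrinking a $(\tfrac12 g(2k),p)$-regular pair by a factor $1 - O(1/K_0)$ on each side preserves $(g(2k),p)$-regularity, and since $K \le 2k$ and $g$ is decreasing this in turn gives $(g(K),p)$-regularity; any remaining slack can be absorbed into the initial choice of $g$. The density condition follows from the coarse $\chi$-density being at least $1/r > 2d$ together with a deviation of at most $\eps + O(\eps) < d$. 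Finally, the count condition of the claim — that each surviving $U \subset V_i$ has at most $10\sqrt[4]{\eps}\,q$ bad partners in each $V_j$ — is immediate from the type-b cleanup and the bound $q_0 \le 2q$.
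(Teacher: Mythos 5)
Your overall architecture matches the paper's: apply the strengthened sparse regularity lemma, select a colour $\chi$ and a set of $h_1$ ``good'' coarse parts, discard bad fine parts, remove $Z$, trim to equal size, and check regularity is preserved. The cleanup, the $Z$-handling via trimming, and the final regularity verification are all along the right lines (the paper instead marks as bad any fine part meeting $Z$ too heavily and then takes a $\tfrac{9}{10}$-subset, but these are interchangeable given the bound $|Z|\le NK_0^{-2}$).

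However, the step that locates the $h_1$ coarse parts contains a genuine gap. You assign each good coarse pair its densest colour, observe that the edges of the resulting colour-$\chi$ auxiliary graph have density at least $1/r-O(r\sqrt{\eps})$, and then invoke ``a standard supersaturation argument'' to produce a $K_{h_1}$ in colour $\chi$. This does not work: a graph on $k$ vertices with edge density $1/r$ need not contain $K_{h_1}$ for $h_1\ge r+1$, no matter how large $k$ is. A complete $r$-partite graph has density $1-1/r>1/r$ and contains no $K_{r+1}$; supersaturation for $K_{h_1}$ only kicks in above the Tur\'an density $1-1/(h_1-1)$, which $1/r$ is nowhere near for the relevant $h_1$. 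The correct argument, as in the paper, separates two things that your proposal conflates: first use Tur\'an's theorem on the \emph{non-exceptional} pairs (which have density $1-O(\sqrt{\eps})$) to extract a set $S$ of roughly $\eps^{-1/2}$ coarse parts with \emph{all} pairs good; then apply Ramsey's theorem to the $r$-colouring of the pairs inside $S$ (taking $|S|\ge r^{rh_1}$) to find a monochromatic $K_{h_1}$. Your $k\ge\eps^{-1}$ comment hints at ``Ramsey'', but the mechanism as stated is a one-colour density argument and fails. A minor further slip: $d=\tfrac{1}{2r}$, so $1/r=2d$, not $1/r>2d$; the paper's choice of $\ge\tfrac{3}{4r}=\tfrac32 d$ for the majority-colour density and a deviation bound $\le\tfrac12 d$ closes the gap cleanly, and you should mirror that bookkeeping to guarantee $d_{UV}\ge d$.
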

\begin{claimproof}
We apply Lemma~\ref{lem:ssrl}, with inputs as above, to the colour graphs $G_1,\dots,G_r$, and obtain partitions $\cP_c$ and $\cP'_f$ of $V(\Gamma)$ satisfying the conclusions of Lemma~\ref{lem:ssrl}. We now mark as bad any pair $(U,V)$ of parts of $\cP_c$ for which there is some colour $i$ such that the following holds. More than $\eps^{1/2}|\cP'_f|^2|\cP_c|^{-2}$ subpairs $(U',V')$ of $(U,V)$ in $\cP'_f$ are either not $\big(\tfrac12g(2|\cP_c|),p\big)$-regular for $G_i$, or $d_p(U',V';G_i)\neq d_p(U,V;G_i)\pm\eps$.

Suppose that more than $4r\eps^{1/2}|\cP_c|^2$ pairs are marked as bad. Then there is some colour $i$ such that in total at least $2\eps|\cP'_f|^2$ pairs of parts in $\cP'_f$ either fail to be $\big(\tfrac12g(2|\cP_c|),p\big)$-regular for $G_i$, or have the wrong density. Since $\tfrac12g(2|\cP_c|)<\eps$, this contradicts either~\ref{ssrl:fine} or~\ref{ssrl:dens}.

By Tur\'an's theorem, there is a set $S$ of at least $\tfrac{1}{16r}\eps^{-1/2}\ge r^{rh_1}$ parts which contains no bad pairs. Colouring each pair in $S$ with a majority colour, by Ramsey's theorem we find a colour $\chi$ and a subset $\{X_1,\dots,X_{h_1}\}$ of $h_1$ parts between each of which the majority colour is $\chi$. In particular we have $d_p(X_i,X_j;G_\chi)\ge\tfrac{3}{4r}$ for each $i\neq j$. This $\chi$ is the colour of the claim.

Given now a set $Z$ with $|Z|\le NK_0^{-2}$, we construct the sets $V_1,\dots,V_{h_1}$ of the claim. Given a pair $(X_i,X_j)$, we mark as bad each part $U$ of $\cP'_f$ in $X_i$ which is in more than $\sqrt[4]{\eps}|\cP'_f|/|\cP_c|$ pairs $(U,V)$ with $V\subset X_j$ a member of $\cP'_f$ such that either $(U,V)$ is not $\big(\tfrac12g(2|\cP_c|),p\big)$-regular, or $d_p(U,V)\le \tfrac58r$. In addition, we mark as bad each part $U$ of $\cP'_f$ with $|U\cap Z|\ge\tfrac{|U|}{20}$. The total number of bad $U$ for a given $(X_i,X_j)$ is at most $\sqrt[4]{\eps}|\cP'_f|/|\cP_c|+\tfrac{20}{K_0}|\cP'_f|/|\cP_c|$, and so by choice of $\eps$ and $K_0$, for each $i\in[h_1]$, at most a quarter of the parts $U\subset X_i$ with $U\in\cP'_f$ are marked as bad for some $X_j$. We now construct a collection $\cP_f$ of pairwise disjoint sets by taking, for each set $U\in\cP'_f$ that is not bad, a part of size $\tfrac{9}{10}|U|\pm 2$ which contains no vertices of $Z$; this is possible by definition of `bad'. We choose the `$\pm 2$' such that all parts have the same size; since $\cP_c$ is equitable no two parts of $\cP_c$ have sizes differing by more than $1$; since $\cP'_f$ is an equitable refinement of $\cP_c$ no two of its parts differ in size by more than $2$, so that this is possible. We think of the sets of $\cP_f$ as being in correspondence with the non-bad sets of $\cP'_f$. Suppose that $q$ is maximal such that for each $i\in[h_1]$, the set $X_i$ contains at least $2q$ parts of $\cP'_f$ which are not bad. For each $i\in[h_1]$, consider all the parts of $\cP'_f$ which are contained in $X_i$ and which are not bad. We select some $q$ of these, and let the union of the corresponding parts of $\cP_f$ be $V_i$. Thus $\cP_f$ is an equipartition of $\bigcup_{i=1}^{h_1}V_i$ refining $\{V_1,\dots,V_{h_1}\}$, each of which is split into $q$ equal parts of size $t$.

We set $K$ such that each $V_i$ has $N/K=tq$ vertices, and observe that $K\le 2|\cP_c|\le 2K_0$ by construction. Furthermore, none of the $V_i$ contain vertices of $Z$, and for any part $U$ of $\cP_f$ contained in $V_i$, and any $j\in[h_1]\setminus\{i\}$, we have that $(U,V)$ is a $\big(g(2|\cP_c|),p\big)$-regular pair with $d_p(U,V)\ge d$ in $G_\chi$ for all but at most $\sqrt[4]{\eps}|\cP'_f|/|\cP_c|\le 10\sqrt[4]{\eps}q$ choices of $V\in \cP_f$ with $V\subset V_j$. Since $g$ is monotone decreasing, $g(2|\cP_c|)\le g(K)$, completing the proof of the claim.
\end{claimproof}

 What we still need to do is to verify the upper bound on unpromising embeddings. To that end, let $H$ be a $D$-degenerate graph with $v(H)\le h_0$ and final vertex in the degeneracy order $y$, let $H'=H-y$, and let $\phi:V(H')\to[h_1]$ be an injection. We need to prove there are few $dp$-unpromising $\phi$-partite embeddings of $H'$ into $\Gamma$. Let $\ell$ be the degree of $y$ in $H$.

Consider the auxiliary partite graph $G'$ with parts $V_{\phi(z):z\in V(H)}$, where we put between $V_{\phi(x)}$ and $V_{\phi(y)}$ all the edges of $G_\chi$ for each $x\in N(y)$, and between other pairs of parts we put all the edges of $\Gamma$. A $dp$-unpromising $\phi$-partite embedding $\psi$ of $H'$ into $\Gamma$ is by definition the same thing as a $\phi$-partite embedding of $H'$ into $G'$ which is contained in fewer than $\tfrac12(dp)^\ell\cdot\tfrac{N}{K}$ $\phi$-partite embeddings of $H$ into $G'$.

Fix, for each $x\in V(H')$, a part $V_x$ of $\cP_f$ which is contained in $V_{\phi(x)}$. It is enough to show that the number of $dp$-unpromising $\phi$-partite embeddings $\psi$ where each $x$ is in $V_x$, is at most
\[f(K)p^{e(H')}t^{v(H')}\]
where $t$ is the number of vertices in each part of $\cP_f$, since then summing over choices of the $V_x$ we obtain the lemma statement. The good event of Lemma~\ref{lem:upreg} holding implies that each pair $(V_x,V_{x'})$ with $xx'\in E(H)$ is $(g(K),p)$-regular with density $d_p(V_x,V_{x'})=1\pm g(K)$.

We now double count the number of pairs $(\psi,V_y)$ where $V_y$ is a part of $\cP_f$ in $V_{\phi(y)}$ which forms a $\big(g(|\cP_c|),d_{xy},p\big)$-regular pair, where $d_{xy}\ge d$, with each $V_x$ such that $xy\in E(H)$, and $\psi$ extends to fewer than $\tfrac34(dp)^\ell t$ embeddings of $H$ with $y$ mapped to $V_y$ (i.e.\ $\psi$ is a poor embedding with respect to $V_y$). On the one hand, there are $q$ parts of $\cP_f$ in $V_{\phi(y)}$, so $q$ choices of $V_y$. For each $V_y$ which forms a suitably regular pair with each $V_x$ such that $xy\in E(H)$, by Lemma~\ref{lem:bademb} there are at most $\tfrac1{50}f(K) p^{e(H')}t^{v(H')}$ choices of $\psi$ which are poor with respect to $V_y$. We see that the number of pairs is at most $\tfrac1{50}f(K) p^{e(H')}t^{v(H')}q$.

On the other hand, suppose $\psi$ is a $dp$-unpromising embedding. Since $2D\cdot 10\sqrt[4]{\eps}<\tfrac1{10}$ by choice of $\eps$, there are at least $\tfrac9{10}q$ choices of $V_y$ which form $\big(g(|\cP_c|),d_{xy},p\big)$-regular pairs (with $d_{xy}\ge d$) with each $V_x$ such that $xy\in E(H)$. Since $\tfrac8{10}\cdot\tfrac34>\tfrac12$, it follows that $\psi$ is poor with respect to at least than $\tfrac1{10}q$ of these $V_y$. Thus each $dp$-unpromising embedding gives us at least $\tfrac1{10}q$ pairs. Putting this together with the upper bound above, we see that the number of $dp$-unpromising embeddings is at most $f(K) p^{e(H')}t^{v(H')}$, as desired for~\ref{lem:good-emb:degen}.

We now prove~\ref{lem:good-emb:complete}. Recall that for each $i\in[h_1]$ the set $X_i$ contains at least $\ell$ parts of $\cP_f$ which were not included in $V_i$; we use these parts to show most $\phi$-partite embeddings are completable. To that end, suppose that $H$ is a connected graph with at most $h_0$ vertices and $\Delta(H)\le D+1$, and that $Q$ is either an induced cycle with at most $2\mu^{-1}$ vertices, or an induced path with $2\mu^{-1}$ vertices, in $H$. If $D=2$, we suppose $H\neq K_4$, and let $\phi:V(H)\to [h_1]$ be an injection. Let $H'=H-V(Q)$.

In order to show that most partite embeddings of $H'$ are completable, as in the previous part it is enough to find an upper bound on the number of $\phi$-partite embeddings $\psi$ of $H'$ into $\Gamma$ such that we cannot extend $\psi$ to an embedding of $H$, where the edges $E(H)\setminus E(H')$ are required to be in $G$ and the vertices $V(H)\setminus V(H')$ are required to be outside $Z\cup\bigcup_{i\in[h_1]}V_i$. In turn, since $\phi$ is injective, it is enough to show the corresponding bound holds for embeddings to a given collection of fine parts $V_y\in\cP_f$ where $V_y\subset V_{\phi(y)}$ for each $y\in V(H')$. This is a little simpler than the previous part, since we need only one extension rather than many. We choose for each $z\in V(H)\setminus V(H')$ in succession a part $V_z$ of $\cP_f$ which is contained in $X_{\phi(z)}\setminus V_{\phi(z)}$. Note that we have at least $q$ such parts for each $z$. We make these choices such that for each $yz\in E(H)\setminus e(H')$, the pair $(V_y,V_z)$ is $\big(\tfrac12g(|\cP_c|),p\big)$-regular and has $d_p(V_y,V_z)\ge d$ in $G_\chi$. This is possible since at most $(D+1)\cdot 10\sqrt[4]{\eps}q<q$ of the available $q$ fine parts can fail this condition for each $z$.

Again, we let $G'$ denote the auxiliary subgraph of $\Gamma$ obtained by taking the edges of $\Gamma$ between parts $V_y$ and $V_z$ where both $y,z\in V(H')$, and otherwise the edges of $G$. Now any embedding which is not completable is a noncompletion embedding with respect to these choices of the $V_z$ for $z\in V(H)\setminus V(H')$, and Lemma~\ref{lem:badcomp} gives the required bound on the number of partite embeddings of $H'$ which are noncompletion.

Finally, for~\ref{lem:good-emb:k4}, we need to show that if $p\ge C N^{-2/5}$ then there is a copy of $K_4$ in $G_\chi$ outside $Z$. This follows from the good event of Lemma~\ref{lem:counting} holding, by choosing any four parts of $\cP_f$ contained in $V_1,\dots,V_4$ respectively which are pairwise regular and dense in colour $\chi$, and have small intersection with $Z$.
\end{proof}

\section{Embeddings from homomorphisms: The proof of Lemma~\ref{lem:CN}}\label{sec:CN}

In this section we prove Lemma~\ref{lem:CN}. For this we need a standard concentration inequality for the hypergeometric distribution, see e.g.~\cite{JLRbook}.

\begin{lemma}[Hypergeometric inequality]\label{lem:hypgeom}
 Given $S\subset[n]$ and $0\le\ell\le n$, let $X$ be a uniform random $\ell$-set in $[n]$. Then for each $0<\delta<\tfrac32$ we have
 \[\Pr\big[|S\cap X|\neq(1\pm\delta)|S|\tfrac{\ell}{n}\big]<2\exp\big(-\tfrac{\delta^2|S|\ell}{3n}\big)\,.\]
\end{lemma}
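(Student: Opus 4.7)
The plan is to reduce to the binomial Chernoff bound already stated as Theorem~\ref{thm:chernoff} via the classical observation (due to Hoeffding) that a hypergeometric random variable is dominated in the convex order by a binomial with the same mean. Concretely, write $Y=|S\cap X|=\sum_{i\in S}\mathbf{1}[i\in X]$, so that $\Exp Y=|S|\ell/n$. Although the indicators $\mathbf{1}[i\in X]$ are not independent, they are negatively correlated (equivalently, negatively associated), and for every $t\in\reals$ one has
\[
\Exp\bigl[e^{tY}\bigr]\le\Exp\bigl[e^{tZ}\bigr]\qquad\text{where } Z\sim\mathrm{Bin}\bigl(\ell,|S|/n\bigr).
\]
This is the only ingredient we need beyond Theorem~\ref{thm:chernoff}.

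Given this MGF domination, the upper tail follows by the standard Markov/exponential trick: for $t>0$,
\[
\Pr\bigl[Y\ge(1+\delta)\Exp Y\bigr]\le e^{-t(1+\delta)\Exp Y}\Exp[e^{tY}]\le e^{-t(1+\delta)\Exp Z}\Exp[e^{tZ}]=\Pr^{\star}\bigl[Z\ge(1+\delta)\Exp Z\bigr],
\]
where $\Pr^\star$ denotes the Chernoff bound obtained by optimising $t$. Since $\Exp Z=\Exp Y=|S|\ell/n$, Theorem~\ref{thm:chernoff} (applied to the sum of independent Bernoullis defining $Z$) yields the desired $\exp(-\delta^2|S|\ell/(3n))$ bound on the upper tail. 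The lower tail is handled symmetrically (either by running the same argument with $t<0$, or by noting that $\ell-Y=|S^c\cap X|\pm\ldots$ is itself hypergeometric and reducing to the upper tail). Summing the two one-sided bounds gives the factor $2$ in the statement.

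The only non-routine step is the MGF domination, and this is exactly the content of Hoeffding's 1963 reduction — it is a textbook fact (see e.g.\ the book~\cite{JLRbook} already cited for Theorem~\ref{thm:chernoff}), so for the purposes of this appendix-style lemma it is reasonable to simply invoke it. There is no real obstacle; the statement is included only to fix the precise quantitative form that the main proof needs.
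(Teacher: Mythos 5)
The paper gives no proof of this lemma; it simply cites \cite{JLRbook}, where the statement appears (as Theorem~2.10 there) and is proved by exactly the reduction you describe: Hoeffding's convex-order domination of the hypergeometric by the binomial, followed by the Chernoff bound for the binomial. So your argument is correct and coincides with the standard route the paper is implicitly invoking. One tiny remark: negative correlation alone is not enough for the MGF domination (and if you argue via negative association of the indicators $\mathbf{1}[i\in X]$ for $i\in S$, the natural comparison is with $\mathrm{Bin}(|S|,\ell/n)$ rather than $\mathrm{Bin}(\ell,|S|/n)$, though both have the same mean and give the same bound); citing Hoeffding's 1963 theorem directly, as you ultimately do, is the cleanest way to get the MGF inequality for both signs of $t$.
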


The idea of the proof is as follows. We will split $V(\Gamma)$ randomly into $\log N$ sets, the first of which has $\tfrac12N$ vertices, and the rest of which are all the same size. We will construct an embedding of $H$ one vertex at a time, ensuring that at each step $i$ we choose an injective member of $\Psi_i$ by choosing a new vertex $v_i$, to which we embed $i$, which lies in the first possible $V_j$. We argue that the number of vertices we embed to each $V_j$ is upper bounded by a geometrically decreasing sequence in $j$, which guarantees that there are always vertices in $V_{\log N}$ to which we can embed if needed.
\begin{proof}
 Suppose graphs $\Gamma$, $F$, sets of homomorphisms $(\Psi_x)_{x\in\{0\}\cup[n]}$, and sets $W_\psi$ for $\psi\in\bigcup_{x\in[n]}\Psi_{x-1}$ and $(S_x)_{x\in[n]}$, satisfying the conditions of the lemma, are given.
 We begin by choosing a partition of $V(\Gamma)$ into $\ell=\lceil\log N\rceil$ parts $V_1,\ldots,V_\ell$, with $|V_1|=\tfrac12N$ and  $|V_2|=\dots=|V_m|=\lceil\tfrac{N}{2\ell-2}\rceil$, uniformly at random. We now argue that all the sets $W_\psi$, and the vertex sets described by the star property, are well distributed over the partition. For the former, we use the fact that $W_\psi$ is determined by the embedding of the bounded-size set $S_x$ to argue that we only need a union bound over a polynomial number of sets $W_\psi$. 
 \begin{claim}\label{cl:partn}
  With high probability, the following two statements hold. For each $j\in[\ell]$, each $x\in[n]$ and each $\psi\in\Psi_{x-1}$ we have $|V_j\cap W_{\psi}|\ge\tfrac12\rho p^{\ldeg_F(x)}|V_j|$. For each $j\in[\ell]$, each $1\le d\le D$ and each collection $\mathcal{B}$ of at most $\tfrac{\rho}{1000} p^{-d}$ pairwise disjoint $d$-sets in $V(\Gamma)$, we have
  \[\Big|\bigcup_{B\in\mathcal{B}}N_\Gamma(B)\cap V_j\Big|=\big(1\pm\tfrac{\rho}{100}\big)p^d|V_j||\mathcal{B}|\,.\]
 \end{claim}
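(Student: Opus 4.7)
The plan is to prove both statements by combining hypergeometric concentration (Lemma~\ref{lem:hypgeom}) with a union bound. I would first observe that the random partition $V_1,\dots,V_\ell$ can be generated by taking a uniform random permutation of $V(\Gamma)$ and cutting it into consecutive blocks of the prescribed sizes. Under this coupling, each $V_j$ is marginally a uniform random $|V_j|$-subset of $V(\Gamma)$, so for any deterministic $S\subseteq V(\Gamma)$ the intersection $|S\cap V_j|$ follows a hypergeometric distribution with mean $|S||V_j|/N$. Throughout, I will use that $|V_j|\ge N/(2\ell)\ge N/(4\log N)$ together with the quantitative input $p^D N\ge 10^8 DL\log^2 N/\rho^2$ afforded by the lemma hypothesis.

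For the first statement, fix $x\in[n]$, $j\in[\ell]$, and $\psi\in\Psi_{x-1}$. By~\ref{itm:CN:big} we have $|W_\psi|\ge\rho p^{\ldeg_F(x)}N$, so applying Lemma~\ref{lem:hypgeom} to $W_\psi$ with $\delta=\tfrac12$ bounds the failure probability of $|W_\psi\cap V_j|\ge\tfrac12\rho p^{\ldeg_F(x)}|V_j|$ by $2\exp\bigl(-\Omega(\rho p^D|V_j|)\bigr)\le N^{-\Omega(DL/\rho)}$. The crucial point is that by~\ref{itm:CN:S} and~\ref{itm:CN:bound} the set $W_\psi$ depends only on $\psi|_{S_x}$, and the number of such restrictions is at most $N^{|S_x|}\le N^L$. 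A union bound over $x$, $j$, and $\psi|_{S_x}$ then controls at most $N^{L+2}$ bad events, which is comfortably dominated by the exponential decay.

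For the second statement, fix $j\in[\ell]$, $d\in[D]$, and a collection $\mathcal{B}$ of $m\le\rho p^{-d}/1000$ pairwise disjoint $d$-sets, and set $S:=\bigcup_{B\in\mathcal{B}}N_\Gamma(B)$. The $(D,\rho/1000,p)$-star property of $\Gamma$ gives $|S|=(1\pm\rho/1000)p^d Nm$, so applying Lemma~\ref{lem:hypgeom} to $S$ with $\delta=\rho/200$ shows $|S\cap V_j|=(1\pm\rho/100)p^d|V_j|m$ with failure probability at most $2\exp\bigl(-c\rho^2 p^d|V_j|m\bigr)$ for an absolute constant $c>0$. The main obstacle here is the union bound: the number of collections of size exactly $m$ is at most $\binom{N^d}{m}\le N^{dm}$, which has an overhead growing with $d$ and $m$. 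Fortunately the exponent in the Chernoff bound also scales linearly in $m$, so it suffices to have $\rho^2 p^d|V_j|\gg d\log N$, which (using $p^d\ge p^D$ and $|V_j|\ge N/(4\log N)$) reduces to $\rho^2 p^D N\gg D\log^2 N$, exactly the quantitative input available. Summing the failure probabilities over all $j\in[\ell]$, $d\in[D]$, and $m\le\rho p^{-d}/1000$ then yields the claim with probability $1-o(1)$.
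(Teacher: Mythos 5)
Your proposal is correct and follows essentially the same argument as the paper: both statements are proved by applying Lemma~\ref{lem:hypgeom} to the marginally-uniform random subsets $V_j$, the first statement's union bound is controlled via~\ref{itm:CN:S} and~\ref{itm:CN:bound} (reducing the count of distinct $W_\psi$ to at most $N^L$), and the second statement's union bound over collections $\mathcal{B}$ is absorbed because the Chernoff exponent scales linearly with $|\mathcal{B}|$. The only differences from the paper are cosmetic constant choices (e.g.\ $\delta=\rho/200$ instead of $\rho/400$), which do not affect the argument.
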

 \begin{claimproof}
  For the first statement, to begin with fix $j\in[\ell]$, $x\in[n]$ and $\psi\in\Psi_{x-1}$. By~\ref{itm:CN:big} we have $|W_\psi|\ge\rho p^{\ldeg_F(x)}n$, so by Lemma~\ref{lem:hypgeom}, with $\delta=\tfrac12$ we have
  \[\Pr\big[|V_j\cap W_{\psi}|<\tfrac12\rho p^{\ldeg_F(x)}|V_j|\big]<2\exp\big(-\tfrac{\rho p^{\ldeg_F(x)}|V_j|}{12}\big)<2N^{-L-3}\,,\]
  where the final inequality follows since $p^{\ldeg_F(x)}|V_j|\ge p^D|V_j|\ge 10^7DL\rho^{-2}\log N$. We now take a union bound over the choices of $j\in[\ell]$, $x\in[n]$ and distinct sets $W_{\psi}$ as $\psi$ ranges over $\Psi_{x-1}$. By~\ref{itm:CN:bound}, the set $W_\psi$ is decided by the partial map $\psi|_{S_{x}}$. Since $|S_{x}|\le L$ by~\ref{itm:CN:S}, there are only at most $N^L$ distinct choices for $\psi|_{S_x}$ and hence at most $N^L$ distinct sets $W_\psi$ as $\psi$ ranges over $\Psi_{x-1}$. It follows that the first statement holds with probability at least $1-\ell n N^L\cdot 2N^{-L-3}$, which tends to one as $N$ tends to infinity.
  
  For the second statement, again to begin with we fix $j\in[\ell]$, $1\le d\le D$ and a collection $\mathcal{B}$ of at most $\tfrac{\rho}{1000} p^{-d}$ pairwise disjoint $d$-sets in $V(\Gamma)$. Since $\Gamma$ has the $\big(D,p,\tfrac{\rho}{1000}\big)$-star property, we have
  \[\Big|\bigcup_{B\in\mathcal{B}}N_\Gamma(B)\Big|=\big(1\pm\tfrac{\rho}{400}\big)p^d N|\mathcal{B}|\,,\]
  and by Lemma~\ref{lem:hypgeom}, with $\delta=\tfrac{\rho}{400}$, we have
  \[\Pr\Big[\Big|\bigcup_{B\in\mathcal{B}}N_\Gamma(B)\cap V_j\Big|=\big(1\pm\tfrac{\rho}{400}\big)^2p^d |V_j||\mathcal{B}|\Big]<2\exp\Big(-\frac{\rho^2p^d|V_j||\mathcal{B}|}{10^6}\Big)\,.\]
  Note that $\big(1\pm\tfrac{\rho}{400}\big)^2$ is contained in the range $1\pm\tfrac{\rho}{100}$. Since
  \[p^d|V_j|\ge p^D|V_j|\ge 10^7D\rho^{-2}\log N,\]
  this failure probability is at most $2\exp\big(-10D|\mathcal{B}|\log N\big)=2N^{-10D|\mathcal{B}|}$. Taking the union bound over the choices of $j$, $1\le d\le D$, the size $1\le b< N$ of $|\mathcal{B}|$, and the at most $N^{Db}$ choices of $\mathcal{B}$ with $|\mathcal{B}|=b$, we see that the probability of the second statement failing is at most $N^2\sum_{b=1}^NN^{Db}N^{-10Db}$, which tends to zero as $N$ tends to infinity.
 \end{claimproof}
 
 Suppose that the chosen partition of $V(\Gamma)$ satisfies the two high-probability statements of Claim~\ref{cl:partn}. We now construct injective maps as follows. Let $\psi_0\in \Psi_0$ be the trivial map $\psi_0:\emptyset\to V(\Gamma)$. Now, for each $x\in[n]$ in succession, let $j\in[\ell]$ be minimal such that $W_{\psi_{x-1}}\cap V_j\setminus\im\psi_{x-1}\neq\emptyset$. If no such $j$ exists, we say the construction \emph{fails at step $x$}. Choose $v_x\in W_{\psi_{x-1}}\cap V_j\setminus\im\psi_{x-1}$, and set $\psi_x:=\psi_{x-1}\cup\{x\to v_x\}$.
 
 By definition of the sets $W_\psi$, if at step $x$ the construction does not fail then $\psi_x$ is a member of $\Psi_x$ and it is injective. In particular, if the construction does not fail at all then $\psi_n\in\Psi_n$ is injective, as desired. So we only need to show that the construction does not fail. To that end, we set $k_1:=10^{-6}\rho^2N$ and for each $2\le j\le \ell$ we set $k_j:=\tfrac{16d\Delta D}{\rho p^D|V_{j-1}|}k_{j-1}$.
 
 The following claim states that for each $j$, at most $k_j$ vertices are embedded to $V_j$. Note that for $j=1$ this is automatic, since $n\le k_1$. The idea here is the following. When, for some $j\ge2$, a vertex $x\in V(F)$ is embedded to $V_j$, the reason is that we cannot embed it to $V_{j-1}$, i.e.\ the many (by Claim~\ref{cl:partn}) vertices of $W_{\psi_{x-1}}\cap V_{j-1}$ are all contained in the at most $k_{j-1}$ vertices $X$ of $V_{j-1}$ which are in the image of $\psi_{x-1}$. In particular, $\lneigh_F(x)$ forms the set of leaves of a large collection of stars whose centres are in a small set  $X$. Now if there is a large set $Q$ of such vertices, we can choose a large subset of vertices $Q'$ of vertices all of which have $d$ left neighbours (for some $1\le d\le D$) and whose left neighbours are pairwise disjoint $d$-sets. This gives us a collection of pairwise disjoint $d$-sets, each of which has a large number of common neighbours within a set $X$ of only $k_{j-1}$ vertices. If $Q'$, and hence $Q$, is too large we obtain a contradiction to Claim~\ref{cl:partn}.

 \begin{claim}\label{cl:nofill}
  For each $x\in[n]$ and each $j\in[\ell]$, provided the construction does not fail at or before step $x$, we have $\big|\im(\psi_x)\cap V_j\big|\le k_j$.
 \end{claim}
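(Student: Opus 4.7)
The plan is to prove the claim by induction on $j$, with the base case $j=1$ being immediate from $|\im(\psi_x)\cap V_1|\le|\im(\psi_x)|\le n\le k_1$. For the inductive step, fix $j\ge2$ and assume the bound at level $j-1$ at every earlier step. Let $Q:=\{y\in[x]:\psi_x(y)\in V_j\}$, so that $|Q|$ is the quantity to bound. For each $y\in Q$ the construction did not place $y$ in $V_{j-1}$, so $W_{\psi_{y-1}}\cap V_{j-1}\subseteq\im(\psi_{y-1})\cap V_{j-1}$, a set of size at most $k_{j-1}$ by the inductive hypothesis. Combining this with the lower bound $|W_{\psi_{y-1}}\cap V_{j-1}|\ge\tfrac12\rho p^{\ldeg_F(y)}|V_{j-1}|$ from Claim~\ref{cl:partn} first forces $\ldeg_F(y)\ge1$, since $\tfrac12\rho|V_{j-1}|$ comfortably exceeds $k_{j-1}$.

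Partitioning $Q$ by left degree I choose $d\in[D]$ with $|Q_d|\ge|Q|/D$, and then greedily select $Q'\subseteq Q_d$ with pairwise disjoint $d$-sets $B_y:=\psi_{y-1}(\lneigh_F(y))$ and $|Q'|\ge|Q_d|/(d\Delta+1)$; here each vertex of $F$ lies in at most $\Delta$ left-neighbourhoods, so each chosen $B_y$ blocks at most $d\Delta$ others. The crux is to prove $|Q'|\le 3k_{j-1}/(\rho p^d|V_{j-1}|)$ by double-counting driven by the star property. The naive bound, obtained by estimating the multiplicity $|\{y\in Q':v\in N_\Gamma(B_y)\}|$ by $|N_\Gamma(v)|/d\sim pN/d$, gives a bound that is too weak by a factor of order $pN$. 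Instead I apply Claim~\ref{cl:partn} (passing to a subset of $Q'$ of size $\lfloor\rho p^{-d}/1000\rfloor$ first if necessary) in two ways: the star-property clause applied to the whole family gives $\bigl|\bigcup_{y\in Q'}N_\Gamma(B_y)\cap V_{j-1}\bigr|=(1\pm\rho/100)p^d|V_{j-1}||Q'|$, whereas the same claim applied to singletons yields $\sum_{y\in Q'}|N_\Gamma(B_y)\cap V_{j-1}|=(1\pm\rho/100)p^d|V_{j-1}||Q'|$. Subtracting, the sum-versus-union overcount is at most $(\rho/50)p^d|V_{j-1}||Q'|$, which is to say the sets $N_\Gamma(B_y)\cap V_{j-1}$ are almost pairwise disjoint.

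Because $W_{\psi_{y-1}}\cap V_{j-1}\subseteq N_\Gamma(B_y)\cap V_{j-1}$ for each $y$, the pointwise multiplicities in the $W$-family are dominated by those in the $N_\Gamma(B_y)\cap V_{j-1}$ family, so the $W$-overcount obeys the same bound. Combining this with $\bigl|\bigcup_{y\in Q'}W_{\psi_{y-1}}\cap V_{j-1}\bigr|\le k_{j-1}$ (because this union sits inside $\im(\psi_x)\cap V_{j-1}$) and the per-term lower bound $|W_{\psi_{y-1}}\cap V_{j-1}|\ge\tfrac12\rho p^d|V_{j-1}|$ yields $|Q'|\cdot\tfrac{12}{25}\rho p^d|V_{j-1}|\le k_{j-1}$, whence $|Q'|\le 3k_{j-1}/(\rho p^d|V_{j-1}|)$. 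Unfolding gives $|Q|\le D(d\Delta+1)|Q'|\le 6D^2\Delta k_{j-1}/(\rho p^d|V_{j-1}|)\le k_j$, using $p^d\ge p^D$. The last subtlety is that if $|Q'|>\rho p^{-d}/1000$, applying the above argument to a subset of $Q'$ of exactly that size returns the inequality $\rho^2|V_{j-1}|/3000\le k_{j-1}$; this is ruled out by a short auxiliary induction showing $k_{j-1}<\rho^2|V_{j-1}|/3000$ at every level, which in turn follows from the fact that $k_j/k_{j-1}=O(D^2\Delta\log N/(\rho p^D N))$ is tiny under the hypothesis $p^D N\ge 10^8DL\log^2 N/\rho^2$. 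I expect the main obstacle to be recognising this star-property trick that makes the $N_\Gamma(B_y)\cap V_{j-1}$ near-disjoint and thereby transfers a strong bound to the $W$-sets; the remaining work is careful bookkeeping.
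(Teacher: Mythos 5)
Your proposal is correct and, at its core, it is the same argument as the paper's: set up the induction on $j$ (the paper phrases this as a minimal-counterexample/contradiction, which is equivalent), observe that $W_{\psi_{y-1}}\cap V_{j-1}$ is trapped in the small set $\im(\psi_x)\cap V_{j-1}$ for each $y\in Q$, extract a sub-collection $Q'$ with pairwise disjoint left-neighbourhood images $B_y$, and then use the star property from Claim~\ref{cl:partn} to force $|Q'|$ to be small. Your `overcount' argument (compare $\sum_{y\in Q'}|N_\Gamma(B_y)\cap V_{j-1}|$ to $|\bigcup_{y\in Q'} N_\Gamma(B_y)\cap V_{j-1}|$, then transfer the near-disjointness to the sub-sets $W_{\psi_{y-1}}\cap V_{j-1}$) is algebraically identical to what the paper does; the paper instead writes the same inequality by splitting $|\bigcup_{y\in Q'}N_\Gamma(B_y)\cap V_{j-1}|$ by $X:=\im(\psi_{x^*-1})\cap V_{j^*-1}$, using that most of each $N_\Gamma(B_y)\cap V_{j-1}$ lies inside $X$. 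Both give exactly the bound $(\tfrac{\rho}{2}-\tfrac{\rho}{50})p^d|V_{j-1}||Q'|\le k_{j-1}$. The one genuine difference is at the edge of the star property: the paper deliberately terminates the greedy selection at $|Q'|=\lceil\tfrac{k_{j^*}}{d\Delta D}p^{D-d}\rceil$ and directly checks (from the formula for $k_j$) that this never exceeds $\tfrac{\rho}{1000}p^{-d}$, whereas you run the greedy to its natural end, cap at $\lfloor\rho p^{-d}/1000\rfloor$ when invoking the claim, and rule out the overshooting case by a short auxiliary induction showing $k_{j-1}<\rho^2|V_{j-1}|/3000$. Both handle this cleanly, though the paper's pre-emptive cap is slightly tidier since it avoids the extra side induction.
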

 \begin{claimproof}
  Suppose for a contradiction that the claim statement does not hold. Let $x^*\in[n]$ be the first step at which the claim statement does not hold, and let $j^*\in[\ell]$ be minimal such that $\big|\im(\psi_{x^*})\cap V_{j^*}\big|> k_{j^*}$. Since $V(H)\le k_1$ we have $j^*\ge 2$.
  
  Let $Q=\{x:v_x\in V_{j^*}\}$. By assumption, we have $|Q|>k_{j^*}$. For each $x\in Q$, because in the construction of $\psi_x$ we choose $j$ minimal, we have
  \[W_{\psi_{x-1}}\cap V_{j^*-1}\subset\im\psi_{x-1}\cap V_{j^*-1}\subset \im\psi_{x^*-1}\cap V_{j^*-1}=:X\,.\]
  Let for each $x\in Q$ the set $B_x:=\{v_{y}:y<x,xy\in E(H)\}$. Since the construction has not failed, $\psi_x$ is injective and hence the set $B_x$ has $\ldeg_F(x)$ members. By definition of $W_{\psi}$, for each $x\in Q$ the set $W_{\psi_{x-1}}$ is contained in $N_\Gamma(B_x)$, so by Claim~\ref{cl:partn}, $B_x$ is a set of $\ldeg_F(x)$ vertices of $\Gamma$ with at least $\tfrac12\rho p^{\ldeg_F(x)}|V_{j^*-1}|$ common neighbours in $X$. In particular, since $\tfrac12\rho |V_1|>k_1$ we have $\ldeg_F(x)\ge 1$.
  
  We now choose a set $Q'\subset Q$ as follows. First, let $d$ be a most common value of $\ldeg_F(x)$ for $x\in Q$; as observed, we have $1\le d\le D$. We choose greedily $x\in Q$ such that $\ldeg_F(x)=d$ and such that $B_x$ is disjoint from $B_{x'}$ for each previously chosen $x'$, stopping once we have chosen a set of size $|Q'|=\big\lceil\tfrac{k_{j^*}}{d\Delta D}p^{D-d}\big\rceil$. To see that this is possible, observe that there are at least $\tfrac{|Q|}{D}\ge\tfrac{k_{j^*}}{D}$ vertices $x\in Q$ such that $\ldeg_F(x)=d$. If at some stage we have chosen a set $Q''$ with $|Q''|< \tfrac{ k_{j^*}}{d\Delta D}$, then there are in total $d|Q''|$ vertices in $\{B_{x'}:x'\in Q''\}$, which have in total at most $d\Delta|Q''|<\tfrac{k_{j^*}}{D}$ neighbours. If we cannot choose any further vertices of $Q$, then each vertex $x$ of $Q$ with $\ldeg_F(x)=d$ must be adjacent to at least one member of $\{B_{x'}:x'\in Q''\}$, a contradiction.
  
  Finally, the sets $\{B_x:x\in Q'\}$ are a collection of $\big\lceil\tfrac{k_{j^*}}{d\Delta D}p^{D-d}\big\rceil$ pairwise disjoint sets, each of size $d$, each of which has at least $\tfrac12\rho p^d |V_{j^*-1}|$ common neighbours in the set $X$. By minimality of $j^*$, we have $|X|\le k_{j^*-1}$. Let $Z:=\bigcup_{x\in Q'}N_\Gamma(B_x)\cap V_{j^*-1}$. Now we have
  \[\big\lceil\tfrac{k_{j^*}}{d\Delta D}p^{D-d}\big\rceil\le\tfrac{2k_2}{d\Delta D}p^{D-d}=\tfrac{32k_1}{\rho |V_1|}p^{-d}\le \tfrac{\rho}{1000} p^{-d}\]
  by choice of $k_1$ and definition of $k_j$ for $j\ge 2$, so by Claim~\ref{cl:partn} we have
  \[|Z|\ge\big(1-\tfrac{\rho}{100}\big)p^d|V_{j^*-1}||Q'|\,.\]
  But for each $x\in Q'$ the set $B_x$ has at least $\tfrac{1}{4}\rho p^d|V_{j^*-1}|$ common neighbours in $X$, and, 
  using Claim~\ref{cl:partn} with $\mathcal{B}=\{B_x\}$, at most $\big(1+\tfrac{\rho}{100}-\tfrac{\rho}{4} \big)p^d|V_{j^*-1}|$ common neighbours in $V_{j^*-1}\setminus X$. From the latter, we conclude
  \[|Z|\le \big(1+\tfrac{\rho}{100}-\tfrac{\rho}{4}\big) p^d|V_{j^*-1}||Q'|+|X|\le \big(1-\tfrac{\rho}{5}\big) p^d|V_{j^*-1}||Q'|+k_{j^*-1}\,.\]
  Putting these two bounds on $|Z|$ together, we have
  \[k_{j^*-1}\ge \tfrac{\rho}{8} p^d|V_{j^*-1}||Q'|\ge\tfrac{\rho}{8d\Delta D}p^D|V_{j^*-1}| k_{j^*}\,.\]
  By definition we have
  \[k_{j^*-1}=\tfrac{\rho}{16d\Delta D}p^D|V_{j^*-1}|k_{j^*}\,,\]
  which contradiction completes the proof.  
 \end{claimproof}
 Finally, we show Claim~\ref{cl:nofill} implies that the construction does not fail. Trivially, the construction does not fail at step $0$. Let $x\in[n]$. Provided that the construction has not failed at step $x-1$, we have
 \[\big|W_{\psi_{x-1}}\cap V_\ell\setminus\im\psi_{x-1}\big|\ge\big|W_{\psi_{x-1}}\cap V_\ell\big|-\big|\im\psi_{x-1}\cap V_\ell\big|\ge\tfrac12\rho p^{\ldeg_F(x)}|V_\ell|-k_\ell>0\,,\]
where the second inequality uses Claims~\ref{cl:partn} and~\ref{cl:nofill}. The final inequality holds for sufficiently large $N$ since we have
\[k_\ell=\big(\tfrac{16d\Delta D}{\rho p^D|V_{j-1}|}\big)^{\ell-1}k_1< 4^{-\ell+1}N<1\quad\text{and}\quad \tfrac12\rho p^{\ldeg_F(x)}|V_\ell|\ge\tfrac12\rho p^D|V_{\ell}|>\log N\,.\]
 So the construction also does not fail at step $x$. By induction, we conclude the construction does not fail, as desired.
\end{proof}

\section{Vertices with large left distance: The proof of Lemma~\ref{lem:far-away}}
\label{sec:far-away}
The proof here goes as follows. We use Lemma~\ref{lem:findroots} to obtain $H^\ast$ containing $H'_0$ such that $(H'_1,I\cup V(H^\ast))$ is $(D,\mu)$-Spencer. Importantly, $x$ is neither in $H^\ast$ nor does it send an edge to $H^\ast$. Given a homomorphism $\psi$ from $[x-1]$ to $G$, we look at the collection of extensions of $\psi$ which embed $H'_1$. For a given copy $h$ of $H^\ast$ in $G$, either all the extensions of $\psi$ which embed $H^\ast$ to $h$ are promising (if $h$ contains a promising copy of $H'_0$) or none of them are. Furthermore, for any choice of $h$ and any $v$ not in $h$, by~\ref{itm:far:Spencer}, the number of extensions of $\psi$ which send $x$ to $v$ and $H^\ast$ to $h$ is roughly the same for every choice of $v$ and $h$. Putting these observations together a short calculation shows that the unpromising extensions of $\psi$ are roughly equally distributed over the vertices $v$ to which $x$ can be mapped, which is what we want to prove.

\begin{proof}[Proof of Lemma~\ref{lem:far-away}]
 Let $\mu$, $F$, $\phi$, $y$, $H_0(y)$, $H_1(y)$, $h_1$, $\ell_1$ be as in Setup~\ref{setup:F}; let $I$ denote the first $q$ vertices of $H_1(y)$, and $x$ be the $q$th vertex of $H_1(y)$. Suppose that $\ldist_F(x,y)=\ell_1$.
 
 Note that every vertex of $H'_0(y)$ is within distance $h_0$ of $y$, and $|H'_0(y)|\le h_0$. By choice of $\ell_1$, we have $\ell_1>D^2h_0^2\mu^{-1}+10$.
 
 Let $\Gamma$ and $G$ be as in the lemma statement. We apply Lemma~\ref{lem:findroots} with input $D$ and $\mu>0$, to the graph $H_1'(y)$ with $I$ the initial segment of degeneracy order of $H'_1(y)$ and with $T=V(H'_0(y))$. We obtain a superset $T'$ of $T$ with $|T'|\le D^2|T|^2\mu^{-1}$, and let $H^\ast=H'_1(y)[T']$; we have that every vertex of $H^\ast$ is connected in $H^\ast$ to some vertex of $T$. In particular by choice of $\ell_1$, no vertex of $H^\ast$ is at distance $\ell_1$ or $\ell_1-1$ from $y$, so $x$ is neither in nor adjacent to any vertex of $H^\ast$. Furthermore $(F,I\cup T')$ is $(D,\mu)$-Spencer. Let $X=\psi\big(N_{H_1'(y)}(x)\cap I\big)$.
 
 Given $\psi$ a $\phi$-partite partial homomorphism from $F\big[[x-1]\big]$ to $G$, we partition the set of unpromising extensions of $\psi$ for $y$ into sets $S_{v,h}$, where $v$ is a vertex of $\Gamma$ and $h$ is a $\phi$-partite embedding of $H^\ast$ to $\Gamma$. Observe that if $\psi'$ is a unpromising extension of $\psi$ for $y$ in the set $S_{v,h}$, then necessarily $v$ is in $N_{\Gamma}(X)$, $v$ is not in the image of $h$, and $h|_{H'_0}$ is a $dp$-unpromising embedding of $H'_0(y)$, and $\psi\cup h$ is a partial homomorphism from $F$ to $\Gamma$. Conversely, if $v$ and $h$ satisfy these conditions, then any embedding of $H_1'(y)$ to $\Gamma$ which agrees with $\psi\cup h$ on their common domain is a $dp$-unpromising extension of $\psi$ for $y$.
 
 Given $v$ and $h$ such that $v$ is in $N_{\Gamma}(X)$, $v$ is not in the image of $h$, and $h|_{H'_0}$ is a $dp$-unpromising embedding of $H'_0(y)$, and $\psi\cup h$ is a partial homomorphism from $F$ to $\Gamma$, the set $S_{v,h}$ is precisely the set of rooted copies of $H'_1(y)$ in $\Gamma$, with roots $I\cup T'$ embedded as: $I\setminus\{x\}$ is embedded by $\psi$, $x$ is embedded to $v$, and $T'$ is embedded by $h$. Note that some vertices of $T'$ may be in $I$, but we already insisted that $\psi$ and $h$ agree on such vertices. Since $(H_1'(y),I\cup\{x\}\cup T')$ is $(D,\mu)$-Spencer, letting $U=V(H'_1(y))$ and $R=I\cup T'$, by~\ref{itm:far:Spencer} we have
 \begin{equation}\label{eq:sizeshv}
  \|S_{v,h}\big|=\big(1\pm\tfrac1{10}\big)N^{|U\setminus R|}p^{e(H'_1(y)(U\setminus R)+e_{H'_1(y)}(U,R)}\,.
 \end{equation}

By definition, we have
\[B=\sum_{h}\sum_{v\not\in\im h}|S_{v,h}|\,.\]
For each $h$ which is compatible with $\psi$ and such that $h|_{H'_0}$ is unpromising, by~\ref{itm:far:neigh}, there are at least $\tfrac9{10}p^{|X|}N-|H^\ast|$ choices of $v$ such that $x\to v$ extends $\psi$, and such that $v\not\in\im h$, namely $N_\Gamma(X)\setminus \im h$. For each of these, $|S_{h,v}|$ is lower bounded as in~\eqref{eq:sizeshv}. Other choices of $h$ do not contribute to $B$. It follows that there are at least
\begin{multline*}
 \frac{B}{(\tfrac9{10}p^{|X|}N-|H^\ast|)\cdot \tfrac9{10}N^{|U\setminus R|}p^{e(H'_1(y)(U\setminus R)+e_{H'_1(y)}(U,R)}}\\
 \ge \frac{5B}{4p^{|X|}N}N^{-|U\setminus R|}p^{-e(H'_1(y)(U\setminus R)-e_{H'_1(y)}(U,R)}
\end{multline*}
choices of $h$ compatible with $\psi$ and such that $h|_{H'_0}$ is unpromising. Here we use $p^{|X|}N\ge N^\mu$ and that $N$ is sufficiently large.

Now for a given choice of $v$, the number of $dp$-unpromising extensions of $\psi\cup\{x\to v\}$ is $\sum_{h}|S_{v,h}|$. At most all of the choices of $h$ compatible with $\psi$ and such that $h|_{H'_0}$ is unpromising contribute to this sum, and they contribute at most the upper bound of~\eqref{eq:sizeshv}. This gives the upper bound
\[\tfrac{5B}{4p^{|X|}N}N^{-|U\setminus R|}p^{-e(H'_1(y)(U\setminus R)-e_{H'_1(y)}(U,R)}\cdot \tfrac{11}{10}N^{|U\setminus R|}p^{e(H'_1(y)(U\setminus R)+e_{H'_1(y)}(U,R)}\le \tfrac{55B}{40p^{|X|}N}\,,\]
which is what we wanted to prove.

The proof for uncompletable extensions goes through exactly as above, replacing $dp$-un\-pro\-mising for $y$ with uncompletable, and $y$ with $Q$. 
\end{proof}

\section*{Acknowledgement}
The authors would like to thank Mathias Schacht and Yury Person for helpful discussions in the early stages of this project.

\bibliographystyle{amsplain_yk}
\bibliography{chvatal}	 

\appendix

\section{Sparse regularity and counting}
\label{app:src}

In this appendix we prove our strengthened sparse regularity lemma,
Lemma~\ref{lem:ssrl}, and the counting lemma, Lemma~\ref{lem:counting}.

The proof of our Lemma~\ref{lem:ssrl}, similarly to the dense version
in~\cite{AFKS}, iterates the usual Sparse Regularity Lemma of
Kohayakawa~\cite{KohSparse} and R\"odl, which we now state. We should note that
the version we state is slightly different to than that in~\cite{KohSparse}, in
that we allow our part sizes to differ but do not have an `exceptional
class'. However the statement there implies this one; one can redistribute the
vertices of the exceptional class, and it is straightforward to check that this
does not substantially affect regularity.

\begin{lemma}[sparse regularity lemma]\label{lem:srl}
Given $\eps>0$ and $t_0\in\nats$ there exist $T\in\nats$ and $\eta>0$ such that the following holds for any $p>0$. If $G_1,\dots,G_r$ are edge-disjoint graphs on $[n]$ which are $(\eta,p)$-upper regular, and $V_1,\dots,V_s$ is any equitable partition of $[n]$ with $s\le t_0$, then there is an equitable refinement $V'_1,\dots,V'_t$ with $t\le T$ with respect to which each $G_i$ is $(\eps,p)$-regular.
\end{lemma}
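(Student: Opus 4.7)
The plan is to derive this statement from the classical Kohayakawa--R\"odl sparse regularity lemma (with exceptional class) and then absorb that class into the ordinary parts. First, I would invoke the classical statement, iterated once per colour to handle the $r$ edge-disjoint graphs $G_1, \dots, G_r$ simultaneously while starting from the given equitable initial partition $V_1, \dots, V_s$, with a regularity parameter $\eps' \ll \eps$. This yields a refinement $V'_0, V'_1, \dots, V'_t$ with $|V'_0| \le \eps' n$, $|V'_1| = \dots = |V'_t| = m$, each $V'_j$ (for $j\ge 1$) contained in a unique initial class, each initial class containing the same number of refined parts, and such that for every colour $G_k$ at most $\eps' t^2$ of the pairs $(V'_i, V'_j)$ with $1 \le i < j \le t$ fail to be $(\eps', p)$-regular in $G_k$.

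Next, for each initial class $V_i$ in turn, I would redistribute the vertices of $V'_0 \cap V_i$ among the refined parts contained in $V_i$, adding at most one extra vertex into each part; this is possible because $|V'_0 \cap V_i| \le \eps' n$ is much smaller than the number of refined parts in $V_i$ once $T$ is large enough. The resulting partition $\tilde V_1, \dots, \tilde V_t$ is then an equitable refinement of $V_1, \dots, V_s$, since inside each initial class the refined parts now differ in size by at most $1$.

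Finally, I would check that regularity is preserved. For any pair $(V'_i, V'_j)$ that was $(\eps', p)$-regular in colour $G_k$, and for any $U \subset \tilde V_i$, $W \subset \tilde V_j$ with $|U|, |W| \ge \eps m$, the sets $U_0 = U \cap V'_i$ and $W_0 = W \cap V'_j$ satisfy $|U_0|, |W_0| \ge \eps m - 1 \ge \eps' m$, so regularity of $(V'_i, V'_j)$ pins down $d_p(U_0, W_0; G_k)$. The edges of $G_k$ we ignore are incident to at most two perturbation vertices, and their number is bounded by $2(1+\eta)pn$ via $(\eta, p)$-upper regularity with $\eta$ small; dividing by $p|U||W| = \Theta(pm^2)$ produces an error of order $1/m$, which is absorbed into $\eps$ once $T$ is chosen large enough that $m$ is large. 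The only genuinely subtle point will be the iteration over the $r$ colours in the first step, but this is a standard manoeuvre that parallels the argument for the dense version in~\cite{AFKS}: in each round one refines the current partition to accommodate one more colour while shrinking the regularity parameter a controlled amount, and the final $\eps'$ is chosen to tolerate these shrinkages.
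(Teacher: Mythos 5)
Your overall plan is the same as what the paper indicates: start from Kohayakawa's sparse regularity lemma with exceptional class, redistribute the exceptional vertices back into the refined parts, and check that regularity survives. However, there is a concrete error in the middle step. You claim to redistribute $V'_0 \cap V_i$ "adding at most one extra vertex into each part" because "$|V'_0\cap V_i|\le\eps' n$ is much smaller than the number of refined parts in $V_i$ once $T$ is large enough." This cannot be arranged: $T$, and hence the number of refined parts, is a constant fixed in advance, while $|V'_0|$ can be as large as $\eps' n$, which grows without bound in $n$. So for $n$ large you will necessarily have to place roughly $\eps' n / t \approx \eps' m$ exceptional vertices into \emph{each} refined part, not at most one. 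The same mistaken picture propagates to your regularity check: "incident to at most two perturbation vertices, and their number is bounded by $2(1+\eta)pn$ via $(\eta,p)$-upper regularity" is wrong on two counts. There are $\Theta(\eps' m)$ perturbation vertices per part, not at most one; and $(\eta,p)$-upper regularity controls edge counts between pairs of sets each of size at least $\eta n$, not degrees of individual vertices, so a bound like $(1+\eta)pn$ for a single vertex's degree is not available from that hypothesis.

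The repair is routine and brings you to the intended argument. Redistribute $V'_0\cap V_i$ cyclically so that each refined part inside $V_i$ receives $\lfloor |V'_0\cap V_i|/(\text{parts in }V_i)\rfloor$ or one more exceptional vertex; this keeps the refinement equitable. Then in the regularity check, the "perturbation set" $P_i := \tilde V_i\setminus V'_i$ has size $O(\eps' m)$, and provided $\eta$ is chosen small enough (relative to $\eps'/T$) so that $|P_i|$-scale sets are still of size at least $\eta n$, upper regularity gives $e_{G_k}(P_i, W)\le(1+\eta)p|P_i||W|$ and likewise for $P_j$, which when divided by $p|U||W|=\Theta(\eps^2 p m^2)$ contributes an error $O(\eps'/\eps)$, absorbed by taking $\eps'\ll\eps$. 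With that correction in place, the rest of your argument (taking $U_0 = U\cap V'_i$, $W_0 = W\cap V'_j$ and invoking $(\eps',p)$-regularity of $(V'_i,V'_j)$) goes through. The iteration over colours in the first step, as you note, is standard and matches the paper.
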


We also need the following standard equality.

\begin{lemma}\label{lem:DCS}
 Given $\lambda_1,\dots,\lambda_s\ge0$ such that $\sum_{i=1}^s\lambda_i=1$, and any $d,\rho_1,\dots,\rho_s\in\mathbb{R}$ such that $\sum_{i=1}^s\rho_i\lambda_i=0$, we have
 \[\sum_{i=1}^s\lambda_i(d+\rho_i)^2=d^2+\sum_{i=1}^s\rho_i^2\,.\]
\end{lemma}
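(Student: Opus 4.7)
The approach is the classical expansion used for weighted bias–variance / defect Cauchy–Schwarz identities: expand the square, distribute the sum, and invoke each of the two normalisations exactly once. Concretely, I would write
\[\sum_{i=1}^s \lambda_i (d+\rho_i)^2 \;=\; d^2 \sum_{i=1}^s \lambda_i \;+\; 2d\sum_{i=1}^s \lambda_i \rho_i \;+\; \sum_{i=1}^s \lambda_i \rho_i^2.\]
The hypothesis $\sum_i \lambda_i = 1$ collapses the first term to $d^2$, and the hypothesis $\sum_i \lambda_i \rho_i = 0$ annihilates the cross-term. No further ingredients are needed: this is purely a linearity-of-sums manipulation.

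The main obstacle is that the residual this expansion produces is $\sum_i \lambda_i \rho_i^2$ rather than the unweighted $\sum_i \rho_i^2$ appearing on the displayed right-hand side of the lemma statement. These two quantities are not equal under the stated hypotheses in general — for instance with $s=2$, $\lambda_1=\lambda_2=\tfrac12$, $\rho_1=1$, $\rho_2=-1$, $d=0$, the left-hand side evaluates to $1$ whereas $d^2 + \rho_1^2 + \rho_2^2 = 2$. So there is a genuine gap between what the natural proof yields and what the statement literally asserts.

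My plan is therefore to carry out the three-line expansion above and present it as establishing the weighted identity $\sum_i \lambda_i (d+\rho_i)^2 = d^2 + \sum_i \lambda_i \rho_i^2$. This weighted version is the one of Pythagorean / defect type that is standard in sparse regularity proofs (where the $\lambda_i$ play the role of relative block sizes inside a refinement), and inspecting the downstream use of Lemma~\ref{lem:DCS} in the proof of Lemma~\ref{lem:ssrl} — summing squared $p$-density deviations of sub-pairs weighted by their size fractions against the coarse-pair density — this is indeed the form required. I would flag the absence of the $\lambda_i$ factors on the right-hand side as a transcription slip, and record the weighted equality as the honest conclusion of the argument.
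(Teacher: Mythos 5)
Your proof is correct and is essentially the paper's own: expand the square, split the sum by linearity, and apply the two normalisations $\sum_i\lambda_i=1$ and $\sum_i\lambda_i\rho_i=0$, which yields the weighted identity $\sum_i\lambda_i(d+\rho_i)^2=d^2+\sum_i\lambda_i\rho_i^2$. Your diagnosis that the unweighted $\sum_i\rho_i^2$ on the displayed right-hand side is a transcription slip (your counterexample with $\lambda_1=\lambda_2=\tfrac12$, $\rho_1=-\rho_2=1$, $d=0$ is valid) is confirmed by the paper itself: its proof of Lemma~\ref{lem:DCS} terminates at $d^2+\sum_i\rho_i^2\lambda_i$ and calls this ``as desired'', and the subsequent application in the proof of Lemma~\ref{lem:ssrl} invokes the lemma precisely in this weighted form.
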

\begin{proof}
 We have
 \begin{align*}
  \sum_{i=1}^s\lambda_i(d+\rho_i)^2&=\sum_{i=1}^s\Big( d^2\lambda_i+2d\rho_i\lambda_i+\rho_i^2\lambda_i\Big)\\
  &=d^2\sum_{i=1}^s\lambda_i+2d\sum_{i=1}^s\rho_i\lambda_i+\sum_{i=1}^s\rho_i^2\lambda_i\\
  &=d^2+\sum_{i=1}^s\rho_i^2\lambda_i
 \end{align*}
 as desired, where the final equality uses the two summation assumptions of the lemma.
\end{proof}

We can now prove Lemma~\ref{lem:ssrl}.

\begin{proof}[Proof of Lemma~\ref{lem:ssrl}]
Given $r\in\nats$, $\eps>0$ and $f\colon\nats\to\reals^+$, and $k_0\in\nats$, without loss of generality we assume $f$ is monotone decreasing. Let $t=16r\eps^{-3}$. We define $k_1,\dots,k_t$ as follows. For each $i\ge 1$, given $k_{i-1}$, we let $k_i$ be the $T$ obtained from Lemma~\ref{lem:srl} with input $r$, $\min\big(\tfrac12\eps,f(k_{i-1})\big)$ and $k_{i-1}$. We set $K=k_t$. We require $\eta>0$ to be smaller than all $\eta$ values given in these calls to Lemma~\ref{lem:srl}, and in addition $\eta\le\tfrac{1}{30rk_t}\eps^3$.

We begin by taking an arbitrary equipartition $\mathcal{P}_0$ of $[n]$ into $k_0$ parts, and for each $i=1,\dots,t-1$ in succession do the following. We let $\mathcal{P}'_i$ be obtained by applying Lemma~\ref{lem:srl}, with input $r$, $\tfrac12\min\big(\eps,f(k_{i-1})\big)$ and $k_{i-1}$, to the graphs $G_1,\dots,G_r$ with the partition $\mathcal{P}_{i-1}$. If $i\ge 2$, then by construction, $\mathcal{P}_{i-1}$ and~$\mathcal{P}'_i$ satisfy conditions~\ref{ssrl:ref}--\ref{ssrl:fine}. If they in addition satisfy~\ref{ssrl:dens} we have the desired partitions and stop. Otherwise (if $i=1$ or if~\ref{ssrl:dens} is not satisfied), we obtain $\cP_i$ from $\cP'_i$ by moving a minimum number of vertices from one part to another in order to obtain an equitable partition. Observe that since no two clusters of $\cP'_i$ differ in size by more than two, we change (by adding or removing) at most one vertex per cluster. We then increment $i$ and repeat this procedure.

We define the energy of a partition $\mathcal{P}$ to be
\[\mathcal{E}(\mathcal{P}):=\sum_{U\neq V\in\mathcal{P}}\frac{|U||V|\sum_{i\in[r]}d_p(U,V;G_i)^2}{n^2}\,.\]
By choice of $\eta$, provided all parts of $\mathcal{P}$ have at least $\eta n$ vertices, we have $\mathcal{E}(\mathcal{P})\le(1+\eta)^2r$, and trivially we have $\mathcal{E}(\cP_0)\ge0$.

Suppose that in the above procedure we do not stop at step $i\ge2$. Then the reason is that $\cP_{i-1}$ and $\cP'_i$ do not satisfy~\ref{ssrl:dens}. We use this to show $\mathcal{E}(\cP'_i)$ is significantly larger than $\mathcal{E}(\cP_{i-1})$. Specifically, consider some pair $(U,V)$ of parts of $\cP_{i-1}$. Suppose that $d_p(U,V;G_j)=d$. This pair is split into $s:=|\cP'_i|^2|\cP_{i-1}|^{-2}$ subpairs in $\cP'_i$, where we can write the $i$th density as $d+\rho_i$ for some $\rho_i$, and size (counting pairs of vertices in the subpair as a fraction of $|U||V|$) we can denote by $\lambda_i$. Then the contribution of $(U,V)$ to $\mathcal{E}(\cP_{i-1})$ is $d^2\tfrac{|U||V|}{n^2}$, and the total contribution of the subpairs of $(U,V)$ to $\mathcal{E}(\cP'_i)$ is, by Lemma~\ref{lem:DCS}, $\big(d^2+\sum_{i=1}^s\lambda_i\rho_i^2\big)\tfrac{|U||V|}{n^2}$. Since $\cP_{i-1}$ is equitable and $\cP'_i$ is an equitable refinement with at most $k_i$ parts, and since $n$ is sufficiently large, it follows that, writing $\rho_{U'V'}$ for the difference $d_p(U',V';G_j)-d_p(U,V;G_j)$ for each pair $(U',V')$ of $\cP'_i$ contained in the pair $(U,V)$ of $\cP_{i-1}$, and letting $\rho_{U'V'}=0$ otherwise, we have
\[\mathcal{E}(\cP'_i)-\mathcal{E}(\cP_{i-1})\ge \sum_{U'\neq V'\in \cP'_i}\tfrac{1}{2s^2}\rho_{U'V'}^2\cdot\tfrac{1}{2|\cP_{i-1}|^2}\,.\]
Because~\ref{ssrl:dens} fails, in some $G_j$ at least $\eps|\cP'_i|^2$ of the $\rho_{U'V'}$ values are at least $\eps$, so
\[\mathcal{E}(\cP'_i)-\mathcal{E}(\cP_{i-1})\ge \eps|\cP'_i|^2\tfrac{1}{2s^2}\eps^2\cdot\tfrac{1}{2|\cP_{i-1}|^2}=\tfrac14\eps^3\,.\]
Now, moving vertices to obtain an equipartition $\cP_i$ can change the $p$-densities in any $G_j$ between the (in the obvious way) corresponding pairs by at most \[\frac{2n|\cP_i|^{-1}+2}{pn^2|\cP_i|^{-2}}\le\tfrac{1}{10r}\eps^3\,,\]
and hence $\mathcal{E}(\cP'_i)-\mathcal{E}(\cP_i)\le\tfrac18\eps^3$, so $\mathcal{E}(\cP_i)-\mathcal{E}(\cP_{i-1})\ge\tfrac18\eps^3$. In particular, this implies that the procedure must stop, with the desired two partitions, before reaching $\cP_t$, which would otherwise have an energy exceeding $(1+\eta)^2r$.
\end{proof}

\medskip

We now prove Lemma~\ref{lem:counting}. The idea is as follows: we identify three special cases, Cases A, B and C. Case A will be the statement from Conlon, Gowers, Samotij and Schacht~\cite{CGSS}. Case B generalises this, and Case C further. We prove that Case A implies Case B, from this we prove Case C, and finally the lemma in full generality.

\begin{proof}[Proof of Lemma~\ref{lem:counting}]
 Note that the lemma is straightforward (it reduces to the usual counting lemma for dense graphs) if $p\ge\tfrac{1}{10}$, and so we in what follows assume $p<\tfrac{1}{10}$.

The first special case A we shall consider is that $r=v(H)$ and $\phi$ is a
bijection, $|V_1|=\dots=|V_{v(H)}|=n$ and $e(V_i,V_j)=m\ge dpn^2$ for each $ij$
such that $\phi^{-1}(i)\phi^{-1}(j)\in E(H)$. That the lemma is true in this
case is stated in~\cite[Section~5.2]{CGSS}.  Case B, which generalises Case A,
is that $r=v(H)$ and $\phi$ is a bijection, and $|V_1|=\dots=|V_{v(H)}|=n$. Case
C, which generalises this even further, is that $r=v(H)$ and $\phi$ is a
bijection.

We now prove Case B assuming Case A. We will use the obvious coupling of $G(N,p)$ as a subgraph of $G(N,2p)$. 
 We apply Case A with $\delta'=\tfrac14\delta$, $\tfrac{d}{4}$, and obtain $\eps'>0$. We set $\eps=\min\big(\tfrac12\eps'd^{e(H)},\tfrac{1}{100e(H)}\delta'\big)$. Given $\eta>0$, we choose $\iota=10^{-6}\eps^4\eta^3$, and let $C$ be returned by Case A with input $\delta',\tfrac{d}{4},\eta$ and sufficiently large for the following assumption to hold with high probability. Given $p\ge C N^{-1/m_2(H)}$, suppose that the good event of Case A holds for $\Gamma'=G(N,2p)$, and in addition that $e_{\Gamma'}(X,Y)=(2\pm \iota)p|X||Y|$ and $e_\Gamma(X,Y)=(1\pm \iota)p|X|Y|$, for any two disjoint vertex sets $X,Y\subset[n]$ with $|X|,|Y|\ge \iota N$, where $\Gamma$ is the coupled subgraph of $\Gamma'$ distributed as $G(N,p)$.
 
  Suppose that $G$ is any subgraph of $\Gamma$ satisfying the conditions of Case B, with the sets $V_{\phi(x)}$ for each $x\in V(H)$ given. We first create a supergraph $G^*$ of $G$ as follows: for each $xy\in E(H)$, we take all the edges of $\Gamma$ between each $V_{\phi(x)}$ and $V_{\phi(y)}$, and we add randomly selected edges of $\Gamma'\setminus\Gamma$ until we have exactly $m^*=\lceil\tfrac32pn^2|\rceil$ edges. Note that our assumptions on density in $\Gamma$ and $\Gamma'$ imply that we do need to add a positive number of edges to do this, and enough such edges exist in $\Gamma'$. Furthermore, with high probability each pair $(V_{\phi(x)},V_{\phi(y)})$ is $(\eps,\tfrac32,p)$-regular in $G^*$, and by the good event of Case A we conclude that the number of embeddings of $H$ into $G^*$ such that $x$ is mapped to $V_{\phi(x)}$ is at most $(2p)^{e(H)}n^{v(H)}$. 
 
 Let $G'$ be a random subgraph of $G$ obtained as follows. For each edge $xy$ of $H$, we select exactly $d'pn^2:=\lceil \tfrac{d}{4} p n^2\rceil$ edges between $V_{\phi(x)}$ and $V_{\phi(y)}$ of $G$ uniformly at random to $G'$. Observe that the probability that a given embedding of $H$ into $G$, which maps each $x\in V(H)$ to $V_{\phi(x)}$, is also an embedding into $G'$, is exactly
 \[\prod_{xy\in E(H)}\tfrac{d'pn^2}{e_G(V_{\phi(x)},V_{\phi(y)})}\,,\]
 and so the expected number of embeddings of $H$ into $G'$ which map each $x\in V(H)$ to $V_{\phi(x)}$ is by linearity of expectation the above fraction times the number of such embeddings into $G$.
 
 On the other hand, with probability at least $1-\iota$, for each $xy\in E(H)$ the pair $(V_{\phi(x)},V_{\phi(y)})$ is $(\eps',d',p)$-regular in $G'$, and when this event occurs, by Case A the actual number of such embeddings is $(1\pm\delta')(d'p)^{e(H)}n^{v(H)}$. When this event does not occur, the actual number of such embeddings is between $0$ and the total number into $G^*$, i.e.\ at most $(2p)^{e(H)}n^{v(H)}$. By choice of $\iota$, the expected number of such embeddings is $(1\pm 2\delta')(d'p)^{e(H)}n^{v(H)}$.
 
 Putting these two expectation calculations together, we conclude that the number of embeddings of $H$ into $G$ such that each $x$ is mapped to $V_{\phi(x)}$ is
 \[(1\pm 2\delta')(d'p)^{e(H)}n^{v(H)}\cdot \prod_{xy\in E(H)}\tfrac{e_G(V_{\phi(x)},V_{\phi(y)})}{d'pn^2}=(1\pm 2\delta')(1\pm\eps)^{e(H)}n^{v(H)}\prod_{xy\in E(H)}d_{xy}p\,,\]
 which is by choice of $\delta'$ and $\eps$ as desired. 
 
 Next, we use Case B to prove Case C. The idea is similar to the previous, but rather simpler; we skip the details. For our parameter choices for Case C, we take the same choices as for Case B, except that we return $\eps=\min\big((\eps')^2,\delta^2\big)$ when Case B gives $\eps'$. We assume the good event of Case B with these parameter choices holds for $\Gamma=G(N,p)$. Given $G$ in which we want to count embeddings of $H$, we obtain $G'$ by sampling randomly $V'_i$ to be exactly $n=\min_{i\in[r]}|V_i|$ vertices from each $V_i$. By definition of $(\eps,d,p)$-regularity, each pair $(V'_{\phi(x)},V'_{\phi(y)})$ is $(\eps',d,p)$-regular, and by Case B we obtain bounds on the count of $H$-embeddings in $G'$, which (since they hold with probability $1$) are also bounds on the expected count of $H$-embeddings in $G'$. On the other hand, the probability that any given $H$-embedding in $G$ survives in $G'$ is given by $\prod_{x\in V(H)}\tfrac{n}{|V_{\phi(x)}|}$, so by linearity of expectation we obtain the expected count of $H$-embeddings in $G'$ in terms of the number of $H$-embeddings in $G$. Putting these two calculations together the result follows.
  
 We now prove the full lemma statement from Case C. We apply Case C with $\delta'=\tfrac{\delta}{2}$, $\tfrac{d}{2}$ and obtain $\eps'>0$. We set $\eps=\tfrac{\eps'}{2v(H)}$. Given $\eta$ we set $\eta'=\tfrac{\eta}{2v(H)}$, and let $C$ be large enough for Case C with input $\delta'$, $\tfrac{d}{2}$ and $\eta'$. Given $p\ge C N^{-1/\mu_2(H)}$, we ask for the good event of Case C with the given parameter choices to hold for $\Gamma=G(N,p)$.
 
 The strategy is again rather similar to the previous, and we omit most of the details. We partition each set $V_i$ into $\phi^{-1}(i)$ equal parts (up to $\pm 1$), one for each vertex of $H$ mapped to $i$, and let $G'$ be the $v(H)$-partite graph, with parts $V'_i$ for $i\in [v(H)]$, with vertices of $H$ assigned one to a part, that we obtain in this way. Now Case C applies to $G'$, and we obtain bounds on the expected count of $H$-embeddings in $G'$.
 
 On the other hand, for any given $H$-embedding in $G$, the probability that the vertices of $H$ are embedded to the correct parts of $G'$ is given by the product over $i\in[r]$ of the probability of partitioning $V_i$ into $|\phi^{-1}(i)|$ ordered parts and finding that each part contains its required image vertex; this is easily seen to approach $\big(\tfrac{1}{|\phi^{-1}(i)|}\big)^{|\phi^{-1}(i)|}$ as $N$ tends to infinity. By linearity of expectation we obtain the expected count of $H$-embeddings in $G'$ in terms of the number of $H$-embeddings in $G$, and the desired result follows.

The argument above also gives the improved probability bound for $H$ being strictly $2$-balanced. We simply need to replace Case A as stated with~\cite[Theorem~1.6(ii)]{CGSS}, which requires $H$ to be strictly $2$-balanced but allows $p\ge Cn^{-1/m_2(H)}$. It is otherwise identical to Case A as stated.
\end{proof}

\end{document}